\newcommand{\nm}[1]{\|{#1}\|}
\newcommand{\ip}[2]{\langle#1,#2\rangle}
\newtheorem{thm}{Theorem}[section]
\newtheorem{lemma}{Lemma}[section]
\newtheorem{cor}{Corollary}[section]
\theoremstyle{remark}
\newtheorem{rem}{Remark}[section]
\theoremstyle{definition}
\newtheorem{deft}{Definition}[section]
\DeclareSymbolFont{AMSb}{U}{msb}{m}{n}
\DeclareMathSymbol{\N}{\mathbin}{AMSb}{"4E}
\DeclareMathSymbol{\Z}{\mathbin}{AMSb}{"5A}
\DeclareMathSymbol{\R}{\mathbin}{AMSb}{"52}
\DeclareMathSymbol{\Q}{\mathbin}{AMSb}{"51}
\DeclareMathSymbol{\I}{\mathbin}{AMSb}{"49}
\DeclareMathSymbol{\C}{\mathbin}{AMSb}{"43}
\begin{document}

\title{Orthogonal Polynomials on the Sierpinski Gasket}
\author{Kasso A.~Okoudjou}
\address{Kasso A.~Okoudjou\\
Department of Mathematics\\
University of Maryland\\
College Park, MD 20742 USA}
\email{kasso@math.umd.edu}

\author{Robert S.~Strichartz}
\address{Robert S.~Strichartz\\
Department of Mathematics\\
 Malott Hall\\
Cornell
\newline University, Ithaca, NY 14853-4201, USA}
\email{str@math.cornell.edu}

\author{Elizabeth K.~Tuley}
\address{Elizabeth K.~Tuley\\
Department of Mathematics\\
University of California at Los Angeles\\
Los Angeles, CA 90095 USA}
\email{ektuley@math.ucla.edu}

\subjclass[2000]{Primary 42C05, 28A80; Secondary 33F05, 33A99}

\date{\today}

\keywords{Jacobi matrix, Laplacian, Sierpinski Gasket, Orthogonal Polynomials, Recursion Relation}

\begin{abstract}
The construction of a Laplacian on a class of fractals which includes the Sierpinski gasket ({\bf $SG$}) has given rise to an intensive research on analysis on fractals. For instance, a complete theory of polynomials and power series on $SG$ has been developed by one of us and his coauthors. We build on this body of work to construct certain analogs of  classical orthogonal polynomials (OP) on $SG$.  In particular, we investigate key properties of these OP on $SG$, including a three-term recursion formula and the asymptotics of the coefficients appearing in this recursion. Moreover, we develop numerical tools that allow us to graph a number of these OP. Finally, we use these numerical tools to investigate the structure of the zero and the nodal  sets of these polynomials. 

\end{abstract}

 \pagestyle{myheadings} \thispagestyle{plain}
\markboth{K. A. OKOUDJOU, R. S. STRICHARTZ, AND E. K. TULEY}{ORTHOGONAL POLYNOMIALS ON THE SIERPINSKI GASKET}

\maketitle 

\tableofcontents

\setlength{\parskip}{1ex plus 0.5ex minus 0.3ex}

\section{Introduction}\label{intro}
The polynomials $\pi_{0}(t)=1$, and $\pi_k(t) = t^k + \sum_{l=0}^{k-1}a_{l}t^{l}, k=1,2,\ldots$ defined on the unit interval $I$ are called monic orthogonal polynomials with respect to a weight $w(t)$ if
\begin{equation}
\langle \pi_k, \pi_j \rangle = \int_I \pi_k(t) \pi_j(t) w(t) dt = \|\pi_{k}\|_{2}^{2} \delta_{k,j}\quad k=0, 1, 2, \hdots,
\end{equation}
where $\delta_{k,j}$ denotes the Kronecker $\delta$ sequence. 

When renormalized, these  monic orthogonal polynomials give rise to an orthonormal system of polynomials $\{\tilde{\pi}_{k}\}_{k=0}^{\infty}$, where $\langle \tilde{\pi_k}, \tilde{\pi_j} \rangle = \delta_{j,k}$

The monic OP are constructed by performing the Gram-Schmidt process on the monomials $\{t^k\}_{k=0}^{\infty}$, with respect to the inner product $\langle a, b \rangle = \int_I a(t) b(t) w(t) dt $. Examples of such polynomials include  the Legendre polynomials defined on $I=[-1,1]$, and using the weight function $w(t) = 1$, 

Numerical information about the OP can be obtained via a fundamental recursive equation called the three-term recursion formula.  
For example,  the  values of the polynomials at different points in $I$, and their zeros can be obtained from the three-term recurrence relation. For comparison with our results, we state the general form of this recurrence relation and  refer to \cite{gaut, szego75} for details on OP.  

{\bf Theorem A. \cite[Theorem 1.27]{gaut}} {\it
The orthogonal polynomials $\{\pi_k\}_{k\geq 0}$ satisfy a three-term recurrence relation. More specifically,  $\pi_{-1}(t) = 0, \pi_0(t) = 1,$  and for all $k\geq 0$ we have: 
\begin{equation}\label{classic3tr}
\pi_{k+1}(t) = (t-\alpha_k) \pi_k(t) - \beta_k \pi_{k-1}(t),
\end{equation}
where 
\begin{equation}
\left\{ \begin{array} {r@{\quad = \quad}c@{\quad \textrm{for}\quad}c@{\quad \geq \quad}c}
\alpha_k & \frac{\langle t\pi_k, \pi_k \rangle}{\langle \pi_k, \pi_k \rangle} &  k&0\\
\beta_k & \frac{\langle \pi_k, \pi_k \rangle}{\langle \pi_{k-1}, \pi_{k-1} \rangle}& k&1.\end{array}\right.
\end{equation}
}

Our goal in this paper is to construct and investigate the properties of the analogs of the above OP on $SG$. But first what is a polynomial on a fractal set such as $SG$?

Let $\Delta=\tfrac{d^{2}}{dx^{2}}$ denote the Laplacian on $I=[0,1].$
If we let  $\pi_j(t)= t^j$, for $t \in I=[0,1]$, it is immediate that
$\pi_j$ is a solution to a differential equation
$\Delta^{\ell}\pi_j(t)=0,$ for some $\ell\geq 1$. This observation can
be used to define polynomial on fractals in general and on the
Sierpinski gasket in particular.  In particular, on $SG$, one defines a \emph{polynomial} $P$ to be any solution of $\Delta^{j+1} P=0$ for some $j\geq 0$, where $\Delta$ is the (fractal) Laplacian to be defined below.  Using this analogy, monomials were introduced on the Sierpinski gasket  \cite{dsv, nsty1, bssy2, str06, strus} based on the fractal Laplacian constructed by Kigami \cite{kig}, see also \cite{str06, Str99}. We also note that a theory of polynomials based on a different Laplacian, the so-called energy Laplacian, is developed in \cite{stts10}.

Consequently, our construction of   OP on $SG$ is  based on these
polynomials. In particular, we apply the Gram-Schmidt
orthogonalization algorithm to produce the analogs to the Legendre
polynomials on $SG$.  In the process we construct both a family of
symmetric and antisymmetric OP on $SG$, and the concatenation of the
two families yields a tight frame for a proper closed subspace of
$L^{2}(SG)$. The reason that our construction does not yield a dense
set of OP in $L^{2}(SG)$ is due to the fact that the polynomials are
not dense in this space \cite[Theorem 4.3.6]{kig}, \cite[Section
5.1]{str06}. In addition, we investigate thoroughly a corresponding
three-term recursion formula, paying a particular attention to the
asymptotics of the coefficients involved in this recursion. Our
three-term recurrence is fundamentally different
from~\eqref{classic3tr}, and this is essentially due to the fact that
the product of two polynomials on $SG$ is not a polynomial
\cite{bbst}! In addition, our analog of~\eqref{classic3tr} is very
unstable and thus cannot be used to generate the values of the OP on
$SG$. However, we are able to exploit this three-term recurrence to
recursively express the corresponding OP in terms of a well-known
basis of polynomials on $SG$. This leads to the development of some
numerical tools that allow us to plot graphs of many of these OP. In
addition, we present preliminary results pertaining the zeros of these
OP. In particular, we graph the zero sets of certain of these
OP. These results are mainly experimental but seems to indicate that
these zero sets have some structure. We have not been able to prove
anything along these lines but hope that our experimental results lead
to more investigations on the zero sets of the OP.  Nonetheless, we
have generated many graphs related to the  OP on $SG$ that we did not include in the present paper due to space constraint. The interested reader is referred to the following web site which contains many figures and algorithms that were generated in the course of this research project \cite{webpage}.

 Our paper is organized as follows: In Section~\ref{analysis} we review the basic facts of analysis on fractals needed to state and prove our results. We also prove some new results about the Green's function that may be of independent interest.  Section~\ref{sec2} is devoted to the construction of the OP on $SG$ and to the investigation of the three-term recursion and the asymptotic analysis of its coefficients. We also consider the Jacobi matrix associated with these coefficients.  In Section~\ref{sec3} we carry out the numerical computations that enable us to plot many of the OP we constructed. In addition, we give some numerical evidence concerning the asymptotics of the coefficients involved in the three-term relation. Finally, we give numerical description of the  zero sets of the OP and display some of these zero sets as well as some of their corresponding nodal sets on $SG$.

\section{Polynomials on $SG$}\label{analysis}
\subsection{Preliminaries}

\begin{figure}[h]
\begin{center}
\includegraphics[scale=.20]{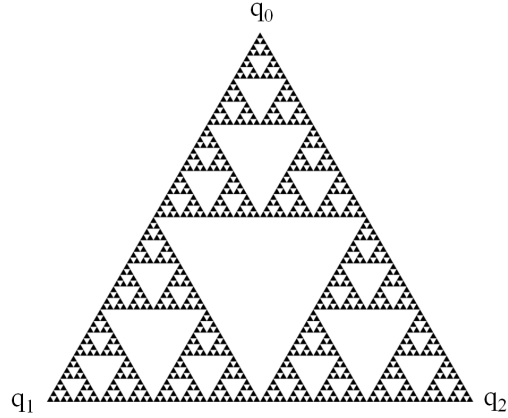}
\caption{The Sierpinski Gasket}
\label{fig:SG}
\end{center}
\end{figure}

For more information on the theory of calculus on fractals see \cite{kig}; more specifically for calculus on the Sierpinski Gasket see \cite{str06, Str99}. However, we collect below some key facts needed in the formulation of our results. 

The Sierpinski Gasket ($SG$), shown in Figure \ref{fig:SG}, is the attractor of iterated function system (IFS) consisting of three contractions in the plane $F_i: \R^2 \to \R^2$, $i=0,1,2$, defined by $F_i = \frac{1}{2}(x-q_i)+q_i$ where $\{q_i\}_{i=0}^2$ are the vertices of an equilateral triangle. $SG$ satisfies the self-similar identity $SG = \cup_{i=0}^2 F_i(SG)$, and the $F_i(SG)$ are called \emph{cells of level 1}.  By iteration we can write $F_w = F_{w_1} \circ F_{w_2} \circ \ldots \circ F_{w_m}$ for $w = (w_1,w_2,\ldots,w_m)$, each $w_j = 0,1,2$.  We call each $w$ a word of length $|w| = m$, and we have the self-similar identity $SG = \cup_{|w|=m}F_w(SG)$.  Each $F_w(SG)$ is called a \emph{cell of level $m$}.  

$SG$ can also be viewed as the limit of a sequence of graphs $\Gamma_m$ (with vertices $V_m$ and edge relation $x \sim_m y$) defined inductively as follows: $\Gamma_0$ is the complete graph on $V_0 = \{q_0,q_1,q_2\}$, and $V_m = \cup_{i=0}^2 F_i V_{m-1}$ with $x \sim_m y$ if and only if x and y belong to the same cell of level $m$. Then $V_* = \cup_{m=1}^\infty V_m$, the set of all \emph{vertices} is the analog of the dyadic points in $[0, 1]$ and is dense in $SG$. We consider $V_0$ the set of boundary points of $SG$, and $V_* \setminus V_0$ is the set of \emph{junction points}. Note that every junction point in $V_m$ has four neighbors in the graph $\Gamma_m$.  

The graph Laplacian $\Delta_m$ is defined by 
\begin{equation}
\Delta_m u(x) = \sum_{x \sim_m y} (u(y) - u(x)) \mbox{\hspace{.2in} for } x\in V_m \setminus V_0
\end{equation}
and the Laplacian $\Delta$ on $SG$ is defined as the renormalized limit
\begin{equation}
\Delta u(x) = \lim_{m \to \infty} \frac{3}{2} 5^m \Delta_m u(x).
\end{equation}

At the three boundary points of $SG$, we have two different derivatives, the normal and tangential derivative, respectively defined by 
\begin{equation}\label{partderi}
\left\{ \begin{array} {r@{\quad = \quad}l}
\partial_n u(q_i) & \lim_{m \to \infty} \left( \frac{5}{3} \right)^m (2u(q_i) -u(F^m_i q_{i+1}) -u(F^m_i q_{i-1}))\\
\partial_T u(q_i) & \lim_{m \to \infty} 5^m(u(F^m_0 q_{i+1}) - u(F^m_0 q_{i-1})),\end{array}\right.
\end{equation} identifying the boundary points $q_i$ and $q_j$ when
$i=j \mod 3$.  

Using the above definitions, the analog of the Gauss-Green formula takes the following form: 
\begin{equation}
\int_{SG} (u\Delta v - v\Delta u)d\mu = \sum_{i=0}^2 ( u(q_i)\partial_n v(q_i) - v(q_i)\partial_n u(q_i)),
\end{equation}
where  $\mu$ is the natural probability measure which assigns weight $3^{-m}$ to each cell of order $m$.

These analytical tools can now be used to solve the boundary value problem  $$-\Delta u = f, \qquad u|_{V_0} = 0, $$  whose solution is given by  
\[u = \int_{SG} G(x,y)f(y)\,d\mu(y). \]
 $G$ is called the Green's function, and is  given by 
\[G(x,y) = \lim_{M \to \infty} G_M(x,y) \mbox{, where } G_M(x,y) = \sum_{m=0}^M \sum_{z'\in V_{m+1}\setminus V_m} g(z,z') \psi_z^{(m+1)}(x) \psi_{z'}^{(m+1)}(y), \]
where $g(z,z')$ is zero when $z$ and $z'$ are not in the same cell of level $m+1$, 
\[g(z,z) = \frac{9}{50} \left(\frac{3}{5}\right)^m \mbox{\hspace{.2in} for } z\in V_{m+1}\setminus V_m\]
\[\mbox{\hspace{.1in} and}\, g(z,z') = \frac{3}{50} \left(\frac{3}{5}\right)^m \mbox{\hspace{.2in} for } z\neq z',\mbox{\hspace{.05in} $z$ and $z'$ in the same cell of level } m+1,\]
and $\psi_z^{(m+1)}$ is the piecewise harmonic function satisfying $\psi_{z}^{(m+1)}(x)=\delta_{zx}$ and vanishing outside the $(m+1)$th cell to which $z$ belongs.

We shall need an exact estimate of $\nm{G}_{L^{2}}$, the $L^2$ norm of the Green function $G$. It was conjectured in \cite{kss00} and proved in \cite{wat02} that $$\nm{G}_{L^{\infty}}=178839/902500\simeq .198.$$ This immediately implies that $\nm{G}_{L^{2}} < \nm{G}_{L^{\infty}} <1$. However, we obtain  exact values for $\nm{G}_{L^{2}}$ and a related quantity in the next result. Though the result seems simple, to our knowledge, it has not appeared anywhere in the literature. 

First, we briefly recall a description of the spectrum of $\Delta$ on $SG$,  that  was given in~\cite{FuSh92} using the
method of spectral decimation introduced in~\cite{RaTo83}. In essence, the spectral
decimation method completely determines the eigenvalues and the eigenfunctions of $\Delta$ on $SG$
from the eigenvalues and eigenfunctions of the graph Laplacians $\Delta_m$. More specifically, for
every Dirichlet $\lambda_{j}$ of $-\Delta$ on $SG$, that is a solution of $$-\Delta u = \lambda_{j} u, \quad u_{{|V_{0}}}=0,$$ there exists an integer $m\geq 1$,
called the {\em generation of birth}, such that if $u$ is a $\lambda_j$-eigenfunction and $k\geq m$
then $u_{|V_{k}}$ is an eigenfunction of $\Delta_{k}$ with eigenvalue $\lambda_{j}^{(k)}$.  The only
possible initial values $\lambda_{j}^{(m)}$ are $2$, $5$ and $6$, and subsequent values can be obtained
from
\begin{equation}\label{spectdecrelnforevals}
    \lambda_{j}^{(k+1)} = \tfrac{ 5+ \epsilon_{k} \sqrt{25-4\lambda_{j}^{(k)}}}{2}\text{ for }k\geq m
    \end{equation}
where $\epsilon_{k}$ can take the values $\pm1$.  The sequence $\lambda_{j}^{(k)}$ is related to $\lambda_{j}$
by
\begin{equation}\label{spectdecrelnofgammaktolambda}
    \lambda_{j} = \tfrac{3}{2}\lim_{k\to \infty} 5^{k}\lambda_{j}^{(k)}.
    \end{equation}

\begin{thm}\label{nmgreen}
Let $\{\lambda_{j}\}_{j=0}^{\infty}$ be the eigenvalues of the Laplacian on $SG$. Then,
$$\nm{G}_{L^{2}}^{2}=\iint_{SG\times SG} G(x, y)^{2} dxdy=\sum_{j=0}^{\infty}\tfrac{1}{\lambda_{j}^{2}}= \tfrac{45389}{3564000}\simeq .0127$$ and $$  \int_{SG} G(x, x) dx=\sum_{j=0}^{\infty}\tfrac{1}{\lambda_{j}}= 1/6$$

\end{thm}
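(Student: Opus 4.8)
The plan is to separate the statement into two nearly independent parts: first, to identify $\int_{SG}G(x,x)\,d\mu(x)$ and $\iint G(x,y)^{2}\,d\mu(x)d\mu(y)$ with the spectral sums $\sum_{j}\lambda_{j}^{-1}$ and $\sum_{j}\lambda_{j}^{-2}$; second, to evaluate these two numbers by feeding the explicit series for $G_{M}$ into the integrals and passing to the limit. For the first part, note that the operator $u\mapsto\int_{SG}G(\cdot,y)u(y)\,d\mu(y)$ is by construction the inverse of $-\Delta$ with Dirichlet conditions on $V_{0}$; it is symmetric since $G(x,y)=G(y,x)$, Hilbert--Schmidt (hence compact) since $G$ is bounded and $\mu$ is finite, and nonnegative since for $f\in L^{2}(SG)$ and $u=(-\Delta)^{-1}f$ integration by parts with $u|_{V_{0}}=0$ gives $\iint_{SG\times SG}G(x,y)f(x)f(y)\,d\mu(x)d\mu(y)=-\int_{SG}u\Delta u\,d\mu=\mathcal{E}(u,u)\ge 0$. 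Its eigenvalues are thus the $\lambda_{j}^{-1}$, with eigenfunctions the $L^{2}$-normalized Dirichlet eigenfunctions $\varphi_{j}$ of $-\Delta$. Since $G$ is continuous on $SG\times SG$ (being the uniform limit of the continuous $G_{M}$), Mercer's theorem gives $G(x,y)=\sum_{j}\lambda_{j}^{-1}\varphi_{j}(x)\varphi_{j}(y)$ with absolute uniform convergence; integrating on the diagonal yields $\int_{SG}G(x,x)\,d\mu=\sum_{j}\lambda_{j}^{-1}$, and expanding $G$ in the orthonormal basis $\{\varphi_{j}\otimes\varphi_{k}\}$ of $L^{2}(SG\times SG)$ yields $\|G\|_{L^{2}}^{2}=\sum_{j}\lambda_{j}^{-2}$. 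Both series converge a priori: the second equals $\|G\|_{L^{2}}^{2}<\infty$, and the first is bounded by $\|G\|_{L^{\infty}}<1$.

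To evaluate $\int_{SG}G(x,x)\,d\mu$ I would substitute the formula for $G_{M}$, getting $\sum_{m=0}^{M}\sum_{z,z'}g(z,z')\int_{SG}\psi_{z}^{(m+1)}\psi_{z'}^{(m+1)}\,d\mu$, and then use two facts. First, a counting fact: $V_{m+1}\setminus V_{m}$ has $3^{m+1}$ elements and there are $3^{m+1}$ cells of level $m+1$, each new vertex lying in exactly two of them and each such cell containing exactly two new vertices; so the only nonzero terms come from $z=z'$ and from $z\ne z'$ in a common level-$(m+1)$ cell. Second, the harmonic moments: if $h_{0},h_{1},h_{2}$ are the harmonic functions with boundary values $\delta_{ij}$ on $V_{0}$, the self-similar identity $\int_{SG}h_{i}h_{j}\,d\mu=\tfrac13\sum_{k}\int_{SG}(h_{i}\circ F_{k})(h_{j}\circ F_{k})\,d\mu$ together with $h_{0}+h_{1}+h_{2}\equiv1$ gives a linear system with solution $\int_{SG}h_{i}^{2}\,d\mu=\tfrac{7}{45}$ and $\int_{SG}h_{i}h_{j}\,d\mu=\tfrac{4}{45}$ for $i\ne j$. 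Hence $\int_{SG}(\psi_{z}^{(m+1)})^{2}\,d\mu=2\cdot3^{-(m+1)}\cdot\tfrac{7}{45}$ and $\int_{SG}\psi_{z}^{(m+1)}\psi_{z'}^{(m+1)}\,d\mu=3^{-(m+1)}\cdot\tfrac{4}{45}$ for $z\ne z'$ in a common cell; combined with $g(z,z)=\tfrac{9}{50}(3/5)^{m}$ and $g(z,z')=\tfrac{3}{50}(3/5)^{m}$, the $m$-th layer contributes $\tfrac{1}{15}(3/5)^{m}$, so the total is $\tfrac{1}{15}\cdot\tfrac{1}{1-3/5}=\tfrac16$.

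The computation of $\|G\|_{L^{2}}^{2}=\lim_{M}\iint G_{M}^{2}\,d\mu\,d\mu$ runs along the same lines, but squaring $G_{M}$ couples different levels: $\iint G_{M}^{2}=\sum_{m,n}\sum g(z,z')g(w,w')\bigl(\int\psi_{z}^{(m+1)}\psi_{w}^{(n+1)}\bigr)\bigl(\int\psi_{z'}^{(m+1)}\psi_{w'}^{(n+1)}\bigr)$. The structural point is that for $m<n$ the support of $\psi_{w}^{(n+1)}$ (two adjacent cells of level $n+1$) lies inside a single cell of level $m+1$, on which $\psi_{z}^{(m+1)}$ equals one of the basic harmonic functions composed with the corresponding cell map; rescaling, $\int_{SG}\psi_{z}^{(m+1)}\psi_{w}^{(n+1)}\,d\mu$ becomes $3^{-(m+1)}$ times one of a finite family of integrals $\int_{SG}h_{c}\psi_{w'}^{(k)}\,d\mu$ with $k=n-m\ge1$, and these reduce, again by self-similarity, to the integrals $\int_{SG}(h_{c}\circ F_{v})h_{d}\,d\mu$ with $|v|=k$, which satisfy a finite-dimensional linear recursion in $k$ governed by the harmonic extension matrices (whose eigenvalues are $1,\tfrac35,\tfrac15$). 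Summing over $k$ gives geometric series with ratios among $\{3/5,1/5,9/25,3/25\}$; assembling the $m,n$ sums and letting $M\to\infty$ (dominated convergence, using that $\|G_{M}\|_{L^{\infty}}$ is bounded) produces the stated rational number, the factor $11$ in $3564000=2^{5}\cdot3^{4}\cdot5^{3}\cdot11$ entering through a series such as $\sum_{k}(3/25)^{k}=\tfrac{25}{22}$.

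The first part and the $1/6$ computation are routine once Mercer's theorem is invoked. The real work, and the main obstacle to a short proof, is the $\|G\|_{L^{2}}^{2}$ computation: one must organize the double sum over levels in $\iint G_{M}^{2}$ correctly, determine exactly which pairs $(\psi_{z}^{(m+1)},\psi_{w}^{(n+1)})$ have overlapping supports and with what multiplicity, and verify that the recursion for $\int_{SG}h_{c}\psi_{w'}^{(k)}\,d\mu$ closes on a finite-dimensional space so that the whole expression can be summed in closed form. It is this bookkeeping, rather than any single hard idea, that accounts for the intricacy of the final answer.
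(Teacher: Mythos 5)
Your identification of the two integrals with the spectral sums $\sum_j\lambda_j^{-1}$ and $\sum_j\lambda_j^{-2}$ (via positivity and compactness of the Dirichlet inverse, Mercer's theorem, and the Hilbert--Schmidt expansion) is correct and is essentially the paper's starting point as well: the authors quote the expansion $G(x,y)=\sum_k\lambda_k^{-1}u_k(x)u_k(y)$ and reduce everything to the two spectral sums. Where you diverge is in the evaluation. The paper never returns to the spline series for $G_M$; instead it evaluates both sums by spectral decimation, using that the two children $\lambda_{j,\pm}^{(m+1)}$ of a graph eigenvalue satisfy $\lambda_++\lambda_-=5$ and $\lambda_+\lambda_-=\lambda_j^{(m)}$, so that $\tfrac{1}{5^{m+1}\lambda_+}+\tfrac{1}{5^{m+1}\lambda_-}=\tfrac{1}{5^m\lambda_j^{(m)}}$ exactly; tracking the multiplicities of the eigenvalues born at each level then turns both sums into explicit scalar recursions ($A_m$ for the first sum, and a coupled pair $B_m,C_m$ for the second) whose limits are elementary geometric series. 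Your alternative evaluation of $\int_{SG}G(x,x)\,d\mu$ by direct integration of the $G_M$ series is correct and complete: I checked that the harmonic moments are $\int h_i^2\,d\mu=\tfrac{7}{45}$, $\int h_ih_j\,d\mu=\tfrac{4}{45}$, and that the level-$m$ layer contributes exactly $\tfrac{1}{15}(3/5)^m$, summing to $\tfrac16$. This is arguably a cleaner derivation of that identity than the paper's.

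The genuine gap is the second, harder identity $\|G\|_{L^2}^2=\tfrac{45389}{3564000}$, which is the real content of the theorem: you describe a plan (organize the quadruple sum over $(z,z',w,w')$, reduce cross-level overlaps to $\int(h_c\circ F_v)h_d\,d\mu$ governed by the harmonic extension matrices, sum the resulting geometric series) but you never execute it, and the claim that this bookkeeping ``produces the stated rational number'' is asserted rather than derived. Nothing in what you wrote would let a reader check that the multiplicities and overlap cases have been enumerated correctly or that the answer is $\tfrac{45389}{3564000}$ rather than some other rational with the same denominator structure; you yourself flag this as ``the real work.'' The plan is structurally sound (the support of $\psi_w^{(n+1)}$ does lie in a single level-$(m+1)$ cell for $n>m$, so the reduction closes on a finite-dimensional space), but as a proof of the stated equality it is incomplete precisely where the difficulty lies. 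This is where the paper's spectral-decimation route pays off: the identity $(\tfrac{1}{5^{m+1}\lambda_+})^2+(\tfrac{1}{5^{m+1}\lambda_-})^2=(\tfrac{1}{5^m\lambda_j^{(m)}})^2-\tfrac{2}{25}\cdot\tfrac{1}{5^{2m}\lambda_j^{(m)}}$ reduces the second sum to a two-term scalar recursion with explicit initial data, avoiding the tensor bookkeeping entirely. To complete your argument you would either have to carry out the quadruple-sum computation in full, or switch to the paper's eigenvalue recursion for this half.
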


\begin{proof}
It is easily seen that  $G$ can be expressed as $$G(x,y)=\sum_{k=0}^{\infty}\lambda_{k}^{-1}u_{k}(x)u_{k}(y),$$ where $\lambda_{k}$ are the Laplacian eigenvalues corresponding to the eigenfunction $u_k$ \cite{str06}. 

It then follows that  $$\int_{SG} G(x, x) dx=\sum_{j}\tfrac{1}{\lambda_{j}}=\lim_{m \to \infty}\tfrac{2}{3}\sum_{j \in I_{m}}\tfrac{1}{5^{m}\lambda_{j}^{(m)}},$$ where $I_m$ is a finite set that depends on the multiplicity of the graph-Laplacian eigenvalues $\lambda_{j}^{(m)}$. We shall now find explicit formula for the above sum. 

Suppose that we know the graph-Laplacian eigenvalues  $\{\lambda_{j}^{(m)}\}$ whose multiplicity are $\{d_{j}^{(m)}\}$: if $\lambda_{j}^{(m)}=6$ it has multiplicity $\tfrac{3^{m}-3}{2}$, if $\lambda_{j}^{(m)}=5$ it has multiplicity $\tfrac{3^{m-1}+3}{2}$ and if $\lambda_{j}^{(m)}=3$ it has  multiplicity $\tfrac{3^{m-1}-3}{2}$.
Then, at the next level ($m+1$), the graph-Laplacian eigenvalues $\{\lambda_{j}^{(m+1)}\}$  come from the spectral decimation method. Moreover, for every $\lambda_{j}^{(m)}\neq 6$ with multiplicity $d_{j}^{(m)}$ there will be two eigenvalues $\lambda_{j, \pm}^{(m+1)}$ with multiplicity $d_{j}^{(m)}$ and given by $$\lambda_{j, \pm}^{(m+1)}=\tfrac{5\pm \sqrt{25 - 4\lambda_{j}^{(m)}}}{2}.$$ Note that 
$$\tfrac{1}{5^{m+1}\lambda_{j, +}^{(m+1)}}+ \tfrac{1}{5^{m+1}\lambda_{j, -}^{(m+1)}}=\tfrac{1}{5^{m}}\tfrac{1}{5}\tfrac{\lambda_{j, +}^{(m+1)} + \lambda_{j, -}^{(m+1)}}{\lambda_{j, +}^{(m+1)}\lambda_{j, -}^{(m+1)}}=\tfrac{1}{5^{m}\lambda_{j}^{(m)}}.$$ Now  $\sum_{j}\tfrac{1}{\lambda_{j}}=\lim_{m \to \infty}A_{m}$ where $A_{m}=\sum_{\lambda_{j}\neq 6}\tfrac{1}{5^{m}\lambda_{j}^{(m)}}$, since the eigenvalue $\lambda_{j}^{(m)}=6$ with multiplicity $\tfrac{3^{m}-3}{2}$ contributes $O(\tfrac{3^{m}}{5^{m}})$ to the sum. Therefore, by the spectral decimation, we have
\begin{equation}\label{Am}
A_{m+1}=\tfrac{2}{3}\tfrac{3^{m}+3}{2}\tfrac{1}{5^{m+1} 5}+ \tfrac{2}{3}\tfrac{3^{m-1}-3}{2}\tfrac{1}{5^{m+1} 3} +A_m,
\end{equation}  
with $A_1= \tfrac{2}{5}\tfrac{1}{5}(1/2 + 1/5 + 1/5) =3/25.$ Consequently,
$$\sum_{j}\tfrac{1}{\lambda_{j}}=A_1+ \sum_{m=1}^{\infty}\tfrac{3^{m-1}+1}{5^{m+2}} + \tfrac{1}{3}\sum_{m=1}^{\infty}\tfrac{3^{m-1}-1}{5^{m+1}}=1/6.$$

Similarly, if we let $B_m=\tfrac{4}{9}\sum_{\lambda\neq 6}(\tfrac{1}{5^{m}\lambda_{j}^{(m)}})^{2},$, then $$\sum_{j}^{\infty} \tfrac{1}{\lambda_{j}^{2}}=\lim_{m\to \infty}B_{m}.$$ 
One can check that $$(\tfrac{1}{5^{m+1}\lambda_{j, +}^{(m+1)}})^2+ (\tfrac{1}{5^{m+1}\lambda_{j, -}^{(m+1)}})^2=\tfrac{1}{25}\tfrac{1}{5^{2m}}\tfrac{(\lambda_{j, +}^{(m+1)})^{2} + (\lambda_{j, -}^{(m+1)})^{2}}{(\lambda_{j, +}^{(m+1)}\lambda_{j, -}^{(m+1)})^{2}}=\tfrac{1}{25}\tfrac{25-2\lambda_{j}^{(m)}}{(5^{m}\lambda_{j}^{(m)})^{2}}=(\tfrac{1}{5^{m}\lambda_{j}^{(m)}})^{2}-\tfrac{2}{25}\cdot \tfrac{1}{5^{2m}\lambda_{j}^{(m)}}.$$ Next, we can use the spectral decimation method to prove that 
\begin{equation}\label{Bm}
B_{m+1}=\tfrac{4}{9}\tfrac{3^{m}+3}{2}\tfrac{1}{(5^{m+1}\cdot 5)^{2}} + \tfrac{4}{9}\tfrac{3^{m}-3}{2}\tfrac{1}{(5^{m+1}\cdot 3)^{2}} + B_m - \tfrac{2}{25}C_m.
\end{equation}
where $C_m= \sum_{\lambda_{j}\neq 6}\tfrac{1}{5^{2m}\lambda_{j}^{(m)}}=\tfrac{3}{2}\tfrac{1}{5^{m}}A_{m}.$ Moreover,

\begin{equation}\label{Cm}
C_{m+1}=\tfrac{3^{m}+3}{2}\tfrac{1}{5^{2m+2}\cdot 5} + \tfrac{3^{m}-3}{2}\tfrac{1}{5^{2m+2}\cdot 3} + \tfrac{1}{5}C_m,
\end{equation}

with $C_1=\tfrac{1}{25}(1/2+1/5+1/5)=9/250,$ and $B_1=\tfrac{4}{9}\tfrac{1}{25}(1/4+1/25+1/25)=\tfrac{11}{3\cdot (25)^2}.$

Consequently,
$$\sum_{j}\tfrac{1}{\lambda_{j}^{2}}=B_1 + \sum_{m=1}^{\infty}\tfrac{2}{3}\tfrac{3^{m-1}+3}{5^{3}\cdot 5^{2m}}+\sum_{m=1}^{\infty}\tfrac{2}{3}\tfrac{3^{m-1}-3}{15^{2}\cdot 5^{2m}}-\tfrac{2}{25}\sum_{m=1}^{\infty}C_m=\tfrac{1}{25}\big(\tfrac{3532}{25^{2}\cdot 81\cdot 11}-\tfrac{167}{1760}\big)=\tfrac{45389}{3564000}\simeq .0127.$$
\end{proof}

\subsection{Bases of polynomials on $SG$}

For any integer $j\geq 0$, the set of polynomials of degree less than or equal to $j$ will be denoted  $\mathcal{H}^j$ and consists of the  solutions of $\Delta^{j+1} u =0$.  $\mathcal{H}^j$ is a space of dimension $3j+3$ and it has a basis $\{f_{ki}, 0\leq k\leq j; i=0,1,2\}$ characterized by
\begin{equation}
\Delta^\ell f_{ki}(q_{i'}) = \delta_{\ell,k}\delta_{i,i'},
\end{equation}
where $i'=0,1,2$ and $0\leq \ell \leq j.$
In particular, $\mathcal{H}^0$ is the space of \emph{harmonic functions}, i.e., a function $h:SG \to \R$ belongs to $\mathcal{H}^{0}$ if and only if it satisfies $\Delta h = 0$.  We refer to \cite{strus, nsty1, str06} for details on polynomials on $SG$. 

Our construction of OP on $SG$ will be based on another basis for the space  $\mathcal{H}^j$ introduced in \cite{nsty1}. Functions in this basis are called {\it monomials} and  are essentially the fractal analogs of $\tfrac{x^{j}}{j!}$.

\begin{deft}
Fix a boundary point $q_n$ for $n=0,1,2$,  The monomials $P_{ji}^{(n)}$ for $i=1,2,3$ and $j\ge0$ are defined to be the functions in $\mathcal{H}^j$ satisfying

\begin{equation}\label{defmono}
\left\{ \begin{array} {r@{\quad = \quad}l}
\Delta^m P_{ji}^{(n)}(q_n) & \delta_{m,j}\delta_{i,1}\\
\partial_n \Delta^m P_{ji}^{(n)}(q_n) & \delta_{m,j}\delta_{i,2}\\
\partial_T \Delta^m P_{ji}^{(n)}(q_n) & \delta_{m,j}\delta_{i,3},\end{array}\right.
\end{equation} with $0\leq m \leq j.$
When $n=0$ we will sometimes delete the upper index and just write $P_{ji}$ for $P_{ji}^{(0)}$.
\end{deft}

Notice that for a fixed $n=0,1, 2$, the set of monomials $\{P_{ki}, i=1, 2, 3\}_{k=0}^{j}$ forms a basis for $\mathcal{H}^{j}$. Moreover, the monomials in this basis satisfy $\Delta P_{ji}^{(n)} = P_{(j-1)i}^{(n)}$, and possess some symmetry properties. When $i=1$ or $2$, $P_{ji}^{(n)}$ is symmetric with respect to the line passing through $q_n$ and the midpoint of the side opposing $q_n$. In fact, these symmetric polynomials can be viewed as analogs of the even polynomials $t^{2k}$ on the unit interval. When $i=3$, $P_{j3}^{(n)}$ is antisymmetric with respect to the line passing through $q_n$ and the midpoint of the side opposing $q_n$.  In this case, $P_{j3}^{(n)}$ can be viewed as analogs of the odd polynomials $t^{2k+1}$.

In our construction of OP on $SG$, we will need the following result which was proved in \cite{nsty1} and which gives recursively values of the above monomials and their derivatives at boundary points.

{\bf Theorem B \cite[Theorem 2.3]{nsty1}} {\it 
For $j\ge0$, let
\begin{equation*}
\alpha_j = P_{j1}(q_1), \hspace{.1in}
\beta_j = P_{j2}(q_1), \hspace{.1in}
\gamma_j = P_{j3}(q_1), \hspace{.1in}
\eta_j = \partial_n P_{j1}(q_1)
\end{equation*}

The following recursion relations hold: 
\begin{equation}\label{recursion}
\left\{ \begin{array} {r@{\quad = \quad}c@{\quad \textrm{for}\quad}c@{\quad \geq \quad}c}
\alpha_j & \frac{4}{5^j - 5} \sum_{\ell=1}^{j-1} \alpha_{j-\ell}\alpha_{\ell} &  j&2\\ [.3ex]
\beta_j & \frac{2}{15(5^j -1)} \sum_{\ell=0}^{j-1} (3\cdot 5^{j-\ell}-5^{\ell+1}+6)\alpha_{j-\ell}\beta_{\ell}  &j& 1\\ [.3ex]
\gamma_j & 3\alpha_{j+1}&   j&1\\ [.3ex]
\eta_j & \frac{5^j +1}{2} \alpha_j +2\sum_{\ell=0}^{j-1} \eta_{\ell}\beta_{j-\ell}   & j&1, \end{array}\right.
\end{equation}
where the initial values are: $\alpha_0 = 1, \alpha_1 = 1/6, \beta_0 = -1/2, \eta_0 = 0$, and $\partial_n P_{02}(q_1) = -1/2$. 
Moreover, $$\partial_n P_{j2}(q_1)= -\alpha_j\quad {\textrm for}\quad j\geq 1\qquad {\textrm and}\quad \partial_n P_{j3}(q_1)= 3\eta_{j+1}\quad {\textrm for}\quad j\geq  0.$$}
We can use Theorem B along with the Green-Gauss formula to compute the inner product among the monomials $P^{(n)}_{kj}$.

\begin{lemma}\label{lemma1}
Consider the monomials $\{P_{ji}\}_{j\geq 0}$  for $i=1, 2, 3.$ Then the following formulas hold: 
\begin{equation}\label{piinnerprod}
\left\{ \begin{array} {r@{\quad = \quad}l}
\ip{P_{j1}}{P_{k1}} & 2 \sum_{\ell=j-m_*}^j \alpha_{j-\ell}\eta_{k+\ell+1} - \alpha_{k+\ell+1}\eta_{j-\ell}\\ [.2ex]
\ip{P_{j2}}{P_{k2}} & -2 \sum_{\ell=j-m_*}^j \beta_{j-\ell}\alpha_{k+\ell+1} - \beta_{k+\ell+1}\alpha_{j-\ell}\\ [.2ex]
\ip{P_{j3}}{P_{k3}} & 18 \sum_{\ell=j-m_*}^j \alpha_{j-\ell+1}\eta_{k+\ell+2} - \alpha_{k+\ell+2}\eta_{j-\ell+1}\\ [.2ex]
\ip{P_{j1}}{P_{k2}} & -2 \sum_{\ell=0}^j \alpha'_{j-\ell}\alpha'_{k+\ell+1} - \beta_{k+\ell+2}\eta_{j-\ell+1},\end{array}\right.
\end{equation} where $\alpha'_{k}=\alpha_k$ for $k\neq 1$ and $\alpha'_1 = -1/2$, and where $m_* = \min{(j,k)}$.

Moreover, 
\begin{equation}\label{antssinnerprod}
\ip{P_{j1}}{P_{k3}} = \ip{P_{j2}}{P_{k3}} = 0.
\end{equation}
In addition, for $i=3$ the inner products among the different monomials associated with each of the boundary points $q_{n}$, $n=0, 1, 2$ are given by
\begin{equation}\label{difinnerprodp3}
\left\{ \begin{array} {r@{\quad = \quad}l}
\ip{P_{j3}^{(n)}}{P_{k3}^{(n)}} & \ip{P_{j3}^{(0)}}{P_{k3}^{(0)}}\\[.3ex]
\ip{P_{j3}^{(n)}}{P_{k3}^{(n')}} & -\frac{1}{2}\ip{P_{j3}^{(0)}}{P_{k3}^{(0)}}  \end{array}\right.
\end{equation} for $n \neq n'$.
\end{lemma}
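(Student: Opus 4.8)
The plan is to turn every inner product into a finite sum of boundary data by iterating the Gauss--Green formula, and then to evaluate that boundary data using the defining conditions of the monomials together with Theorem~B. Iterating the identity $\int_{SG}(u\Delta v-v\Delta u)\,d\mu=\sum_i(u(q_i)\partial_n v(q_i)-v(q_i)\partial_n u(q_i))$ a total of $N$ times gives, for polynomials $u,v$,
\[
\int_{SG}\bigl(u\,\Delta^{N}v-\Delta^{N}u\,v\bigr)\,d\mu
=\sum_{m=0}^{N-1}\sum_{i=0}^{2}\bigl(\Delta^{m}u(q_i)\,\partial_n\Delta^{N-1-m}v(q_i)-\partial_n\Delta^{m}u(q_i)\,\Delta^{N-1-m}v(q_i)\bigr).
\]
For $\langle P_{j1},P_{k1}\rangle$ I would use $\Delta P_{\ell1}=P_{(\ell-1)1}$ to write $P_{k1}=\Delta^{j+1}P_{(k+j+1)1}$ and apply this with $u=P_{j1}$, $v=P_{(k+j+1)1}$, $N=j+1$; since $P_{j1}$ has degree $j$ the term $\Delta^{j+1}P_{j1}$ vanishes, leaving
\[
\langle P_{j1},P_{k1}\rangle=\sum_{m=0}^{j}\sum_{i=0}^{2}\bigl(P_{(j-m)1}(q_i)\,\partial_n P_{(k+m+1)1}(q_i)-\partial_n P_{(j-m)1}(q_i)\,P_{(k+m+1)1}(q_i)\bigr).
\]
The $q_0$ summands all vanish (the normal derivative of any $i=1$ monomial is zero at $q_0$, which kills both products), and the $q_1$ and $q_2$ summands are equal by the reflection symmetry of the $i=1$ monomials, producing the factor $2$; inserting $P_{\ell1}(q_1)=\alpha_\ell$ and $\partial_n P_{\ell1}(q_1)=\eta_\ell$ gives the stated formula with the full range $\ell=0,\dots,j$. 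To obtain the truncated range $\ell=j-m_*,\dots,j$ one instead expands the factor of \emph{smaller} index (use $\langle P_{j1},P_{k1}\rangle=\langle P_{k1},P_{j1}\rangle$ and $P_{j1}=\Delta^{k+1}P_{(j+k+1)1}$ when $k<j$), which produces only $k+1$ boundary terms; a change of summation variable identifies the two expressions since both equal the inner product.

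The other diagonal cases follow the same template. For $\langle P_{j2},P_{k2}\rangle$ one expands $P_{k2}=\Delta^{j+1}P_{(k+j+1)2}$, discards the vanishing $\Delta^{j+1}(\cdot)$ term, notes that $q_0$ again contributes nothing (now because $i=2$ monomials vanish at $q_0$), and evaluates the $q_1,q_2$ terms using $P_{\ell2}(q_1)=\beta_\ell$ and $\partial_n P_{\ell2}(q_1)=-\alpha_\ell$ from Theorem~B; here one must remember that the latter identity fails at $\ell=0$, where $\partial_n P_{02}(q_1)=-\tfrac12$, and this low-degree correction is precisely what makes the primed coefficients $\alpha'$ appear. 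For $\langle P_{j3},P_{k3}\rangle$ the antisymmetry of the $i=3$ monomials gives $P_{\ell3}(q_2)=-P_{\ell3}(q_1)$ and $\partial_n P_{\ell3}(q_2)=-\partial_n P_{\ell3}(q_1)$, so the $q_1$ and $q_2$ contributions are again equal; combined with $P_{\ell3}(q_1)=\gamma_\ell=3\alpha_{\ell+1}$ and $\partial_n P_{\ell3}(q_1)=3\eta_{\ell+1}$ this yields the overall factor $18=2\cdot3\cdot3$. The cross term $\langle P_{j1},P_{k2}\rangle$ is handled identically by expanding $P_{k2}=\Delta^{j+1}P_{(k+j+1)2}$; once more only $q_1,q_2$ survive, and the exceptional value $\partial_n P_{02}(q_1)=-\tfrac12$ against $-\alpha_0$ accounts for the primed $\alpha'$. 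The vanishing $\langle P_{j1},P_{k3}\rangle=\langle P_{j2},P_{k3}\rangle=0$ is pure symmetry: $P_{j1},P_{j2}$ are symmetric and $P_{k3}$ antisymmetric under the reflection fixing $q_0$, while $\mu$ is invariant under that reflection, so each integral equals its own negative.

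Finally, for the relations among the $P_{k3}^{(n)}$ at different base points: the $120^\circ$ rotation $\rho$ is an isometry of $SG$ that commutes with $\Delta$, $\partial_n$, $\partial_T$ and sends $q_n$ to $q_{n+1}$, so $\rho^{*}P_{\ell3}^{(n)}$ satisfies exactly the defining conditions of $P_{\ell3}^{(n-1)}$; thus $\rho$ cyclically permutes $\{P_{\ell3}^{(0)},P_{\ell3}^{(1)},P_{\ell3}^{(2)}\}$, and invariance of $\mu$ under $\rho$ gives $\langle P_{j3}^{(n)},P_{k3}^{(n)}\rangle=\langle P_{j3}^{(0)},P_{k3}^{(0)}\rangle$. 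For the off-diagonal relation I would rerun the Gauss--Green computation on $\langle P_{j3}^{(0)},P_{k3}^{(1)}\rangle$, expanding $P_{k3}^{(1)}=\Delta^{j+1}P_{(k+j+1)3}^{(1)}$; the boundary values of $P_{\ell3}^{(1)}$ are the rotated versions of those of $P_{\ell3}^{(0)}$, so now $P_{\ell3}^{(1)}(q_1)=\partial_n P_{\ell3}^{(1)}(q_1)=0$ and at $q_0,q_2$ one gets $\mp\gamma_\ell,\ \mp3\eta_{\ell+1}$. Hence the $q_0$ and $q_1$ contributions both vanish and only the $q_2$ contribution remains --- exactly one of the two equal terms that made up the $n=n'=0$ sum, but with the opposite sign because $P_{\ell3}^{(1)}(q_2)=\gamma_\ell$ while $P_{\ell3}^{(0)}(q_2)=-\gamma_\ell$ --- which gives $\langle P_{j3}^{(0)},P_{k3}^{(1)}\rangle=-\tfrac12\langle P_{j3}^{(0)},P_{k3}^{(0)}\rangle$, and the general case follows by applying $\rho$. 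The main obstacle throughout is not conceptual but bookkeeping: organizing the double sum over $(m,i)$, matching index ranges after the change of variable that produces $m_*=\min(j,k)$, and correctly inserting the exceptional low-degree values (the various $-\tfrac12$'s) that interrupt the otherwise uniform pattern.
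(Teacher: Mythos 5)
Your proposal is correct and follows essentially the same route as the paper: both reduce each inner product to boundary data via repeated Gauss--Green (the paper phrases the iteration as an induction on $j$, you write the iterated formula directly, which is the same computation), then evaluate the boundary terms using Theorem~B and the symmetry/antisymmetry of the monomials, with the vanishing cross terms and the $q_2$-only contribution in the $\langle P_{j3}^{(0)},P_{k3}^{(1)}\rangle$ computation handled exactly as in the paper. Your added explanations of the $m_*$ truncation and of the rotation argument for $\langle P_{j3}^{(n)},P_{k3}^{(n)}\rangle=\langle P_{j3}^{(0)},P_{k3}^{(0)}\rangle$ only make explicit steps the paper leaves implicit.
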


\begin{proof}
It is enough to prove that for each $k, j$ and any $i, i'$ we have:
\begin{equation}\label{innerprod}
\ip{P_{ji}}{P_{ki'}} =  \sum_{\ell=0}^j \sum _{n=0}^2 \left( P_{(j-\ell)i}(q_n) \partial_n P_{(k+1+\ell)i'} (q_n) - P_{(k+1+\ell)i'}(q_n) \partial_n P_{(j-\ell)i} (q_n) \right)
\end{equation}
Notice that by a symmetry argument we can assume that $k\geq j$ and proved the result by induction on $j$. 
Fix $i, i' \in \{1, 2, 3\}$ and notice that for all $k\geq 0$ Green-Gauss' formula gives:
\begin{eqnarray*}
\ip{P_{0i}}{P_{ki'}} &=& \int_{SG} P_{0i} P_{ki'} \,d\mu = \int_{SG} P_{0i}\Delta P_{(k+1)i'} \,d\mu \\[.2ex]
&=&  \int_{SG} \Delta P_{0i}P_{ki'} \,d\mu+ \sum_{n=0}^2 P_{0i}(q_n)\partial_n P_{ki'}(q_n) - P_{ki'}(q_n)\partial_n P_{0i}(q_n) \\[.2ex]
&=& \sum_{n=0}^2 P_{0i}(q_n)\partial_n P_{(k+1)i'}(q_n) - P_{(k+1)i'}(q_n)\partial_n P_{0i}(q_n),  
\end{eqnarray*}where we have used the fact that $\Delta P_{0i} =0$. This establishes~\eqref{innerprod} for $j=0$, and all $k\geq 0$ . 

Now assume that  for some $j$ and for $k\geq j$ we have 
\[\ip{P_{ji}}{P_{ki'}} =  \sum_{\ell=0}^j \sum _{n=0}^2 \left( P_{(j-\ell)i}(q_n) \partial_n P_{(k+1+\ell)i'} (q_n) - P_{(k+1+\ell)i'}(q_n) \partial_n P_{(j-\ell)i} (q_n) \right), \]
we just have to show that this last relation holds for $j+1$ and $k\geq j+1$.  But, by the Gauss-Green formula we have 
\begin{eqnarray*}
\ip{P_{(j+1)i}}{P_{ki'}} &=& \int_{SG} P_{(j+1)i}P_{ki'} \,d\mu = \int_{SG} P_{(j+1)i}\Delta P_{(k+1)i'} \,d\mu \\[.2ex]
&=& \ip{P_{ji}}{P_{(k+1)i'}} + \sum_{n=0}^2 \left( P_{(j+1)i}(q_n)\partial_n P_{(k+1)i'}(q_n) - P_{(k+1)i'}(q_n)\partial_n P_{(j+1)i}(q_n) \right) \\[.2ex]
&=& \sum_{\ell=0}^j \sum _{n=0}^2 \left( P_{(j-\ell)i}(q_n) \partial_n P_{(k+2+\ell)i'} (q_n) - P_{(k+2+\ell)i'}(q_n) \partial_n P_{(j-\ell)i} (q_n)\right)\\[.2ex]
&+& \sum_{n=0}^2 \left( P_{(j+1)i}(q_n)\partial_n P_{(k+1)i'}(q_n) - P_{(k+1)i'}(q_n)\partial_n P_{(j+1)i}(q_n) \right) \\[.2ex]
&=& \sum_{\ell=1}^{j+1} \sum _{n=0}^2 \left( P_{(j+1-\ell)i}(q_n) \partial_n P_{(k+1+\ell)i'} (q_n) - P_{(k+1+\ell)i'}(q_n) \partial_n P_{(j+1-\ell)i} (q_n) \right) \\[.2ex] 
&+& \sum_{n=0}^2 \left( P_{(j+1)i}(q_n)\partial_n P_{(k+1)i'}(q_n) - P_{(k+1)i'}(q_n)\partial_n P_{(j+1)i}(q_n) \right) \\[.2ex]
&=& \sum_{\ell=0}^{j+1} \sum _{n=0}^2 \left( P_{(j+1-\ell)i}(q_n) \partial_n P_{(k+1+\ell)i'} (q_n) - P_{(k+1+\ell)i'}(q_n) \partial_n P_{(j+1-\ell)i} (q_n) \right)
\end{eqnarray*}
This shows that~\eqref{innerprod} holds for all $j \ge0$ and $k\geq j$.

Next,~\eqref{piinnerprod} follows from~\eqref{innerprod} by symmetry consideration and by observing that the values and normal derivatives at $q_0$ vanish.  In addition, Theorem B, especially~\eqref{recursion} can be used to further simplify~\eqref{innerprod} and  establish~\eqref{piinnerprod}. 

Note that~\eqref{antssinnerprod} is trivial because of the symmetry of $P_{j1}$ and $P_{j2}$ and the anti-symmetry of $P_{j3}$.

To prove~\eqref{difinnerprodp3}, it is enough to show that  
\begin{equation*}
\ip{P_{j3}^{(0)}}{P_{k3}^{(1)}} = -\dfrac{1}{2} \ip{P_{j3}^{(0)}}{P_{k3}^{(0)}}
\end{equation*}
since the other inner products can be computed using  similar arguments.

Using Equation \eqref{innerprod}, and canceling out zero terms yields
\begin{eqnarray*}
\ip{P_{j3}^{(0)}}{P_{k3}^{(1)}} &=& 
\sum_{\ell=0}^j P_{(j-\ell)3}^{(0)}(q_2) \partial_n P_{(k+\ell+1)3}^{(1)}(q_2)  - P_{(k+\ell+1)3}^{(1)}(q_2) \partial_n P_{(j-\ell)3}^{(0)}(q_2) \\
&=&-9 \sum_{\ell=0}^j \alpha_{j-\ell+1}\eta_{k+\ell+2} - \alpha_{k+\ell+1}\eta_{k-\ell+1}\\
&=&-\dfrac{1}{2}  \ip{P_{j3}^{(0)}}{P_{k3}^{(0)}}
\end{eqnarray*}
\end{proof}

\section{Orthogonal polynomials on $SG$}\label{sec2}
We are now ready to construct the families of orthogonal polynomials on $SG$. These OP will be obtained by applying the Gram-Schmidt algorithm on the polynomials $\{P_{jk}^{(n)}\}_{j\geq 0}$, where $k=1, 2, 3$, and $n=0, 1,$ or $2$. In  Subsections~\ref{subsec3}, and ~\ref{subsec4}, we plot the graphs of  various OP obtained from $\{P_{jk}\}_{j\geq 0}$, and $k=1, 2, 3$. As mentioned in the introduction, more graphs of these OP can be found at \cite{webpage}. 

But we first derive  a three-term recursion formula for our OP and give  estimates on the size of the coefficients appearing in this recursion formula. These coefficients are extremely small. This creates major instability problems when plotting the corresponding  OP. To circumvent this problem, we carry out our numerical simulation  to arbitrary precision. A similar phenomenon was already observed in \cite{nsty1}, and rational arithmetic was used. We also considered the use of rational arithmetic, but because it involves a  prohibitive time cost, we settled for arbitrary precision instead.

\subsection{General theory of orthogonal polynomials on $SG$}

\begin{deft}\label{def2}
Fix $k=1, 2$ or $3$ and denote by $\{p_j\}_{j=0}^{\infty}:=\{p_{jk}\}_{j=0}^{\infty}$  the orthogonal polynomials obtained from $\{P_{jk}\}_{j=0}^{\infty}$ by the Gram-Schmidt process, i.e., $p_j=P_{jk}-\sum_{l=0}^{j-1}d_{l}^{2}\ip{P_{jk}}{p_l}p_l$ for each $j\geq 1$. Consequently, there exists a set of coefficients   $\{\omega_{j,l}\}_{l=0}^{j}$, with $\omega_{j,j}=1$, and such that 
\begin{equation}\label{antisymop}
p_{j}(x)= P_{jk}-\sum_{l=0}^{j-1}d_{l}^{2}\ip{P_{jk}}{p_l}p_l      =P_{jk}(x) + \sum_{l=0}^{j-1}\omega_{j, l}P_{lk}(x), j\geq 1 .
\end{equation}  Moreover,  
$$\ip{p_j}{p_\ell}=d^{-2}_{j}\delta_{j,\ell}\, \textrm{where}\, \nm{p_j}_{L^{2}}^{2}=d_{j}^{-2}.$$

By normalizing the orthogonal polynomials $\{p_j\}_{j=0}^{\infty}$ we obtain the family of orthonormal polynomials
 $\{ Q_j \}_{j=0}^{\infty}$  characterized by 
\begin{equation*}
\left\{ \begin{array} {r@{\quad = \quad}l}
\langle Q_j, Q_k \rangle  & \delta_{j,k}\\
Q_j & d_j p_{j}=d_j P_{jk}+  d_{j}\sum_{l=0}^{j-1}\omega_{j,l}P_{lk},
j\geq 1.
\end{array}\right.
\end{equation*}
\end{deft}

\begin{thm}\label{nmpk}
Given $k=1, 2$ or $3$, and for each $j\geq 0$ the following holds:
$$\nm{p_j}_{L^{2}}=d_{j}^{-1}\leq \nm{P_{j,k}}_{L^{2}}.$$ 
Moreover,  for any $0< r< \infty$, there exist constants $c_1, c_r>0$ such that for all $j\geq 0$
\begin{equation}\label{decayestnmpk}
\nm{p_j}_{L^{2}}=d_{j}^{-1}\leq c_{1}(j!)^{-\log 5/\log 2} + c_{r}r^{-j}.
\end{equation}

In particular, $$\lim_{j\to \infty}\nm{p_{j}}_{L^{2}}=\lim_{j \to \infty} d_{j}^{-1}=0.$$
\end{thm}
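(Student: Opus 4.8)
The plan is to bound $\|p_j\|_{L^2} = d_j^{-1}$ by $\|P_{jk}\|_{L^2}$ using the minimizing property of Gram–Schmidt, and then to bound $\|P_{jk}\|_{L^2}$ using the monomial recursion data from Theorem B together with the inner-product formulas of Lemma~\ref{lemma1}. First I would record the elementary fact that $p_j$ is the $L^2$-orthogonal projection of $P_{jk}$ onto the orthogonal complement of $\operatorname{span}\{P_{0k},\dots,P_{(j-1)k}\} = \mathcal{H}^{j-1}\cap(\text{appropriate symmetry class})$, hence $\|p_j\|_{L^2}\le\|P_{jk}\|_{L^2}$; this gives the first displayed inequality immediately since $d_j^{-2}=\langle p_j,p_j\rangle$. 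For the quantitative decay in~\eqref{decayestnmpk}, it then suffices to show $\|P_{jk}\|_{L^2}\le c_1 (j!)^{-\log 5/\log 2}+c_r r^{-j}$ for each fixed $r$.

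The key step is therefore to estimate $\|P_{jk}\|_{L^2}^2 = \langle P_{jk},P_{jk}\rangle$ via Lemma~\ref{lemma1}: for instance $\langle P_{j1},P_{j1}\rangle = 2\sum_{\ell=0}^{j}(\alpha_{j-\ell}\eta_{j+\ell+1}-\alpha_{j+\ell+1}\eta_{j-\ell})$, and similarly for $i=2,3$. So I need decay estimates on the sequences $\alpha_j,\beta_j,\gamma_j,\eta_j$ from Theorem B. The crucial one is $\alpha_j$, governed by $\alpha_j=\frac{4}{5^j-5}\sum_{\ell=1}^{j-1}\alpha_{j-\ell}\alpha_\ell$ for $j\ge 2$. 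From the $5^{-j}$ gain at each stage one expects $|\alpha_j|$ to decay roughly like $5^{-j(j-1)/2}$ up to combinatorial factors; more precisely I would prove by induction a bound of the shape $|\alpha_j|\le C\rho^j (j!)^{-\log 5/\log 2}$ for suitable constants (the exponent $\log 5/\log 2$ arising because $5^{-\binom{j}{2}}$ compares to $(j!)^{-\log 5/\log 2}$ modulo a geometric factor, since $5^{\binom{j}{2}} = (2^{\binom{j}{2}})^{\log 5/\log 2}$ and $2^{\binom{j}{2}}$ is comparable to $j!$ up to a geometric factor). Then $\gamma_j=3\alpha_{j+1}$ inherits the same bound; $\beta_j$ and $\eta_j$ satisfy linear recursions driven by $\alpha$ (convolution with $\alpha_{j-\ell}$, resp.\ an explicit weight times $\alpha_j$ plus convolution of $\eta$ with $\beta$), so a similar induction — tracking one extra geometric factor — yields comparable bounds for them. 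Substituting these into the sums in~\eqref{piinnerprod} and using that the dominant term in $\sum_\ell \alpha_{j-\ell}\eta_{j+\ell+1}$ comes from the endpoints $\ell=0$ or $\ell=j$ (where one factor is $O(1)$ and the other carries the full $(j!)^{-2\log 5/\log 2}$-type decay, actually better, of order $((2j)!)^{-\cdots}$), I get $\|P_{jk}\|_{L^2}^2$ dominated by a term of order $(j!)^{-2\log 5/\log 2}$ plus geometric corrections, whence the stated bound after taking square roots and absorbing the geometric correction into the $c_r r^{-j}$ term (which is why the statement is phrased with an arbitrary $r$ and constants depending on $r$).

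The main obstacle will be making the induction on $\alpha_j$ (and then $\beta_j,\eta_j$) clean: the quadratic recursion $\alpha_j = \frac{4}{5^j-5}\sum_{\ell=1}^{j-1}\alpha_{j-\ell}\alpha_\ell$ is a convolution, so an inductive hypothesis of the form $|\alpha_j|\le A^j/G(j)$ requires that $\sum_{\ell=1}^{j-1} G(j)/(G(j-\ell)G(\ell))$ stay bounded (or grow only geometrically) — i.e.\ $G$ must be log-convex of the right rate, which is exactly why $G(j)=(j!)^{\log 5/\log 2}$ (equivalently $\asymp 5^{\binom{j}{2}}$ up to a geometric factor) is the natural choice: then $G(j)/(G(j-\ell)G(\ell))\le 5^{-(j-\ell)\ell}\cdot(\text{geometric})$, summable. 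I would need to choose the constant $A$ large enough to start the induction and to dominate the factor $\tfrac{4}{1-5^{1-j}}$ and the resulting bounded sum; the bookkeeping for $\beta_j$ and $\eta_j$ is analogous but must carry the extra weight $(3\cdot 5^{j-\ell}-5^{\ell+1}+6)$ resp.\ $\tfrac{5^j+1}{2}$, each of which is harmless since it is absorbed by one application of the $5^{-j}$-type gain. Once the sequence bounds are in hand, plugging into Lemma~\ref{lemma1} and taking $j\to\infty$ gives $\|p_j\|_{L^2}\to 0$ as claimed; the final limit statement is then immediate from~\eqref{decayestnmpk} since both terms on the right tend to $0$.
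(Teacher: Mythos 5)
Your overall route is the same as the paper's. For the first inequality you use the projection/Bessel property of Gram--Schmidt, exactly as the paper does. For the quantitative decay, the paper does \emph{not} re-derive the asymptotics of $\alpha_j$ and $\eta_j$: it simply cites \cite[Theorems 2.7 and 2.13]{nsty1} for the bounds $0<\alpha_j<c\,(j!)^{-\log 5/\log 2}$ and $|\eta_j|\le c_r r^{-j}$ (for every $r$), and then substitutes these into the inner-product formulas of Lemma~\ref{lemma1}, which is precisely your final step. So modulo that citation your proposal is the paper's proof.

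The part you add --- re-proving the decay of $\alpha_j,\beta_j,\eta_j$ from the recursions --- is where the genuine problems lie. First, $2^{\binom{j}{2}}$ is \emph{not} comparable to $j!$ up to a geometric factor: $\log 2^{\binom{j}{2}}\sim \tfrac{\log 2}{2}j^2$ while $\log j!\sim j\log j$, so $5^{-\binom{j}{2}}$ and $(j!)^{-\log 5/\log 2}$ are wildly different scales; the correct order of $\alpha_j$ is the latter (times a geometric factor), and the convolution $\sum_{\ell}\alpha_{j-\ell}\alpha_\ell$ is dominated by its \emph{middle} terms, not its endpoints. Second, with $G(j)=(j!)^{\log 5/\log 2}$ one has $G(j)/\bigl(G(j-\ell)G(\ell)\bigr)=\binom{j}{\ell}^{\log 5/\log 2}\ge 1$, the opposite direction of your claimed bound $\le 5^{-(j-\ell)\ell}\cdot(\text{geometric})$. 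What actually makes the induction viable is that $\sum_{\ell=1}^{j-1}\binom{j}{\ell}^{\log 5/\log 2}=O\bigl(5^j j^{-(\log 5/\log 2-1)/2}\bigr)$, so after multiplying by $4/(5^j-5)$ the accumulated factor tends to $0$; but as written your constant bookkeeping does not close (a crude bound $\sum_\ell\binom{j}{\ell}^{p}\le 5^j$ leaves a stray factor of about $4$ at every step). Third, $\eta_j$ does \emph{not} satisfy a factorial-type bound ``comparable'' to $\alpha_j$'s; the bound that holds (and is what the paper uses) is only $|\eta_j|\le c_r r^{-j}$, and this is exactly why the final estimate~\eqref{decayestnmpk} has two terms, one factorial and one geometric. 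If you intend the proof to be self-contained, the induction needs to be redone with the correct comparison function and a sharper treatment of the convolution sums; if you are content to quote \cite{nsty1} as the paper does, the rest of your argument goes through.
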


\begin{proof}
Recall that by the definition of $p_j$ we can write that for $j\geq 1$, the Gram-Schmidt process yields $p_j=P_{jk}-\sum_{l=0}^{j-1}d_{l}^{2}\ip{P_{jk}}{p_l}p_l$ for each $j\geq 1$. Therefore,

$$\nm{P_{jk}}_{L^{2}}^{2}=\nm{p_j}_{L^{2}}^{2} + \sum_{l=0}^{j-1}\ip{P_{jk}}{p_{l}}^{2}=d_{j}^{-2} + \sum_{l=0}^{j-1}\ip{P_{jk}}{p_{l}}^{2}\geq d_{j}^{-2}$$ which establishes the first estimate.

Now recall from \cite[Theorem 2.7]{nsty1} and \cite[Theorem 2.13]{nsty1}, that for each $r\in (0, \infty)$, there are $c>0$ and $c_r>0$ such that 
$$ 0< \alpha_{j}< c (j!)^{-\log5/\log 2}\qquad 0<|\eta_{j}|\leq c_{r}r^{-j}\quad \textrm{for \, all} \quad j \geq 0.$$
When $k=1$, we get from Lemma~\ref{lemma1} that
\begin{align*}
\nm{P_{j1}}_{L^{2}}^{2}&=2\sum_{l=0}^{j}\alpha_{j-l}\eta_{j+l+1}-\alpha_{j+l+1}\eta_{j-l}\\
& = 2\big{|}\sum_{l=0}^{j}\alpha_{l}\eta_{2j-l+1} - \sum_{l=0}^{j}\alpha_{2j-l+1}\eta_{l}\big{|}\\
&\leq 2c\, c_{r}r^{-j-1}\sum_{l=0}^{j}(l!)^{-\log 5/\log 2} + 2c\, c_{r}((j+1)!)^{-\log 5/\log2}\sum_{l=0}^{j}r^{-\log 5/\log 2}
\end{align*}
from which~\eqref{decayestnmpk} follows. When $k=2$ or $k=3$  similar arguments are used to obtain the same estimate. It then follows that $\lim_{j \to \infty}d_{j}^{-1}=0$ with at least an  exponential rate of decay. 
\end{proof}

Our first main result is the following theorem that establishes the three-term recursion formula for OP on $SG$. 

\begin{thm}\label{main1}
Let $\{p_k\}_{k=0}^{\infty}$ be the  orthogonal polynomials defined
above. Let $f_0(x)=0$ and for $k\geq 0$, let $f_{k+1}$ be the
polynomial defined by  
\begin{equation}\label{fk}
 f_{k+1}(x):=-\int G(x,y)p_{k}(y)\,d\mu(y).
 \end{equation} Set $p_{-1}(x):=0,$ and $p_{0}(x)=P_{03}(x)$. Then, for each $k\geq 0$
 \begin{equation}\label{threetermpk}
 p_{k+1}(x)=f_{k+1}(x) - b_k p_{k}(x) - c_{k} p_{k-1} (x),
\end{equation} where 

\begin{equation}\label{bkck}
\left\{ \begin{array} {r@{\quad = \quad}l}
 b_{k}&d_{k}^{2}\ip{f_{k+1}}{p_{k}},\\
 c_{k}&\tfrac{d_{k-1}^{2}}{d_{k}^{2}}=\tfrac{\nm{p_{k}}_{L^{2}}^{2}}{\nm{p_{k-1}}_{L^{2}}^{2}}.
 \end{array}\right.
 \end{equation}
 Consequently,
\begin{equation}\label{cknmpk}
d_{k}^{-2}=\nm{p_{k}}_{L^{2}}^{2}=d_{0}^{-2}c_1c_2c_3\hdots c_{k-1}c_{k}.
\end{equation} 
\end{thm}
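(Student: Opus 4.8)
The key identity to establish is the three-term recursion \eqref{threetermpk}; formulas \eqref{bkck} and \eqref{cknmpk} then follow almost immediately. The plan is to analyze the polynomial $f_{k+1}$ defined by \eqref{fk}. Since $f_{k+1} = -\int G(x,y)p_k(y)\,d\mu(y)$ solves $-\Delta f_{k+1} = -p_k$ with vanishing boundary data (by the Green's function property recalled in Section~\ref{analysis}), we have $\Delta f_{k+1} = p_k$. Because $p_k$ is a polynomial (a linear combination of the $P_{lk}$, each killed by some power of $\Delta$), $f_{k+1}$ is again a polynomial, one "degree" higher. The heart of the argument is to expand $f_{k+1}$ in the orthogonal basis $\{p_l\}_{l=0}^{k+1}$: write $f_{k+1} = \sum_{l=0}^{k+1} a_{k,l}\, p_l$ with $a_{k,l} = d_l^2 \ip{f_{k+1}}{p_l}$, and show that $a_{k,l}=0$ for $l \le k-2$, $a_{k,k+1}=1$, $a_{k,k} = b_k$, and $a_{k,k-1} = c_k$.

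The vanishing $a_{k,l}=0$ for $l\le k-2$ is where the special structure must be used. First I would note that $f_{k+1}$ lies in the span of $\{P_{0k},\dots,P_{(k+1)k}\}$, hence also in $\mathrm{span}\{p_0,\dots,p_{k+1}\}$, so the expansion has no terms beyond $l=k+1$; comparing top-degree coefficients (using $\Delta f_{k+1}=p_k$, $\Delta P_{jk}=P_{(j-1)k}$, and $\omega_{j,j}=1$) gives $a_{k,k+1}=1$. For $l \le k-2$, I would compute $\ip{f_{k+1}}{p_l}$ using the Gauss--Green formula: since $\Delta f_{k+1} = p_k$ and the boundary values and normal derivatives of $f_{k+1}$ vanish (that is exactly what $f_{k+1}=-\int G p_k$ buys us), we get $\ip{f_{k+1}}{p_l} = \int_{SG} p_k\, g_l\, d\mu$ where $g_l$ is \emph{any} polynomial with $\Delta g_l = p_l$ — in particular one may take $g_l = -\int G(\cdot,y)p_l(y)\,d\mu(y) = f_{l+1}$, provided one checks the boundary terms also vanish, which they do since both $f_{k+1}$ and $f_{l+1}$ have zero boundary data. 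Thus $\ip{f_{k+1}}{p_l} = \ip{p_k}{f_{l+1}}$. But $f_{l+1} \in \mathrm{span}\{p_0,\dots,p_{l+1}\}$ with $l+1 \le k-1 < k$, so by orthogonality of $p_k$ to $p_0,\dots,p_{k-1}$ this inner product is $0$. This gives $a_{k,l}=0$ for $l\le k-2$, so $f_{k+1} = p_{k+1} + a_{k,k}p_k + a_{k,k-1}p_{k-1}$, i.e. \eqref{threetermpk} with $b_k = -a_{k,k} = -d_k^2\ip{f_{k+1}}{p_k}$ — wait, sign: rearranging gives $p_{k+1} = f_{k+1} - a_{k,k}p_k - a_{k,k-1}p_{k-1}$, so $b_k = a_{k,k} = d_k^2\ip{f_{k+1}}{p_k}$, matching \eqref{bkck}.

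It remains to identify $c_k = a_{k,k-1} = d_{k-1}^2 \ip{f_{k+1}}{p_{k-1}}$. Using the same Gauss--Green manipulation, $\ip{f_{k+1}}{p_{k-1}} = \ip{p_k}{f_k}$ (taking $g_{k-1}=f_k$, which satisfies $\Delta f_k = p_{k-1}$ with zero boundary data). Now from the recursion one level down, $f_k = p_k + b_{k-1}p_{k-1} + c_{k-1}p_{k-2}$, so $\ip{p_k}{f_k} = \ip{p_k}{p_k} = d_k^{-2}$. Hence $c_k = d_{k-1}^2 d_k^{-2}$, which is \eqref{bkck}; and since $d_0^{-2} = \|p_0\|^2 = \|P_{03}\|^2$ is a fixed positive constant, telescoping $c_1 c_2 \cdots c_k = d_1^{-2}d_0^2 \cdot d_2^{-2}d_1^2 \cdots d_k^{-2}d_{k-1}^2 = d_k^{-2}d_0^2$ gives \eqref{cknmpk}. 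The main obstacle is the careful bookkeeping of boundary terms in the Gauss--Green formula to justify $\ip{f_{k+1}}{p_l} = \ip{p_k}{f_{l+1}}$ — one must confirm that $f_{k+1}$ genuinely has vanishing boundary values \emph{and} vanishing normal derivatives is not needed, only vanishing values, while $f_{l+1}$ contributes no normal-derivative boundary term because its values vanish; keeping straight which factor contributes which boundary term, and that the base case $p_0 = P_{03}$, $f_1 = -\int G p_0$ starts the induction correctly, is the delicate part.
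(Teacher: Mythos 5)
Your proposal is correct and follows essentially the same route as the paper: expand $f_{k+1}$ in the orthogonal family, use the symmetry $\ip{f_{k+1}}{p_l}=\ip{f_{l+1}}{p_k}$ (which the paper obtains directly from $G(x,y)=G(y,x)$ and Fubini rather than from Gauss--Green boundary bookkeeping) together with the fact that $f_{l+1}$ has degree $l+1$ to kill the terms with $l\le k-2$, identify the leading coefficient as $1$ by monicity, and compute $c_k$ via $\ip{f_{k+1}}{p_{k-1}}=\ip{f_k}{p_k}=d_k^{-2}$. The only difference is presentational, so no further comparison is needed.
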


\begin{proof}
Using the definition of $f_{k+1}$ we can write \begin{equation*}
\langle f_{k+1}, p_j \rangle = -\int \int G(x,y)p_k(y) p_j(x) d\mu(y)d\mu(x) = \langle f_{j+1}, p_k \rangle
\end{equation*}
Notice that $f_{j+1}$ is a polynomial of degree $j+1$. Thus, $f_{j+1}$
is orthogonal to all $p_k$ with $j+1 <k$. Therefore, $f_{k+1}=a_k
p_{k+1} +b_{k}p_k + c_k p_{k-1}$ for some coefficients $a_k, b_k,$ and
$c_k$. It is easy to see that $a_k=1$ this follows from the fact
$f_{k+1} - p_{k+1}$ is orthogonal to $p_{k+1}$. Thus, $f_{k+1}=
p_{k+1} +b_{k}p_k + c_k p_{k-1}$. Taking inner product with $p_k$
yields the first equation in~\eqref{bkck}. Taking inner product with $p_{k-1}$ yields

\begin{align*}
c_{k}\ip{p_{k-1}}{p_{k-1}}&=c_{k}d_{k-1}^{-2}=\ip{f_{k+1}}{p_{k-1}}\\
&=-\int_{SG}\int_{SG}G(x,y)p_{k}(y)p_{k-1}(x)\, d\mu(x)\, d\mu(y)\\
&=\ip{f_{k}}{p_{k}}=\ip{p_{k}+b_{k-1}p_{k-1}+c_{k-1}p_{k-2}}{p_{k}}\\
&=d_{k}^{-2}
\end{align*}
which is exactly the second equation in~\eqref{bkck}.

Observe that the last equation in~\eqref{bkck} is equivalent to $c_{k}d_{k}^{2} = d_{k-1}^{2}$, which implies~\eqref{cknmpk}.
\end{proof}

The above results deserve some discussions. While~\eqref{threetermpk}
resembles~\eqref{classic3tr}, these two relations are fundamentally
different. In fact,~\eqref{classic3tr} is essentially the statement
that $t\pi_{k}$ is a monomial of degree $k+1$, and thus can be
expressed in terms of $\pi_l$, $l\leq k+1$. Because the product of
polynomials is not a polynomial on $SG$, $t\pi_k$ is replaced by $f_{k+1}(x)=-\int G(x,y)p_{k}(y)\,d\mu(y)$. Therefore, unlike in the classical case in which all information on $\pi_{k+1}$ can be gathered from the coefficients in~\eqref{classic3tr}, on $SG$ we must also evaluate the auxiliary polynomial $f_{k+1}$. This presents an additional difficulty in carrying out any numerical simulations with our OP.

We now prove a version of Theorem~\ref{main1} dealing with the three-term recurrence for the orthonormal polynomial $\{Q_{k}\}_{k=0}^{\infty}$.

\begin{thm}\label{main2}
Let $\{Q_k\}_{k=0}^{\infty}$ be the orthogonal polynomials defined above. Let $\tilde{f}_{0}=0$ and for $k\geq 0$
\begin{equation}\label{tildefk}
 \tilde{f}_{k+1}(x):=-\int G(x,y)Q_{k}(y)\,d\mu(y).
 \end{equation}  Then, for each $k\geq 0$
 
\begin{equation}\label{threetermQk}
 \sqrt{c_{k+1}}Q_{k+1}(x)=\tilde{f}_{k+1}(x) - b_k Q_{k}(x) - \sqrt{c_{k}} Q_{k-1} (x),
\end{equation}
 where $Q_{-1}(x):=0,$  $Q_{0}(x)=d_{0}P_{03}(x),$ and $b_k$ and $c_k$ were defined in Theorem~\ref{main1}. 
\end{thm}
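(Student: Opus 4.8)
The plan is to deduce Theorem~\ref{main2} directly from Theorem~\ref{main1} by renormalizing, since $Q_j = d_j p_j$. First I would record the two elementary identities relating the normalized and unnormalized objects. By linearity of the Green operator, $\tilde f_{k+1}(x) = -\int G(x,y) Q_k(y)\, d\mu(y) = d_k\bigl(-\int G(x,y) p_k(y)\, d\mu(y)\bigr) = d_k f_{k+1}(x)$. And from $c_k = d_{k-1}^2/d_k^2$ (and $d_k>0$) we get the ratio identities $d_k/d_{k-1} = c_k^{-1/2}$, hence $d_{k+1}/d_k = c_{k+1}^{-1/2}$ and $d_{k+1}/d_{k-1} = (c_{k+1} c_k)^{-1/2}$.

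Next I would take the recursion \eqref{threetermpk}, namely $p_{k+1} = f_{k+1} - b_k p_k - c_k p_{k-1}$, and multiply through by $d_{k+1}$. The left side becomes $Q_{k+1}$. On the right, using the identities above, $d_{k+1} f_{k+1} = (d_{k+1}/d_k)\tilde f_{k+1} = c_{k+1}^{-1/2}\tilde f_{k+1}$, $d_{k+1} p_k = (d_{k+1}/d_k) Q_k = c_{k+1}^{-1/2} Q_k$, and $d_{k+1} p_{k-1} = (d_{k+1}/d_{k-1}) Q_{k-1} = (c_{k+1} c_k)^{-1/2} Q_{k-1}$. Multiplying the resulting identity by $\sqrt{c_{k+1}}$ collapses the coefficients to give exactly \eqref{threetermQk}:
\[
\sqrt{c_{k+1}}\, Q_{k+1} = \tilde f_{k+1} - b_k Q_k - \sqrt{c_k}\, Q_{k-1}.
\]

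It then remains to check the endpoints and to confirm that the $b_k$ appearing here is literally the one from Theorem~\ref{main1}. For the latter, $\langle \tilde f_{k+1}, Q_k\rangle = d_k^2\langle f_{k+1}, p_k\rangle = b_k$, so no new coefficient is introduced (equivalently, just track the normalization through the computation above). For $k=0$ the term $\sqrt{c_k}\, Q_{k-1}$ is $\sqrt{c_0}\,Q_{-1}=0$ by the convention $Q_{-1}:=0$, so the (undefined) value $c_0$ plays no role, while the coefficients $c_{k+1}$ for $k\ge 0$ are all among the defined quantities $c_1, c_2, \ldots$. Finally $Q_0 = d_0 p_0 = d_0 P_{03}$, matching the stated initial data.

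I do not expect a genuine obstacle; the only thing requiring care is the consistent handling of the normalization constants $d_k$ via $c_k = d_{k-1}^2/d_k^2$, so that the square roots line up and the coefficient of $Q_{k-1}$ comes out as $\sqrt{c_k}$ rather than $c_k$ or $c_k/\sqrt{c_{k+1}}$.
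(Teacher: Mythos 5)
Your proposal is correct and is essentially the paper's own argument: the paper's proof is the one-line remark that the result follows from $p_k=\nm{p_k}_{L^{2}}Q_k$ together with $c_k=d_{k-1}^2/d_k^2$, which is exactly the renormalization you carry out in detail. Your explicit tracking of the square roots (so that the coefficient of $Q_{k-1}$ comes out as $\sqrt{c_k}$) and the check that $b_k=\ip{\tilde f_{k+1}}{Q_k}$ simply fill in what the paper leaves to the reader.
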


\begin{proof}
The proof follows from the fact that $p_k=\nm{p_k}_{L^{2}}Q_k$ and~\eqref{ck}. 
\end{proof}

We prove below certain properties about the coefficients $b_k$ and $c_k$ appearing in the three-term recursion formula.

\begin{thm}\label{boundbkck} For each $k\geq 0$, we have 
\begin{equation}\label{boundbk}
-\|G\|_{L^{2}}\leq b_k <0, 
\end{equation}
\begin{equation}\label{boundck}
0< c_k \leq \|G\|_{L^{2}}^2,
\end{equation}
and 
\begin{equation}\label{bounddk}
d_{k+1}^{-1} \leq \|G\|_{L^{2}}d_{k}^{-1}.
\end{equation}
In particular, 
$$d_{k}^{-1}\leq d_{0}^{-1}\nm{G}_{L^{2}}^{k}, $$ where $\nm{G}_{L^{2}}=\bigg(\iint_{SG\times SG}G(x,y)^{2}dx dy\bigg)^{1/2}.$
\end{thm}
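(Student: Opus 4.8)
The plan is to extract everything from the single inequality $\nm{f_{k+1}}_{L^2} \le \nm{G}_{L^2}\,\nm{p_k}_{L^2}$, which follows from the Cauchy--Schwarz inequality applied to the double integral defining $f_{k+1}$ in~\eqref{fk}: writing $G$ as the kernel of the integral operator $\mathcal{G}$, we have $f_{k+1} = -\mathcal{G}p_k$, and the operator norm of $\mathcal{G}$ on $L^2(SG)$ is bounded by the Hilbert--Schmidt norm $\nm{G}_{L^2}$. (Alternatively, estimate $|f_{k+1}(x)|^2 \le \big(\int G(x,y)^2 d\mu(y)\big)\big(\int p_k(y)^2 d\mu(y)\big)$ pointwise and integrate in $x$.) So the first step is simply to record $\nm{f_{k+1}}_{L^2} \le \nm{G}_{L^2}\,\nm{p_k}_{L^2} = \nm{G}_{L^2}\,d_k^{-1}$.

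Next I would decompose $f_{k+1}$ in the orthonormal-like basis provided by Theorem~\ref{main1}: from $f_{k+1} = p_{k+1} + b_k p_k + c_k p_{k-1}$ and the orthogonality $\ip{p_i}{p_j} = d_i^{-2}\delta_{ij}$, Pythagoras gives
\begin{equation*}
\nm{f_{k+1}}_{L^2}^2 = \nm{p_{k+1}}_{L^2}^2 + b_k^2 \nm{p_k}_{L^2}^2 + c_k^2 \nm{p_{k-1}}_{L^2}^2 = d_{k+1}^{-2} + b_k^2 d_k^{-2} + c_k^2 d_{k-1}^{-2}.
\end{equation*}
Combining with the Cauchy--Schwarz bound yields $d_{k+1}^{-2} + b_k^2 d_k^{-2} + c_k^2 d_{k-1}^{-2} \le \nm{G}_{L^2}^2 d_k^{-2}$. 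Each of the three terms on the left is nonnegative, so I read off three consequences: $d_{k+1}^{-2} \le \nm{G}_{L^2}^2 d_k^{-2}$, which is~\eqref{bounddk}; $b_k^2 \le \nm{G}_{L^2}^2$, i.e. $|b_k| \le \nm{G}_{L^2}$; and $c_k^2 d_{k-1}^{-2} \le \nm{G}_{L^2}^2 d_k^{-2}$, which combined with $c_k = d_{k-1}^2/d_k^2$ (from~\eqref{bkck}) gives $c_k \le \nm{G}_{L^2}^2$, establishing~\eqref{boundck}; positivity $c_k > 0$ is immediate since it is a ratio of squared norms. Iterating~\eqref{bounddk} down to $k=0$ gives $d_k^{-1} \le d_0^{-1}\nm{G}_{L^2}^k$, the final claim.

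The one remaining point, and the only genuinely non-mechanical step, is the \emph{sign} of $b_k$: I must show $b_k < 0$, not merely $|b_k| \le \nm{G}_{L^2}$. Here I would use $b_k = d_k^2 \ip{f_{k+1}}{p_k}$ from~\eqref{bkck} together with the fact that $-G$ is a negative-definite kernel — more precisely, $\ip{f_{k+1}}{p_k} = -\iint G(x,y)p_k(y)p_k(x)\,d\mu(x)d\mu(y) = -\ip{(-\Delta)^{-1}p_k}{p_k} = -\mathcal{E}(u_k,u_k)$ where $u_k$ solves $-\Delta u_k = p_k$ with $u_k|_{V_0}=0$, and this Dirichlet energy is strictly positive because $p_k \not\equiv 0$ (a nonzero $L^2$ function cannot have zero energy extension). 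Thus $\ip{f_{k+1}}{p_k} < 0$, hence $b_k < 0$. This is the step I expect to require the most care, since it is the one place the argument uses a structural property of $G$ (its interpretation via the Green operator of $-\Delta$ with Dirichlet boundary conditions) rather than just Cauchy--Schwarz and orthogonality; one must be sure that the quadratic form $\ip{-\mathcal{G}v}{v}$ is strictly, not merely weakly, positive on nonzero polynomials, which follows since $\mathcal{G}$ is injective on $L^2(SG)$.
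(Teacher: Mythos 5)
Your proposal is correct and rests on the same two pillars as the paper's proof: the Hilbert--Schmidt bound $\nm{f_{k+1}}_{L^{2}}\leq \nm{G}_{L^{2}}\nm{p_k}_{L^{2}}$ and the strict positive-definiteness of the Green operator (you phrase it via Dirichlet energy, the paper via the eigenfunction expansion $G(x,y)=\sum_l \lambda_l^{-1}u_l(x)u_l(y)$, which are equivalent). Your organization is in fact slightly more economical, since you read all three magnitude bounds off the single Pythagoras identity $\nm{f_{k+1}}_{L^{2}}^{2}=d_{k+1}^{-2}+b_k^2 d_k^{-2}+c_k^2 d_{k-1}^{-2}$, whereas the paper bounds $b_k$ and $c_k$ by separate Cauchy--Schwarz estimates on $\ip{f_{k+1}}{p_k}$ and $\ip{f_{k+1}}{p_{k-1}}$ and uses that identity only for \eqref{bounddk}.
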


\begin{proof} $|b_{k}|=d_{k}^{2}|\ip{f_{k+1}}{p_{k}}|\leq
  d_{k}^{2}\nm{f_{k+1}}_{L^{2}}\nm{p_{k}}_{L^{2}}\leq
  d_{k}^{2}\nm{G}_{L^{2}}\nm{p_{k}}_{L^{2}}^{2}=\nm{G}_{L^{2}}.$ 

Using the expression of $G(x, y)$ given in the proof of Theorem~\ref{nmgreen}, we can write  $$\ip{f_{k+1}}{p_{k}}=-\sum_{l=1}^{\infty}\lambda^{-1}_{l}\ip{p_{k}}{u_{l}}^{2} < 0.$$ Hence, $b_k <0$. 
Similarly, 
$$0<c_{k}d^{-2}_{k-1}=\ip{f_{k+1}}{p_{k-1}}\leq \nm{f_{k+1}}_{L^{2}}\nm{p_{k-1}}_{L^{2}}\leq \nm{G}_{L^{2}}\nm{p_{k}}_{L^{2}}\nm{p_{k-1}}_{L^{2}}=\nm{G}_{L^{2}}d_{k}^{-1}d_{k-1}^{-1}.$$ Using the fact that $c_{k}=d_{k}^{-2}d_{k-1}^{2}$, we have $$ 0< c_k \leq \nm{G}_{L^{2}}d_{k-1}d_{k}^{-1}=\nm{G}_{L^{2}}\sqrt{c_{k}},$$ from which the second estimate follows.

The last estimate follows by observing that  $$0<d_{k+1}^{-2}< \nm{f_{k+1}}_{L^{2}}^{2}=d_{k+1}^{-2}+b_{k}^{2}d_{k}^{-2}+c_{k}^{2}d_{k-1}^{-2}\leq \nm{G}_{L^{2}}^{2}d^{-2}_{k}.$$ Consequently, $$d_{k}^{-1}\leq d_{0}^{-1}\nm{G}_{L^{2}}^{k}.$$
\end{proof}

\begin{rem}
The last estimate in Theorem~\ref{boundbkck} along with the estimate on the $\|G\|_{L^{2}}$ given in Theorem~\ref{nmgreen}, gives another proof that the sequence $d_{k}^{-1}$ decays exponentially fast. 
\end{rem}

\subsection{Recurrence relations for orthogonal polynomials on $SG$}

We have now proved via two different arguments that the coefficients
$d_{k}^{-1}$ are very small. This, together with the fact that we must also evaluate the {\it auxiliary} polynomial $f_{k+1}$ prevents us to use Theorem~\ref{main1} to directly plot the OP we construct. Rather we use~\eqref{threetermpk} together with the representation of $p_k$ in Definition~\ref{def2} to plot these OP. More specifically, we identify each $p_j$ with a vector $\Omega_{j}\in \R^{j+1}$ where $\Omega_{j}(l)=\omega_{j, l}$ for $l=0, 1, \hdots, j-1$ and $\Omega_{j}(j)=1$. This can be simply written as 
\[
{p}_j = \sum_{\ell=0}^j {\omega_{j,\ell}} P_{\ell,k} \leftrightarrow \Omega_{j}=\left( \begin{array}{c} {\omega}_{j,0} \\ {\omega}_{j,1} \\ \vdots \\ {\omega}_{j,j}\end{array}\right) =\left( \begin{array}{c} {\omega}_{j,0} \\ {\omega}_{j,1} \\ \vdots \\ 1\end{array}\right) 
\]

We now derive a recursion relation for the coefficients $\Omega_j$  appearing in the above representation of $p_j$. We shall later, implement this recurrence using arbitrary precision to give accurate graphs of $Q_j=d_{j}^{-1}p_j$.

\begin{thm}\label{pkjet}
The recurrence relation for the  orthogonal polynomials $p_k$ is accomplished by evaluating the following equations for each $k\geq 0$.
$$\Omega_{k+1}=\left( \begin{array}{c}{\zeta}_{k} \\ {\Omega}_{k}\end{array}\right) - b_{k} \left( \begin{array}{c} {\Omega}_{k}\\ 0\end{array}\right)
-c_{k} \left( \begin{array}{c} {\Omega}_{k-1}\\ 0\\0\end{array}\right),$$ or more specifically, 
\begin{equation}\label{matrixrec}
\left( \begin{array}{c} {\omega}_{k+1,0} \\ {\omega}_{k+1,1} \\ \vdots \\ {\omega}_{k+1,k-1}\\ {\omega}_{k+1,k}\\ 1\end{array}\right) =  \left( \begin{array}{c} {\zeta}_{k} \\ {\omega}_{k,0} \\ \vdots \\ {\omega}_{k,k-2}\\ {\omega}_{k,k-1}\\1\end{array}\right) - {b}_{k} \left( \begin{array}{c} {\omega}_{k,0} \\ {\omega}_{k,1} \\ \vdots \\ {\omega}_{k,k-1}\\ 1\\0\end{array}\right) -{c}_{k} \left( \begin{array}{c} {\omega}_{k-1,0} \\ {\omega}_{k-1,1} \\ \vdots \\ \omega_{k-1, k-2}\\ 1\\ 0\\ 0\end{array}\right)
\end{equation}
where $b_k, c_k$ where defined in Theorem~\ref{main1}
and $ {\zeta}_k = -\frac{1}{\gamma_0} \sum_{\ell=0}^k
\omega_{k,\ell}\gamma_{\ell+1}, $ with $\gamma_\ell$ given in~\eqref{recursion}.
\end{thm}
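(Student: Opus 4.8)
The plan is to start from the three-term recursion~\eqref{threetermpk}, namely $p_{k+1}=f_{k+1}-b_kp_k-c_kp_{k-1}$, and to translate each of the three terms on the right-hand side into an operation on the coefficient vectors $\Omega_j$. The terms $-b_kp_k$ and $-c_kp_{k-1}$ are immediate: since $p_k=\sum_{\ell=0}^k\omega_{k,\ell}P_{\ell k}$, multiplication by the scalar $-b_k$ scales $\Omega_k$, and to add it to a vector in $\R^{k+2}$ (the ambient space of $\Omega_{k+1}$) we pad with a zero in the top slot, producing $-b_k({}^{t}\!\Omega_k,0)$; likewise $-c_kp_{k-1}$ contributes $-c_k({}^{t}\!\Omega_{k-1},0,0)$. (Here one must be careful about \emph{which} end of the vector receives the padding zeros: with the indexing convention $\Omega_j(\ell)=\omega_{j,\ell}$ and $\Omega_j(j)=1$, the entry $\omega_{k,k}=1$ sits in position $k$, which in $\Omega_{k+1}$ is position $k$ as well, i.e. the second-from-bottom slot, so the zeros are appended at the bottom; this matches the display in the statement.)

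The only real content is therefore identifying the coefficient vector of $f_{k+1}$ in the $\{P_{\ell k}\}$-basis, and showing it equals $({}^{t}\zeta_k, {}^{t}\!\Omega_k)$ with $\zeta_k=-\tfrac1{\gamma_0}\sum_{\ell=0}^k\omega_{k,\ell}\gamma_{\ell+1}$. First I would record that $f_{k+1}$ solves $-\Delta f_{k+1}=p_k$ with $f_{k+1}|_{V_0}=0$, by the defining property of the Green's function; consequently $\Delta f_{k+1}=-p_k=-\sum_{\ell=0}^k\omega_{k,\ell}P_{\ell k}$. Since the monomials satisfy $\Delta P_{(\ell+1)k}=P_{\ell k}$ (recalled after the Definition of the monomials), the polynomial $g:=f_{k+1}+\sum_{\ell=0}^k\omega_{k,\ell}P_{(\ell+1)k}$ is harmonic, i.e. $g\in\mathcal H^0$. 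Thus $f_{k+1}=-\sum_{\ell=0}^k\omega_{k,\ell}P_{(\ell+1)k}+g$, which already accounts for the $({}^{t}0,{}^{t}\!\Omega_k)$-type shifted copy of $\Omega_k$ appearing in the claimed formula (the index shift $P_{\ell k}\mapsto P_{(\ell+1)k}$ is exactly what moves $\Omega_k$ down by one slot); it remains to pin down the harmonic correction $g$, which is a multiple of $P_{0k}$ in the symmetric cases $k\in\{1,2\}$ and of $P_{03}$ in the antisymmetric case $k=3$ (because $f_{k+1}$ inherits the symmetry type of $p_k$, hence of $P_{\cdot k}$).

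To determine $g$, I would impose the remaining constraint that has not yet been used: $f_{k+1}|_{V_0}=0$, equivalently, for $k=3$, that the appropriate boundary datum of $f_{k+1}$ vanishes. Evaluating the identity $f_{k+1}=-\sum_{\ell}\omega_{k,\ell}P_{(\ell+1)k}+\zeta_k P_{0k}$ against the boundary functional dual to $P_{0k}$ — for $k=3$ this is $\partial_T(\cdot)(q_0)$, for which $\partial_TP_{(\ell+1)3}(q_0)=\gamma_{\ell+1}$ and $\partial_TP_{03}(q_0)=\gamma_0$ in the notation of Theorem~B — forces $0=-\sum_{\ell=0}^k\omega_{k,\ell}\gamma_{\ell+1}+\zeta_k\gamma_0$, i.e. $\zeta_k=\tfrac1{\gamma_0}\sum_{\ell=0}^k\omega_{k,\ell}\gamma_{\ell+1}$; a sign bookkeeping check against the statement's $\zeta_k=-\tfrac1{\gamma_0}\sum\omega_{k,\ell}\gamma_{\ell+1}$ will settle whether the correction term is $+\zeta_kP_{0k}$ or $-\zeta_kP_{0k}$, and I would carry the sign through consistently from $-\Delta f_{k+1}=p_k$. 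Assembling: $\Omega(f_{k+1})=({}^{t}\zeta_k,{}^{t}\!\Omega_k)$, and adding the two scalar-multiple shifts computed above yields~\eqref{matrixrec}. The main obstacle is purely bookkeeping — getting the index shifts, the padding positions, and the signs mutually consistent with the stated convention $\Omega_j(j)=1$ and with the three cases $k=1,2,3$ handled uniformly — rather than anything conceptually hard; the one genuine input is the Green's-function identity $-\Delta f_{k+1}=p_k$ together with $\Delta P_{(\ell+1)k}=P_{\ell k}$, which reduces everything to linear algebra on $\mathcal H^0$.
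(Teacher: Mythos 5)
Your overall strategy is the same as the paper's: the entire content of the theorem is the identity $f_{k+1}=\zeta_k P_{0k}+\sum_{\ell=0}^k\omega_{k,\ell}P_{(\ell+1)k}$, obtained from $\Delta P_{(\ell+1)k}=P_{\ell k}$ plus a harmonic correction pinned down by the Dirichlet condition $f_{k+1}|_{V_0}=0$; the paper simply does this monomial by monomial, writing $-\int G(x,y)P_{\ell k}(y)\,d\mu(y)=P_{(\ell+1)k}-\tfrac{\gamma_{\ell+1}}{\gamma_0}P_{0k}$ and summing against $\omega_{k,\ell}$. However, two of your concrete steps fail as written. First, the sign: since $u=\int G(\cdot,y)h(y)\,d\mu(y)$ solves $-\Delta u=h$, the minus sign already built into the definition \eqref{fk} cancels it, so $\Delta f_{k+1}=+p_k$, not $-p_k$. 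With your sign you get $f_{k+1}=-\sum_\ell\omega_{k,\ell}P_{(\ell+1)k}+g$, so the shifted block of the coefficient vector would be $-\Omega_k$, contradicting \eqref{matrixrec}; your hedge about carrying the sign through consistently covers only the $\zeta_k$ term, not this block, so the error is not self-correcting.

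Second, the functional you propose for determining the harmonic correction does not produce the equation you write down. By \eqref{defmono}, $\partial_T P_{j3}(q_0)=\delta_{j,0}$, so applying $\partial_T(\cdot)(q_0)$ to your identity yields only $\partial_T f_{k+1}(q_0)=\zeta_k$, which is useless because $\partial_T f_{k+1}(q_0)$ is not a prescribed datum (only the values of $f_{k+1}$ on $V_0$ are). Moreover, in Theorem B the quantity $\gamma_j$ is $P_{j3}(q_1)$, not a tangential derivative at $q_0$. The correct move is to evaluate at $q_1$ and use $f_{k+1}(q_1)=0$: this gives $0=\zeta_k\gamma_0+\sum_{\ell=0}^k\omega_{k,\ell}\gamma_{\ell+1}$, hence exactly $\zeta_k=-\tfrac{1}{\gamma_0}\sum_{\ell}\omega_{k,\ell}\gamma_{\ell+1}$ once the first sign is repaired. (A smaller caveat: your claim that the harmonic correction is a scalar multiple of $P_{0k}$ is clear only in the antisymmetric case, where the antisymmetric harmonic space is one-dimensional; for the symmetric families the relevant harmonic space is two-dimensional, and the formula with $\gamma_\ell$ as in \eqref{recursion} is really the $i=3$ statement.) Your translation of $-b_kp_k-c_kp_{k-1}$ into zero-padded coefficient vectors is correct and matches the paper.
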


\begin{proof}
Using ~\eqref{fk} we can write 
\begin{eqnarray*}
f_{j+1} &=& -\int G(x,y)p_{j}(x) d\mu(y) = -\int G(x,y)\sum_{\ell=0}^{j} \omega_{j,\ell}P_{\ell,k} d\mu(y) \\
 &=&  \sum_{\ell=0}^{j} \omega_{j,\ell} \bigg(-\int G(x,y)P_{\ell,k} d\mu(y)\bigg) = \sum_{\ell=0}^{j} \omega_{j,\ell} \left( P_{\ell+1,j} - \frac{\gamma_{\ell+1}}{\gamma_0} P_{0,k}   \right) \\
 &=&  \zeta_{j} P_{0,k} + \sum_{\ell=0}^{j} \omega_{j,\ell} P_{\ell+1,k}  \\  
&=& \left( \begin{array}{c} {\zeta}_{j} \\ {\omega}_{j,0} \\ \vdots \\ {\omega}_{j,j-2}\\ {\omega}_{j,j-1}\\ 1\end{array}\right)
\end{eqnarray*}
\end{proof}

We now derive a version of the Christoffel-Darboux formulas for the OP $\{Q_k\}$. As an application, we use these formulas to find the expansion of $\Delta Q_{k}$ in terms of $\{Q_{l}\}_{l=0}^{k-1}$. 

\begin{thm}\label{christoffel-darboux} Let $N\geq 0$,  $x, y \in SG$, and define $K_{N}(x, y)=\sum_{k=0}^{N}Q_{k}(x)Q_{k}(y)$. Using the notations of the last theorem we have 
\begin{equation}\label{cd1}
K_{N}(x, y)=\sqrt{c_{N+1}}[Q_{N}(x)\Delta Q_{N+1}(y) - Q_{N+1}(x)\Delta Q_{N}(y)] + \sum_{k=0}^{N}\tilde{f}_{k+1}(x)\Delta Q_{k}(y).
\end{equation}
In particular, for each $x \in SG$ and $N\geq 0$, 
\begin{equation}\label{cd2}
K_{N}(x,x)= \sqrt{c_{N+1}}[Q_{N}(x)\Delta Q_{N+1}(x) - Q_{N+1}(x)\Delta Q_{N}(x)] + \sum_{k=0}^{N}\tilde{f}_{k+1}(x)\Delta Q_{k}(x).
\end{equation}
\end{thm}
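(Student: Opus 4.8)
The plan is to adapt the classical Christoffel--Darboux argument to the $SG$ setting, using the three-term recurrence of Theorem~\ref{main2} in place of the classical one and exploiting the identity $\Delta \tilde f_{k+1} = -Q_k$, which follows immediately from~\eqref{tildefk} since $G$ is the Green's function for $-\Delta$ with Dirichlet boundary conditions. The starting point is the recurrence
\begin{equation*}
\sqrt{c_{k+1}}\,Q_{k+1}(y) = \tilde f_{k+1}(y) - b_k Q_k(y) - \sqrt{c_k}\,Q_{k-1}(y),
\end{equation*}
valid for $k\geq 0$ with $Q_{-1}=0$. Applying the Laplacian in the variable $y$ and using $\Delta \tilde f_{k+1} = -Q_k$ gives
\begin{equation*}
\sqrt{c_{k+1}}\,\Delta Q_{k+1}(y) = -Q_k(y) - b_k \Delta Q_k(y) - \sqrt{c_k}\,\Delta Q_{k-1}(y),
\end{equation*}
so that
\begin{equation*}
Q_k(y) = -\sqrt{c_{k+1}}\,\Delta Q_{k+1}(y) - b_k \Delta Q_k(y) - \sqrt{c_k}\,\Delta Q_{k-1}(y).
\end{equation*}

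Next I would multiply this last expression by $Q_k(x)$ and sum over $k=0,\dots,N$ to obtain
\begin{equation*}
K_N(x,y) = \sum_{k=0}^N Q_k(x)Q_k(y) = -\sum_{k=0}^N Q_k(x)\Bigl(\sqrt{c_{k+1}}\,\Delta Q_{k+1}(y) + b_k \Delta Q_k(y) + \sqrt{c_k}\,\Delta Q_{k-1}(y)\Bigr).
\end{equation*}
The idea is then to reorganize the right-hand side so that the coefficient of each $\Delta Q_m(y)$, for $1\le m\le N-1$, is recognized — via the recurrence applied in the variable $x$ — as a multiple of $\tilde f_{m+1}(x)$, with the boundary terms at $m=N$ and $m=N+1$ producing the explicit bracket in~\eqref{cd1}. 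Concretely, collecting the coefficient of $\Delta Q_m(y)$ for $1\le m\le N-1$ gives $-\bigl(\sqrt{c_m}\,Q_{m-1}(x) + b_m Q_m(x) + \sqrt{c_{m+1}}\,Q_{m+1}(x)\bigr)$, and by Theorem~\ref{main2} (the recurrence in $x$) this equals $-\tilde f_{m+1}(x)$; handling the edge indices $m=0$, $m=N$, $m=N+1$ separately accounts for the telescoping remainder $\sqrt{c_{N+1}}\,[Q_N(x)\Delta Q_{N+1}(y) - Q_{N+1}(x)\Delta Q_N(y)]$ and contributes the missing $\tilde f_{k+1}(x)\Delta Q_k(x)$-type terms at $k=N$ (and $k=0$, where $\Delta Q_{-1}=0$ and the $\tilde f_1$ term appears cleanly). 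This is the standard ``summation by parts / reindexing'' manoeuvre, and after carefully matching indices one arrives at~\eqref{cd1}. Finally, setting $y=x$ yields~\eqref{cd2} at once.

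The main obstacle is purely bookkeeping: because the $SG$ recurrence is three-term with \emph{asymmetric} shifts ($\sqrt{c_{k+1}}$ multiplies $Q_{k+1}$ while $\sqrt{c_k}$ multiplies $Q_{k-1}$), the reindexing of the triple sum $\sum_k Q_k(x)[\sqrt{c_{k+1}}\Delta Q_{k+1}(y) + b_k\Delta Q_k(y) + \sqrt{c_k}\Delta Q_{k-1}(y)]$ must be done with care to ensure the interior coefficients collapse exactly to $\tilde f_{m+1}(x)$ and that the surviving boundary terms are precisely those displayed, with no leftover $Q_{-1}$, $\Delta Q_{-1}$, $c_0$, or $Q_{N+2}$ contributions. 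One should fix the conventions $Q_{-1}=0$, $\tilde f_0 = 0$ and check the cases $k=0$ and $k=N$ by hand; everything else is routine algebra. No analytic input beyond $\Delta\tilde f_{k+1} = -Q_k$ and Theorem~\ref{main2} is needed.
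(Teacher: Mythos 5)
Your overall strategy is viable and is organized differently from the paper's: the paper forms the antisymmetrized products $\tilde f_{k+1}(x)Q_k(y)-\tilde f_{k+1}(y)Q_k(x)$, telescopes them over $k$ (the $b_k$ terms cancel, leaving only the $\sqrt{c_{N+1}}$ boundary term), and only then applies $\Delta_y$, using $-\tilde f_{k+1}(y)=\int G(y,z)Q_k(z)\,d\mu(z)$ to convert $\sum_k\tilde f_{k+1}(y)Q_k(x)$ into $-\int G(y,z)K_N(z,x)\,d\mu(z)$, whose Laplacian reproduces $K_N$. You instead apply $\Delta$ directly to the recurrence to express $Q_k(y)$ through $\Delta Q_{k-1},\Delta Q_k,\Delta Q_{k+1}$, substitute into $K_N$, and reindex so that the recurrence in $x$ reassembles the interior coefficients into $\tilde f_{m+1}(x)$. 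I checked the bookkeeping: with $Q_{-1}=0$ and $\Delta Q_0=0$, the coefficient of $\Delta Q_m(y)$ is $\tilde f_{m+1}(x)$ for $0\le m\le N-1$, is $\tilde f_{N+1}(x)-\sqrt{c_{N+1}}Q_{N+1}(x)$ at $m=N$, and is $\sqrt{c_{N+1}}Q_N(x)$ at $m=N+1$, which is exactly \eqref{cd1}. So the route is a legitimate, slightly more mechanical alternative to the paper's telescoping, using the same two ingredients.

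There is, however, one concrete error: the sign of $\Delta\tilde f_{k+1}$. Since $u=\int G(\cdot,y)h(y)\,d\mu(y)$ solves $-\Delta u=h$, and $\tilde f_{k+1}=-\int G(\cdot,y)Q_k(y)\,d\mu(y)$ by \eqref{tildefk}, one gets $\Delta\tilde f_{k+1}=+Q_k$, not $-Q_k$. (This is the convention the paper uses implicitly, e.g.\ in the proof of Theorem~\ref{pkjet}, where $-\int G(x,y)P_{\ell,k}(y)\,d\mu(y)=P_{\ell+1,k}-\tfrac{\gamma_{\ell+1}}{\gamma_0}P_{0,k}$, whose Laplacian is $P_{\ell,k}$.) With your sign, you obtain $Q_k(y)=-\sqrt{c_{k+1}}\Delta Q_{k+1}(y)-b_k\Delta Q_k(y)-\sqrt{c_k}\Delta Q_{k-1}(y)$, and carrying this through your own reindexing yields the \emph{negative} of the right-hand side of \eqref{cd1}; this is visibly inconsistent at $y=x$, where $K_N(x,x)=\sum_{k=0}^N Q_k(x)^2\ge 0$. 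Replace $\Delta\tilde f_{k+1}=-Q_k$ by $\Delta\tilde f_{k+1}=Q_k$, drop the resulting overall minus sign, and your argument goes through as described.
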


\begin{proof}
Let $k \in \{0, 1, \hdots, N\}$, and $x, y \in SG$. By~\eqref{threetermQk} we have 
\begin{align*}
\tilde{f}_{k+1}(x)Q_{k}(y) - \tilde{f}_{k+1}(y)Q_{k}(x)& = \sqrt{c_{k+1}}[Q_{k+1}(x)Q_{k}(y)-Q_{k+1}(y)Q_{k}(x)]-\\
&\qquad \qquad \sqrt{c_{k}}[Q_{k}(x)Q_{k-1}(y)-Q_{k}(y)Q_{k-1}(x)].
\end{align*} Therefore, summing both sides we obtain
\begin{equation}\label{cde1}
\sum_{k=0}^{N}[\tilde{f}_{k+1}(x)Q_{k}(y) - \tilde{f}_{k+1}(y)Q_{k}(x)]=\sqrt{c_{N+1}}[Q_{N+1}(x)Q_{N}(y)-Q_{N+1}(y)Q_{N}(x)].
\end{equation} Now, observe that 
\begin{align*}
\sum_{k=0}^{N}[\tilde{f}_{k+1}(x)Q_{k}(y) - \tilde{f}_{k+1}(y)Q_{k}(x)] & =&  \sum_{k=0}^{N}\tilde{f}_{k+1}(x)Q_{k}(y) + \int_{SG}\sum_{k=0}^{N}G(y, z)Q_{k}(z)Q_{k}(x)dz\\
& =&  \sum_{k=0}^{N}\tilde{f}_{k+1}(x)Q_{k}(y) + \int_{SG}G(y, z)K_{N}(z, x)dz
\end{align*}
Using this last equation and taking the Laplacian with respect to $y$ of both sides of~\eqref{cde1} yields,
\begin{align*}
\Delta_{y}\bigg( \sum_{k=0}^{N}[\tilde{f}_{k+1}(x)Q_{k}(y) - \tilde{f}_{k+1}(y)Q_{k}(x)]\bigg) &= \sqrt{c_{N+1}}[Q_{N+1}(x)\Delta Q_{N}(y)- Q_{N}(x)\Delta Q_{N+1}(y))\\
& = \sum_{k=0}^{N}\tilde{f}_{k+1}(x)\Delta Q_{k}(y)- K_{N}(x,y)
\end{align*} Consequently,
\begin{equation*}
K_{N}(x,y)= \sqrt{c_{N+1}}[Q_{N}(x)\Delta Q_{N+1}(y) - Q_{N+1}(x)\Delta Q_{N}(y)] + \sum_{k=0}^{N}\tilde{f}_{k+1}(x)\Delta Q_{k}(y).
\end{equation*}
\end{proof}

We now use this theorem to find the coordinates of $\Delta Q_k$ in terms of lower order polynomials. That is since, $\Delta Q_k \in span\{Q_{l}\}_{l=0}^{k-1}$, we have $$\Delta Q_{k}(x)=\sum_{l=0}^{k-1}A_{l}^{(k)}Q_{l}(x),$$  where $A_{l}^{(k)} =\ip{\Delta Q_k}{Q_l}$. These coefficients can be computed recursively using Theorem~\ref{christoffel-darboux}. More specifically, we have

\begin{cor}\label{cd-cor} Let  $k\geq 1$, and assume $\Delta Q_{k}(x)=\sum_{\ell=0}^{k-1}A^{(k)}_{\ell}Q_{\ell}(x).$ Then,  
$A^{(k)}_{k-1}=\tfrac{1}{\sqrt{c_{k}}},$ and for $\ell=0, 1, \cdots, k-2$ we have $$A^{(k)}_{\ell}=-A^{(k-1)}_{\ell}\tfrac{b_{k-1}}{\sqrt{c_{k}}},$$ with the initial condition $\Delta Q_{1}=A^{(1)}_{0}Q_0$ where  $A^{(1)}_{0}=\tfrac{1}{\sqrt{c_{1}}}.$

Furthermore, for each $k\geq 1$ and $\ell=0, 1, \hdots, k-1$, $A_{\ell}^{(k)}>0.$
\end{cor}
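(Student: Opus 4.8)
The plan is to extract the recursion for the coefficients $A_\ell^{(k)}$ directly from the Christoffel--Darboux identity~\eqref{cd1}, and then deduce positivity by induction using the sign information on $b_k$ and $c_k$ already established in Theorem~\ref{boundbkck}.

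First I would apply the Laplacian in $y$ to both sides of~\eqref{cd1}. On the left, $\Delta_y K_N(x,y) = \sum_{k=0}^N Q_k(x)\,\Delta Q_k(y)$. On the right, the term $\sum_{k=0}^N \tilde f_{k+1}(x)\,\Delta Q_k(y)$ has $\Delta_y\tilde f_{k+1}(y) = -Q_k(y)$ by the defining property~\eqref{tildefk} of $\tilde f_{k+1}$ as a Green's potential (so that $-\Delta \tilde f_{k+1} = Q_k$), while $\Delta_y$ applied to $\sqrt{c_{N+1}}[Q_N(x)\Delta Q_{N+1}(y) - Q_{N+1}(x)\Delta Q_N(y)]$ is immediate. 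Alternatively, and more cleanly, I would simply take the known expansion $\Delta Q_k(x) = \sum_{\ell=0}^{k-1} A_\ell^{(k)} Q_\ell(x)$ and substitute it into the three-term recurrence~\eqref{threetermQk} written in the form
\begin{equation*}
\sqrt{c_{k+1}}\,Q_{k+1}(x) = \tilde f_{k+1}(x) - b_k Q_k(x) - \sqrt{c_k}\,Q_{k-1}(x),
\end{equation*}
then apply $\Delta$ to both sides and use $\Delta \tilde f_{k+1} = -Q_k$. This gives
\begin{equation*}
\sqrt{c_{k+1}}\,\Delta Q_{k+1}(x) = -Q_k(x) - b_k\,\Delta Q_k(x) - \sqrt{c_k}\,\Delta Q_{k-1}(x).
\end{equation*}
Expanding $\Delta Q_{k+1}$, $\Delta Q_k$, $\Delta Q_{k-1}$ in the orthonormal basis $\{Q_\ell\}$ and matching coefficients of $Q_\ell$ yields the stated recursion: the coefficient of $Q_k$ forces $\sqrt{c_{k+1}}\,A_k^{(k+1)} = -1 + 0 + 0$ — here I need to double-check the sign convention, since the statement gives $A_{k-1}^{(k)} = 1/\sqrt{c_k} > 0$; the resolution is that $f_{k+1}$ in~\eqref{fk} is defined with a minus sign, so $-\Delta f_{k+1} = p_k$ wait, $\Delta f_{k+1}(x) = -\Delta\!\int G(x,y)p_k(y)d\mu(y) = p_k(x)$ since $-\Delta_x \int G(x,y)f(y)d\mu(y) = f(x)$, hence $\Delta\tilde f_{k+1} = Q_k$ (no extra sign), giving $\sqrt{c_{k+1}}\,A_k^{(k+1)} = 1$, i.e. $A_{k-1}^{(k)} = 1/\sqrt{c_k}$ after reindexing. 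For $\ell \le k-1$, matching coefficients of $Q_\ell$ gives $\sqrt{c_{k+1}}\,A_\ell^{(k+1)} = -b_k A_\ell^{(k)}$ (the $Q_{k-1}$-expansion of $\Delta Q_{k-1}$ only reaches $Q_{k-2}$, so it does not contribute for $\ell = k-1$, and for $\ell \le k-2$ one must also account for its term — I should be careful here, but since the claim's recursion $A_\ell^{(k)} = -A_\ell^{(k-1)} b_{k-1}/\sqrt{c_k}$ has no $c$-term, presumably the $\sqrt{c_k}\Delta Q_{k-1}$ contribution is being absorbed or the indexing in the corollary is such that it vanishes; I would verify this by tracking the top index of $\Delta Q_{k-1}$ carefully). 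Reindexing $k+1 \to k$ gives exactly the formulas in the statement, with initial condition $\Delta Q_1 = A_0^{(1)} Q_0$ and $A_0^{(1)} = 1/\sqrt{c_1}$ coming from $\Delta Q_1 = Q_0/\sqrt{c_1}$ (using $Q_0$ harmonic, so $\Delta Q_1$ is a multiple of $Q_0$ only — actually $\Delta Q_1$ lies in $\mathcal H^0$).

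For the positivity claim, I would argue by induction on $k$. The base case $A_0^{(1)} = 1/\sqrt{c_1} > 0$ holds since $c_1 > 0$ by~\eqref{boundck}. For the inductive step, assume $A_\ell^{(k-1)} > 0$ for all relevant $\ell$. Then $A_{k-1}^{(k)} = 1/\sqrt{c_k} > 0$ again by $c_k > 0$, and for $\ell \le k-2$ we have $A_\ell^{(k)} = -A_\ell^{(k-1)}\, b_{k-1}/\sqrt{c_k}$, which is positive because $A_\ell^{(k-1)} > 0$ by the inductive hypothesis, $c_k > 0$, and crucially $b_{k-1} < 0$ by~\eqref{boundbk} — so $-b_{k-1} > 0$ and the product of three positive quantities is positive.

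The main obstacle I anticipate is bookkeeping the index ranges correctly when applying $\Delta$ to the three-term recurrence and reading off coefficients — in particular, confirming that the $\sqrt{c_k}\,\Delta Q_{k-1}$ term contributes nothing to the coefficient of $Q_\ell$ in the stated range (equivalently, that the corollary's recursion legitimately omits a $c$-dependent term), and pinning down the sign in $\Delta\tilde f_{k+1} = Q_k$ against the minus sign in the definition~\eqref{tildefk} so that $A_{k-1}^{(k)}$ comes out $+1/\sqrt{c_k}$ rather than $-1/\sqrt{c_k}$. Both are routine once the Green's function identity $-\Delta \int G(x,y)f(y)\,d\mu(y) = f(x)$ is invoked cleanly, but they require care. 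Everything else — the positivity — then follows formally from the already-proven sign facts $b_k < 0 < c_k$.
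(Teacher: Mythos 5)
Your route is genuinely different from the paper's: you apply $\Delta$ directly to the three-term recurrence \eqref{threetermQk}, using $\Delta\tilde{f}_{k+1}=Q_{k}$ (which you chase down correctly from \eqref{tildefk} and the Green's identity $-\Delta\int G(x,y)f(y)\,d\mu(y)=f(x)$), whereas the paper works from the Christoffel--Darboux identities \eqref{cd1}--\eqref{cd2}, integrating them against products $Q_{k}(x)Q_{\ell}(y)$. Your derivation of the top coefficient $A^{(k)}_{k-1}=1/\sqrt{c_{k}}$ is correct, and your positivity induction from $b_{k}<0$ in \eqref{boundbk} and $c_{k}>0$ in \eqref{boundck} is exactly the paper's argument.

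The gap is precisely the point you flagged and then deferred: the contribution of $\sqrt{c_{k}}\,\Delta Q_{k-1}$ does \emph{not} vanish. Matching coefficients in $\sqrt{c_{k+1}}\,\Delta Q_{k+1}=Q_{k}-b_{k}\,\Delta Q_{k}-\sqrt{c_{k}}\,\Delta Q_{k-1}$ against $\Delta Q_{k-1}=\sum_{\ell=0}^{k-2}A^{(k-1)}_{\ell}Q_{\ell}$ gives, for $\ell\le k-2$, $\sqrt{c_{k+1}}\,A^{(k+1)}_{\ell}=-b_{k}A^{(k)}_{\ell}-\sqrt{c_{k}}\,A^{(k-1)}_{\ell}$, and the last term is nonzero (indeed, it is minus a positive quantity if the corollary's own positivity assertion holds). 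After reindexing, your method therefore yields the stated formula only for $\ell=k-1$ and $\ell=k-2$; for $\ell\le k-3$ it produces the additional term $-\sqrt{c_{k-1}}\,A^{(k-2)}_{\ell}/\sqrt{c_{k}}$, the first affected coefficient being $A^{(3)}_{0}$. You cannot assume this is ``absorbed'' or killed by the indexing; you would need to prove it is zero, and it is not. Note that this is not an artifact of your method: the paper only executes the cases $k=1,2$ (where $\Delta Q_{k-2}$ is too short to contribute) and merely describes the general step ``set $N=k$ in \eqref{cd1}, multiply by $Q_{k}(x)Q_{\ell}(y)$ and integrate''; carrying that out produces the same extra term $\langle\tilde{f}_{k},Q_{k}\rangle A^{(k-1)}_{\ell}=\sqrt{c_{k}}\,A^{(k-1)}_{\ell}$. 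With that (negative) term present, the positivity of $A^{(k)}_{\ell}$ for $\ell\le k-3$ also no longer follows from the simple sign count, so your proof as proposed does not close, and the discrepancy with the stated recursion must be resolved rather than set aside.
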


\begin{proof}
Use $N=0$ in~\eqref{cd2}, gives $K_{0}(x,x)=Q_{0}^{2}(x)=\sqrt{c_{1}}Q_{0}(x)\Delta Q_{1}(x)$, and integrating this equality leads to $1=\sqrt{c_{1}}\ip{\Delta Q_{1}}{Q_{0}}$. Hence, $\Delta Q_{1}(x)=A^{(1)}_{0}Q_{0}(x)=\tfrac{1}{\sqrt{c_{1}}}Q_{0}(x)$. 

Next, write $\Delta Q_{2}(x)=A^{(2)}_{1}Q_{1}(x) + A^{(2)}_{0}Q_{0}(x)$. Using  $N=1$ in~\eqref{cd2} gives $$K_{1}(x,x)=\sqrt{c_{2}}[Q_{1}(x)\Delta Q_{2}(x) -Q_{2}(x)\Delta Q_{1}(x)] +\tilde{f}_{2}(x)\Delta Q_{1}(x)$$ where we use the fact that $\Delta Q_0=0$. Integrating this last expression leads to
$$2=\sqrt{c_{2}}[\ip{\Delta Q_{2}}{Q_{1}}-\tfrac{1}{\sqrt{c_{1}}}\ip{Q_{2}}{Q_{0}}]+\tfrac{1}{\sqrt{c_{1}}}\ip{\tilde{f}_{2}}{Q_{0}}=\sqrt{c_{2}} A^{(2)}_{1} +\tfrac{1}{\sqrt{c_{1}}} \sqrt{c_{1}}.$$ Therefore, $A^{(2)}_{1}=\tfrac{1}{\sqrt{c_{2}}}$. 

To compute $A^{(2)}_{0}$ we utilize~\eqref{cd1} with $N=1$. In particular, we multiply $K_{1}(x,y)$ by $Q_{1}(x)Q_{0}(y)$ and integrate the resulting equation with respect to both $x$ and $y$. This will give
$$0=\sqrt{c_{2}}\ip{\Delta Q_{2}}{Q_{0}} +\tfrac{1}{\sqrt{c_{1}}}\ip{\tilde{f}_{2}}{Q_1}\ip{Q_0}{Q_0}=\sqrt{c_2}A^{(2)}_{0} +\tfrac{b_1}{\sqrt{c_1}}.$$
Consequently, $A^{(2)}_{0}=-\tfrac{1}{\sqrt{c_2}}\tfrac{b_1}{\sqrt{c_1}}=-A^{(1)}_{0}\tfrac{b_1}{\sqrt{c_1}}.$ Observe that $\tilde{f}_{2}(x)=\sqrt{c_{2}} Q_{2}(x)+ b_{1} Q_{1}(x) +\sqrt{c_1}Q_{0}(x)$ was used in the above arguments.

Assume now that we have determined the coefficients $\{A^{(k)}_{\ell}\}_{\ell=0}^{k-1}$ of $\Delta Q_{k}$ with respect to the orthonormal set $\{Q_{\ell}\}_{\ell=0}^{k-1}$. We can use an induction argument to find the coordinates of $\Delta Q_{k+1}$ in the orthonormal set $\{Q_{\ell}\}_{\ell=0}^{k}$. Set $N=k$ in\eqref{cd2}, and integrate the resulting equation to obtain $A^{(k+1)}_{k}=\tfrac{1}{\sqrt{c_{k+1}}}$.

 To obtain the remaining coefficients, $A^{(k+1)}_{l}$ for $l=0, 1, \cdots, k-1$, set $N=k$ in~\eqref{cd1}, multiply the resulting  equation by $Q_{k}(x)Q_{l}(y)$ and integrate with respect to both $x$ and $y$.  
 
 To obtain the positivity of the coefficients $A_{\ell}^{(k)}$, we proceed by induction on $k$. Clearly, $A_{0}^{(1)}=\tfrac{1}{\sqrt{c_1}}>0$, and $A_{0}^{(2)}=-A_{0}^{(1)}\tfrac{b_1}{\sqrt{c_2}}>0$ since $b_k<0$ for all $k$. In addition, $A_{1}^{(2)}=\tfrac{1}{\sqrt{c_2}}>0$. For assume, that for $k\geq 2$, and each $\ell=0, 1, \hdots, k-1$, we have $A_{\ell}^{(k)}>0$. Then, $A_{k}^{(k+1)}=\tfrac{1}{\sqrt{c_{k+1}}}>0$ and for each $\ell=0, 1, \hdots, k-1$ we have $A_{\ell}^{(k+1)}=-A_{\ell}^{(k)}\tfrac{b_{k}}{\sqrt{c_{k+1}}}>0$ since $b_k <0$ and $A_{\ell}^{(k)}>0$ by the induction hypothesis. This concludes the proof. 
\end{proof}

\begin{rem} Note that Theorem~\ref{christoffel-darboux} and Corollary~\ref{cd-cor} hold for the symmetric OP $S_k$ defined in Subsection~\ref{subsec4}. 
\end{rem}

We end this section with an investigation of the tri-diagonal symmetric Jacobi matrix corresponding to the OP $\{Q_{k}\}$. 
As in the classical case of OP defined on the real line, each of the families of orthogonal polynomials we constructed above is related to  a tri-diagonal, symmetric Jacobi matrix that we denote $J_n$.  In particular, we can write~\eqref{threetermQk} of Theorem~\ref{main2}  in a matrix form that will involve this Jacobi polynomial. To do this we use the following notations:
$$\tilde{F}_{n}=[\tilde{f}_{0}, \tilde{f}_{1}, \hdots, \tilde{f}_{n-1}]^{T}\qquad Q^{(n)}=[Q_0, Q_1, \hdots, Q_{n-1}]^{T},$$ and 
\[J_n=\left( \begin{array}{ccccccc} b_0& \sqrt{c_1}& 0 & 0& \cdots & 0&0\\ \sqrt{c_1}& b_1 & \sqrt{c_2}& 0 & \cdots & 0 & 0\\ 0& \sqrt{c_2}& b_2& \sqrt{c_3} &\cdots & 0&0\\ \multicolumn{7}{c}{\dotfill}\\ 0& 0 & 0& 0& \cdots &\sqrt{c_{n-1}}& b_{n-1}\end{array} \right) \]
By writing~\eqref{threetermQk} in a matrix-vector form we have for each $n\geq 2$
\begin{equation}\label{jacobieq}
\tilde{F}_{n}(x)=J_{n} Q^{(n)}(x) + \sqrt{c_{n}}Q_{n}(x)e_{n}
\end{equation}
where $e_n=[0, 0, \hdots, 0, 1]^{T}$.

For the OP we consider in this paper, though $J_n$ is invertible for all $n$, its determinant is extremely small. Recall \cite[Theorem 0.9.10]{hojo} that the determinant of a tri-diagonal symmetric Jacobi matrix obeys the following recursion formula. 
Let $D_n=Det(J_n)$, then 
\begin{equation*}
D_{n+1} = b_{n-1} D_n - c_{n-1} D_{n-1}
\end{equation*}
with initial conditions  $D_0 = b_0$ and $D_{-1} = 1$.

 To illustrate the fact that the Jacobi matrices associated to the OP considered here have small determinant, we show below a graph of $\log |D_{n}|$ as a function of $n$. This graph is generated using the antisymmetric OP that will be constructed explicitly in the next section. 

\begin{figure}[h]
\begin{center}
\includegraphics[scale=.5]{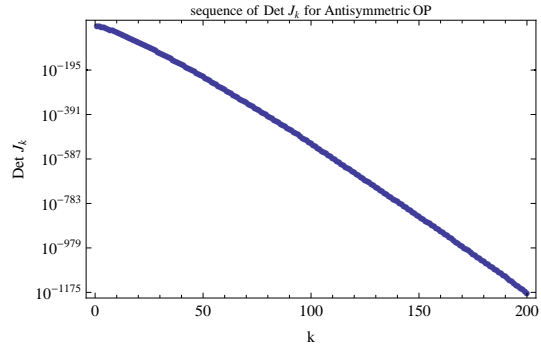}
\caption{Graph of $Log|D_k|$ of the determinants of the Jacobi matrix, $D_k$ }
\label{fig:jacobidet}
\end{center}
\end{figure}

\section{Numerical results}\label{sec3}
We shall now generate graphs of the OP we constructed above starting from two families of monomials on SG. We also present graphs of the sequences appearing in the three-terms recursion formula.  We start by considering a family of antisymmetric monomials. 
\subsection{Antisymmetric orthonormal polynomials}\label{subsec3}

Recall the monomials  $\{P_{j3}\}$ in Definition~\ref{def2} are centered around the point $q_0$ and  antisymmetric with respect to that point.  
We now use these monomials to generate the family of antisymmetric orthogonal polynomials $ \{p_{j}\}_{j=0}^{\infty}$ and their corresponding normalized version $\{Q_{j}\}_{j=0}^{\infty}$. We do this inductively using Theorems~\ref{main1} and ~\ref{pkjet}. In the process, we compute the sequence $d_k, c_k$ and $b_k$. 

More specifically, it is easily seen that $\nm{P_{03}}_{L^{2}}^{2}=d_0^{-2}=3/10$.  Next we use Theorem~\ref{pkjet} to find the auxiliary polynomial $$f_1(x)=-\int_{SG}G(x, y) p_{0}(y)dy = -\int_{SG}G(x, y) P_{0,3}(y)dy .$$  Notice that we impose the condition $f_{j}{|_{V_{0}}}=0$. Thus,  $b_0$ can now be computed, which , together with the polynomial $f_1$, is used to find the polynomial $p_1$. Next, $f_2$ is computed again using Theorem~\ref{pkjet}, from which one gets $d_1$. The next step is to compute $c_1$. Using this, $b_1$ and the polynomial $p_2$ are calculated yielding $c_2$. Continuing these recursions, we construct $p_k$, $k\geq 2$ and all the related sequences. 

To better display the results of these computations, we use a logarithmic scale to plot the numerical sequences obtained above. In particular, we plot  $\log (\nm{p_{k}}_{L^{2}}^{-2})=\log (d_{k}^{2})$, $\log (-b_k)$, and  $\log c_{k}$ versus $k$. 
These graphs are displayed in Figure~\ref{fig:dk}. In particular, these graphs illustrate Theorems~\ref{nmpk} and~\ref{boundbkck}. That is $\nm{p_{k}}_{L^{2}}^{-2}$ tends to infinity exponentially fast. In addition the graphs of $\log (-b_k)$ and $\log c_k$ suggest that $$c_k < -b_k\, \textrm{for}\,  k>k_0$$ for some $k_0$. This seems to be the case for $k<200$, but we do not have data beyond this range of $k$ to confirm or infirm the observation.

\begin{figure}[htp]
\begin{center}
\includegraphics[scale=.5]{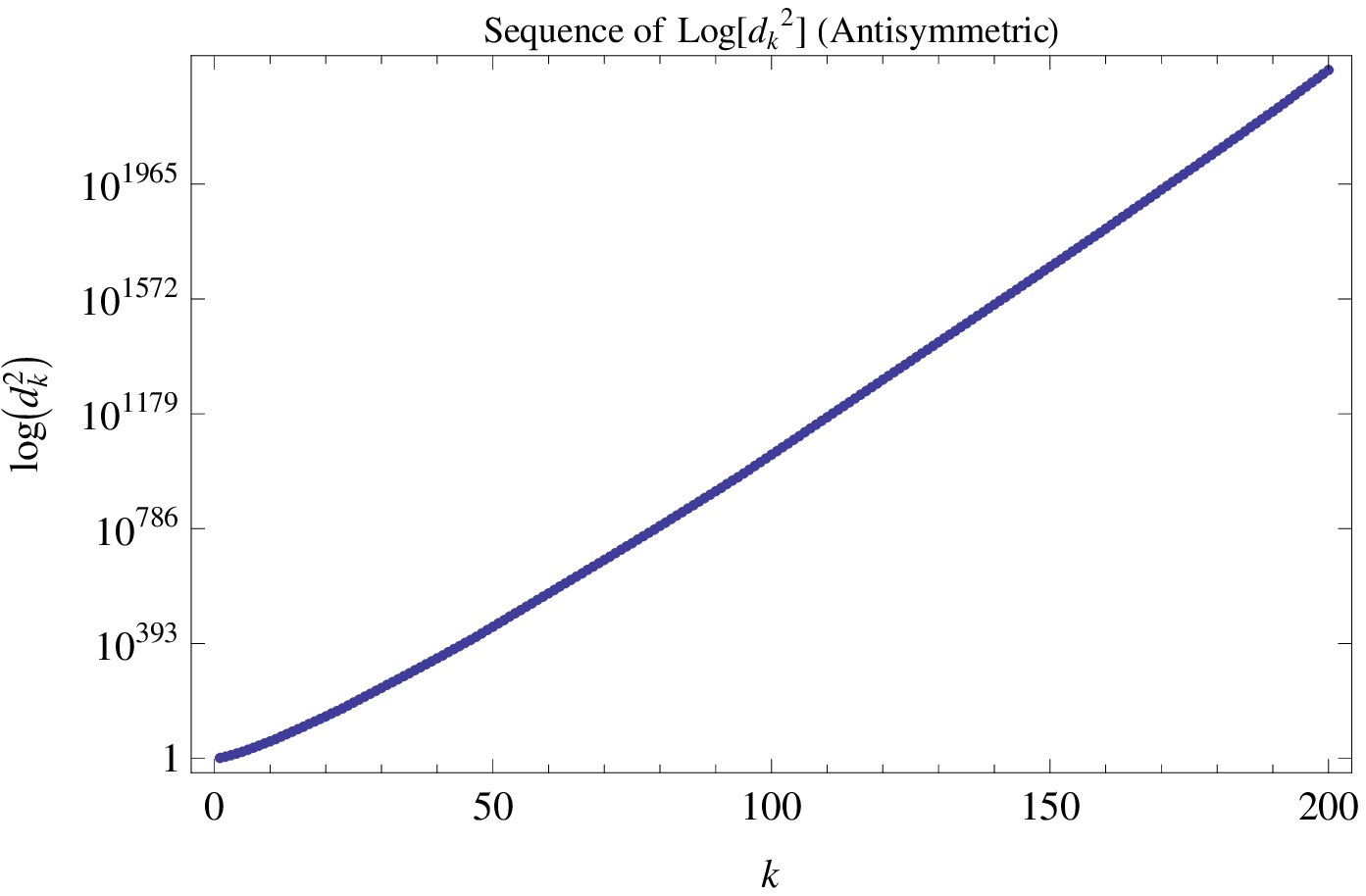} 
\includegraphics[scale=.5]{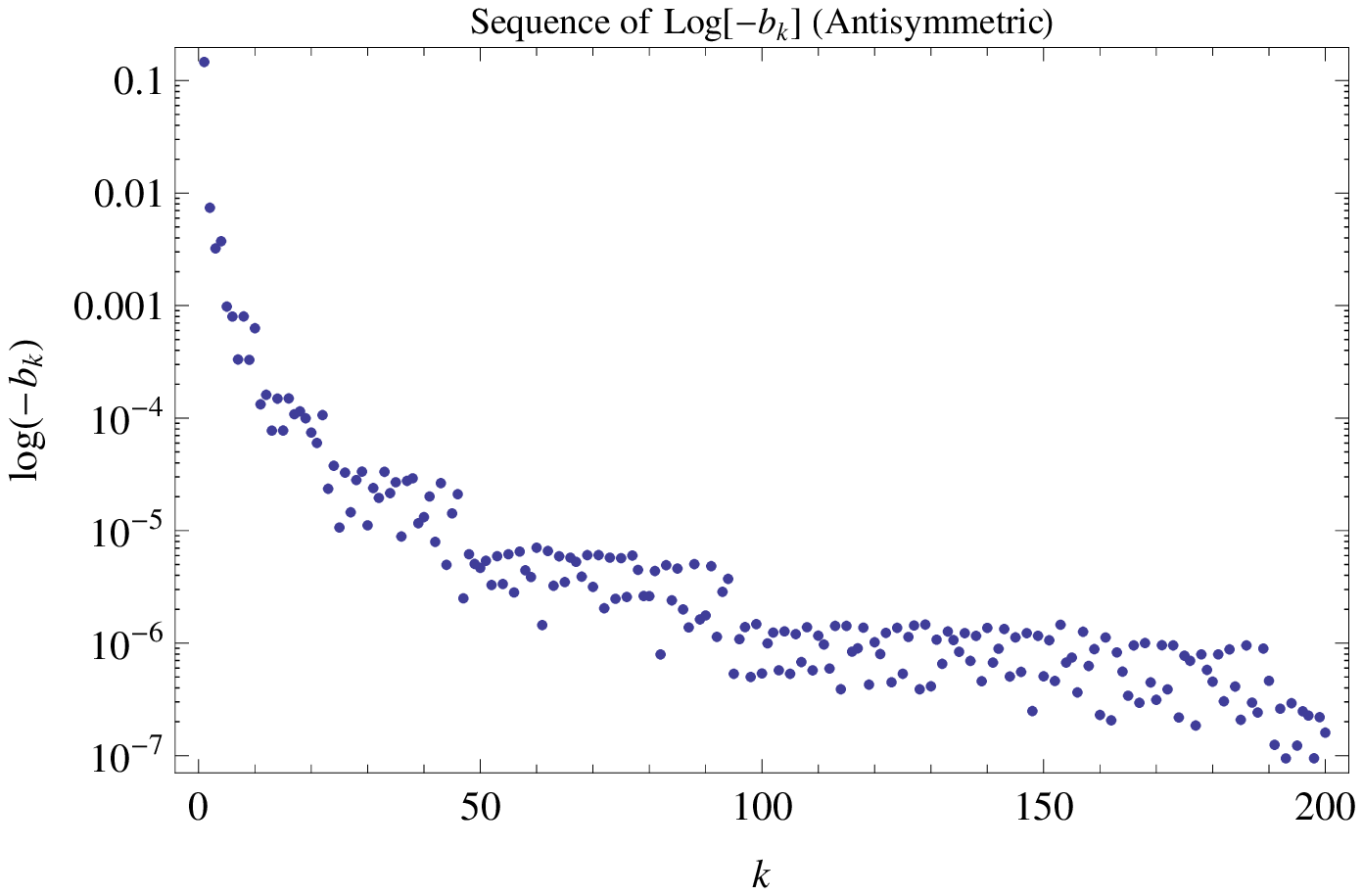}
\includegraphics[scale=.5]{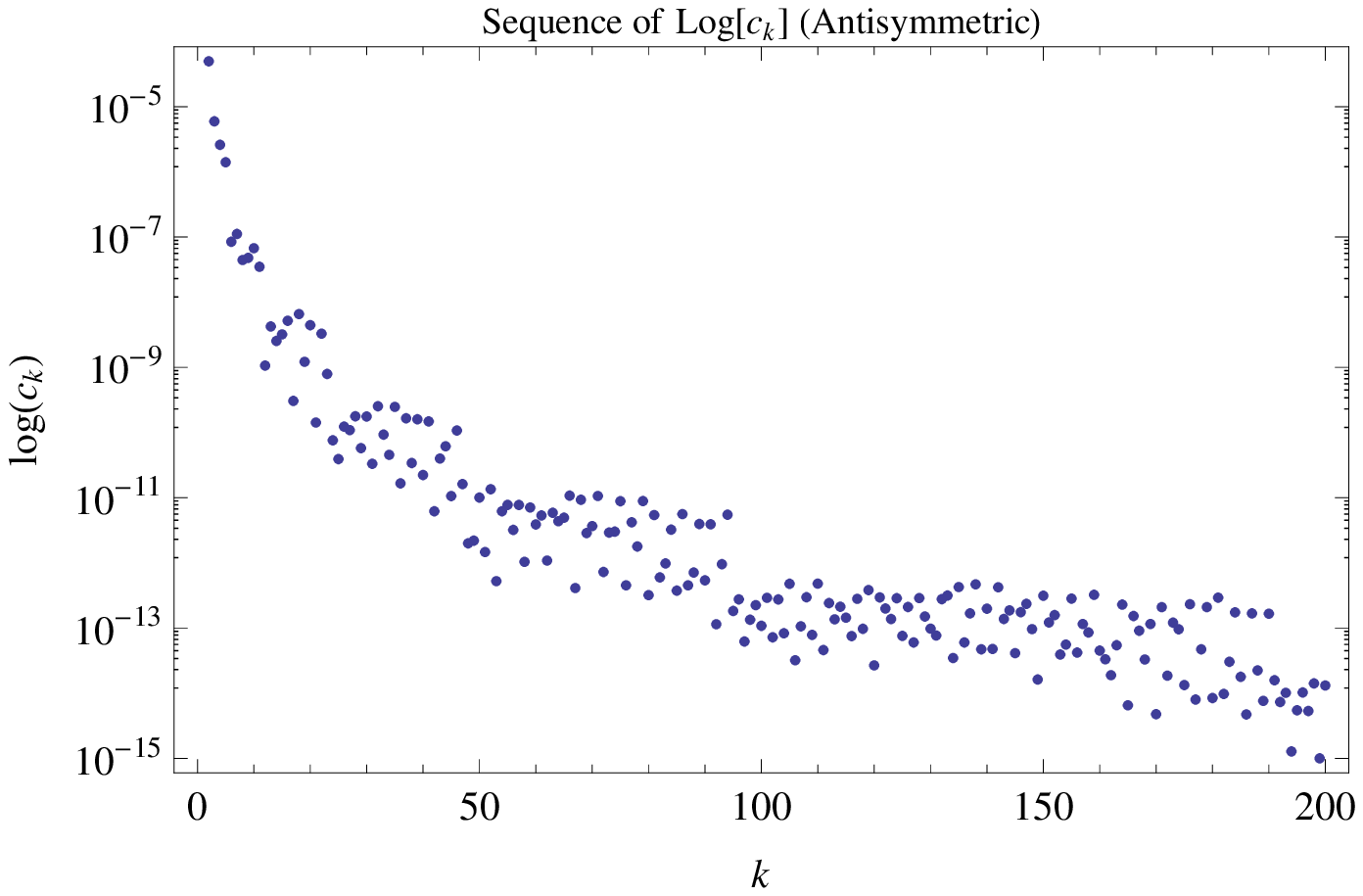}
\caption{Plots of the $\log d_k^2$, $\log (-b_k)$, and $\log c_k$ in log scale for the antisymmetric OP $p_k$.}
\label{fig:dk}
\end{center}
\end{figure}

In Figure~\ref{fig:6aop}, we plot the orthonormal polynomials $Q_j=d_{j}p_{j}$ for $j=0, 3, 4, 7$, on $\Gamma_7$ the level $7$ approximation to $SG$. To do this, we use the recursion formula given by Theorem~\ref{pkjet} along with the second half of the representation of $p_j$ given in~\eqref{antisymop}, as well as the  graphs of the monomials $P_{j,3}$ which were given in~\cite{nsty1}. Moreover,  most of the computations have been carried out  to arbitrary precision since the coefficients $\omega_{j,l}$ are very small. We refer to~Table \ref{tab:pk} for approximate values of these coefficients for $p_j$, $j=0, 1, \hdots, 6$. 

The fact that $Q_k$ are antisymmetric  is apparent from Figure~\ref{fig:6aop}. In addition, we observe that for $i=1, 2$,  $|Q_k (q_{i})|$ is larger compare to $|Q_k(x)|$ for any $x \in SG\setminus \{q_1, q_2\}$. This is consistent with the behavior of classical OP such as the Legendre polynomial on $[0, 1]$; see \cite{hos10}.

\begin{table}[htbp]
  \centering
  \caption{$p_k$ and their coefficients $\omega_{j\ell}$}
    \begin{tabular}{rrrrrrrr}
          & $P_{0,3}$    & $P_{1,3}$    & $P_{2,3}$    & $P_{3,3}$    & $P_{4,3}$    & $P_{5,3}$    & $P_{6,3}$ \\
    $p_0$    & 1     & 0     & 0     & 0     & 0     & 0     & 0 \\
    $p_1$    & -2.04E-02 & 1     & 0     & 0     & 0     & 0     & 0 \\
    $p_2$    & 1.08E-04 & -1.30E-02 & 1     & 0     & 0     & 0     & 0 \\
    $p_3$    & -3.23E-07 & 6.03E-05 & -9.74E-03 & 1     & 0     & 0     & 0 \\
    $p_4$    & 1.99E-10 & -6.41E-08 & 2.14E-05 & -6.01E-03 & 1     & 0     & 0 \\
    $p_5$    & -8.20E-14 & 5.16E-11 & -2.94E-08 & 1.41E-05 & -5.03E-03 & 1     & 0 \\
    $p_6$    & 4.43E-17 & -3.53E-14 & 2.63E-11 & -1.77E-08 & 9.99E-06 & -4.23E-03 & 1 \\    \end{tabular}
  \label{tab:pk}
\end{table}

\begin{figure}[hbp]
\begin{center}
\includegraphics[scale=.33]{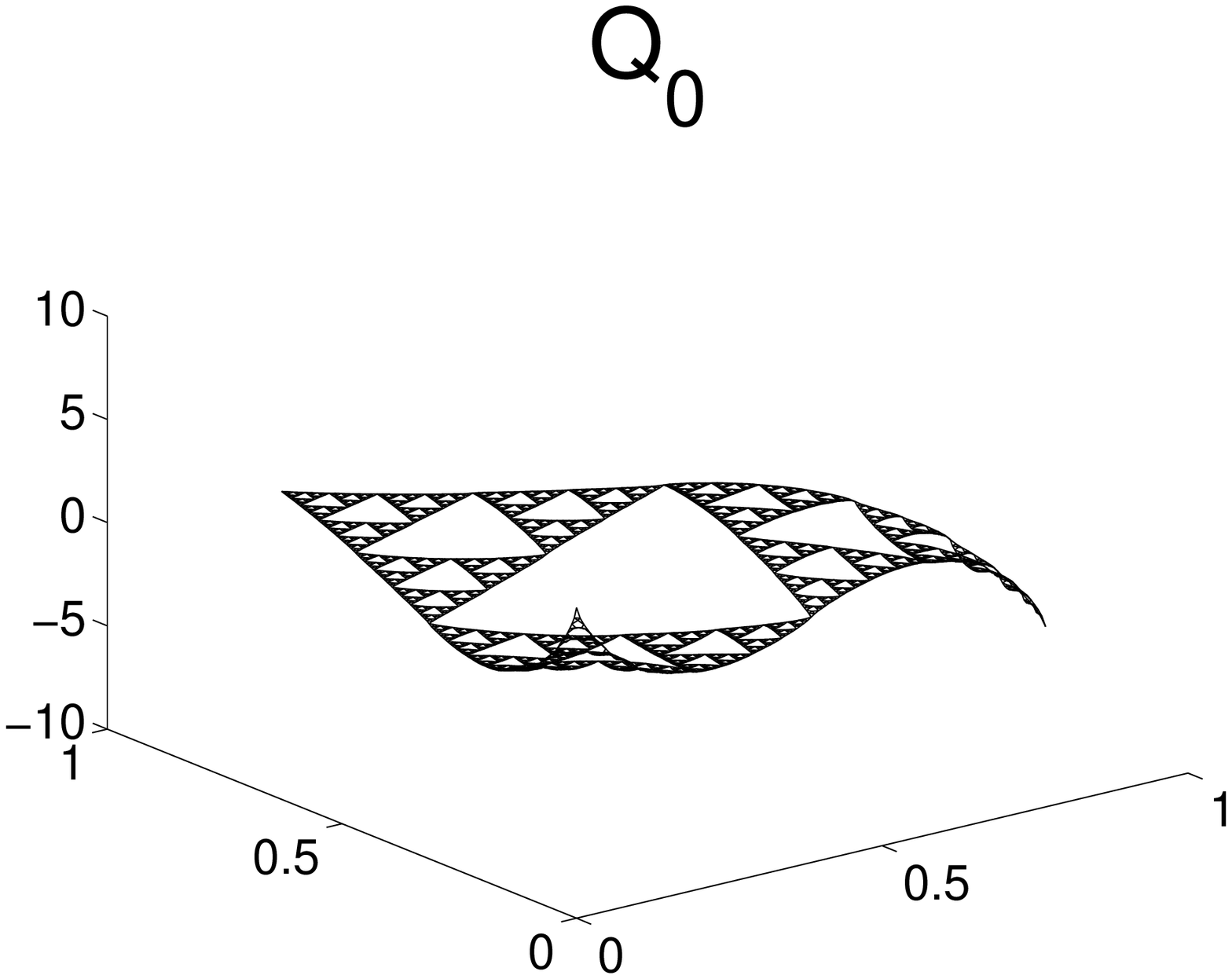} 
\includegraphics[scale=.33]{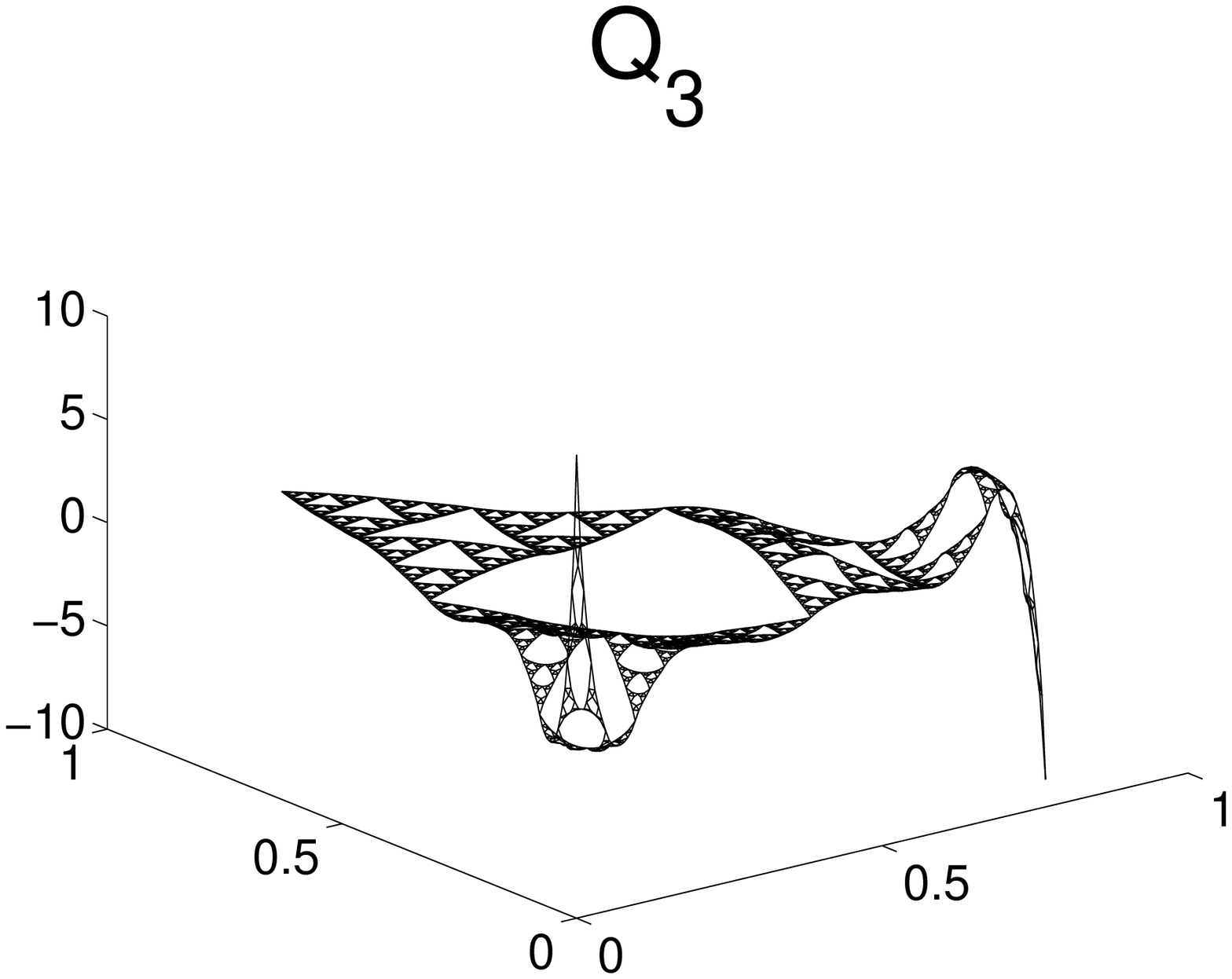}
\includegraphics[scale=.33]{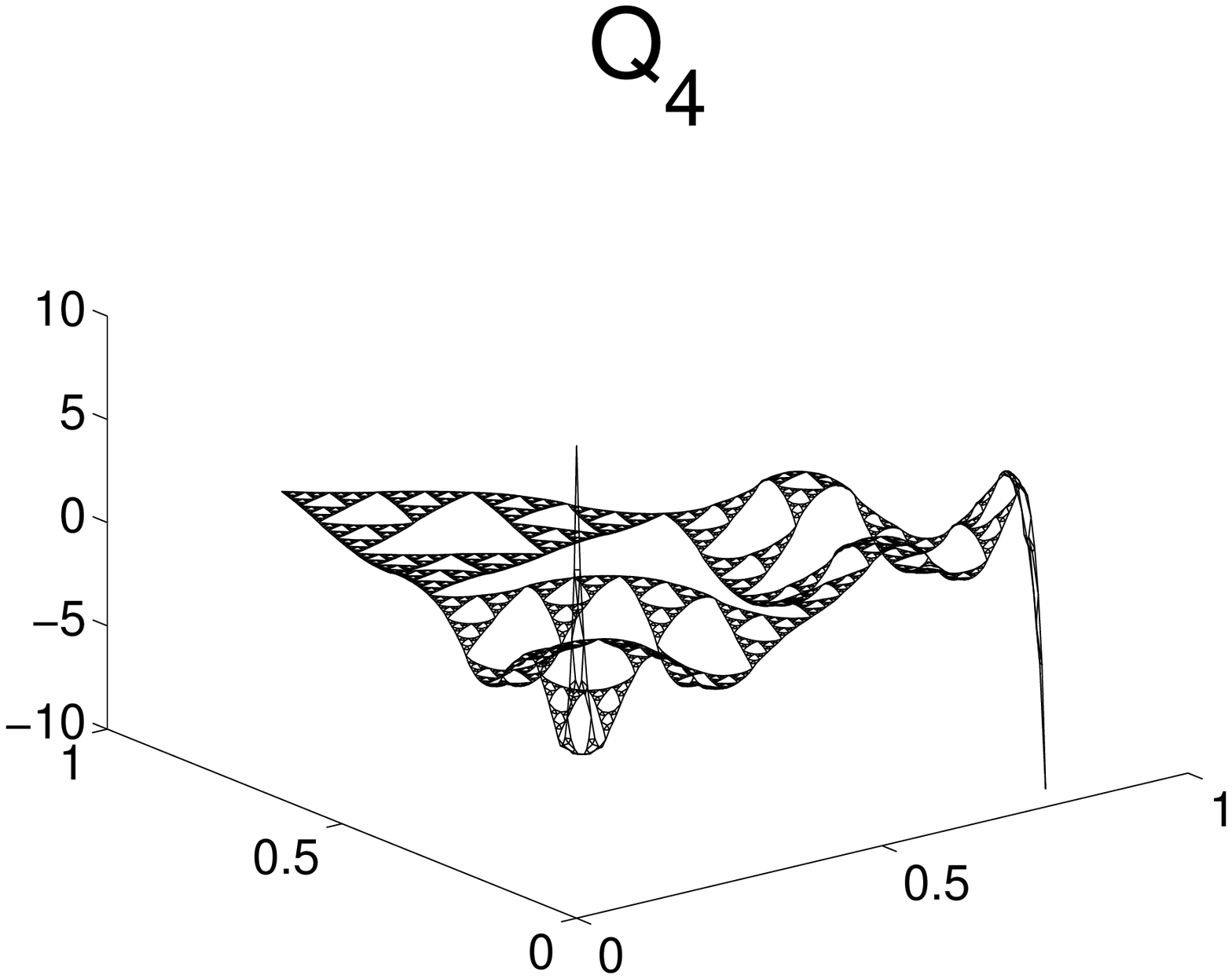} 
\includegraphics[scale=.33]{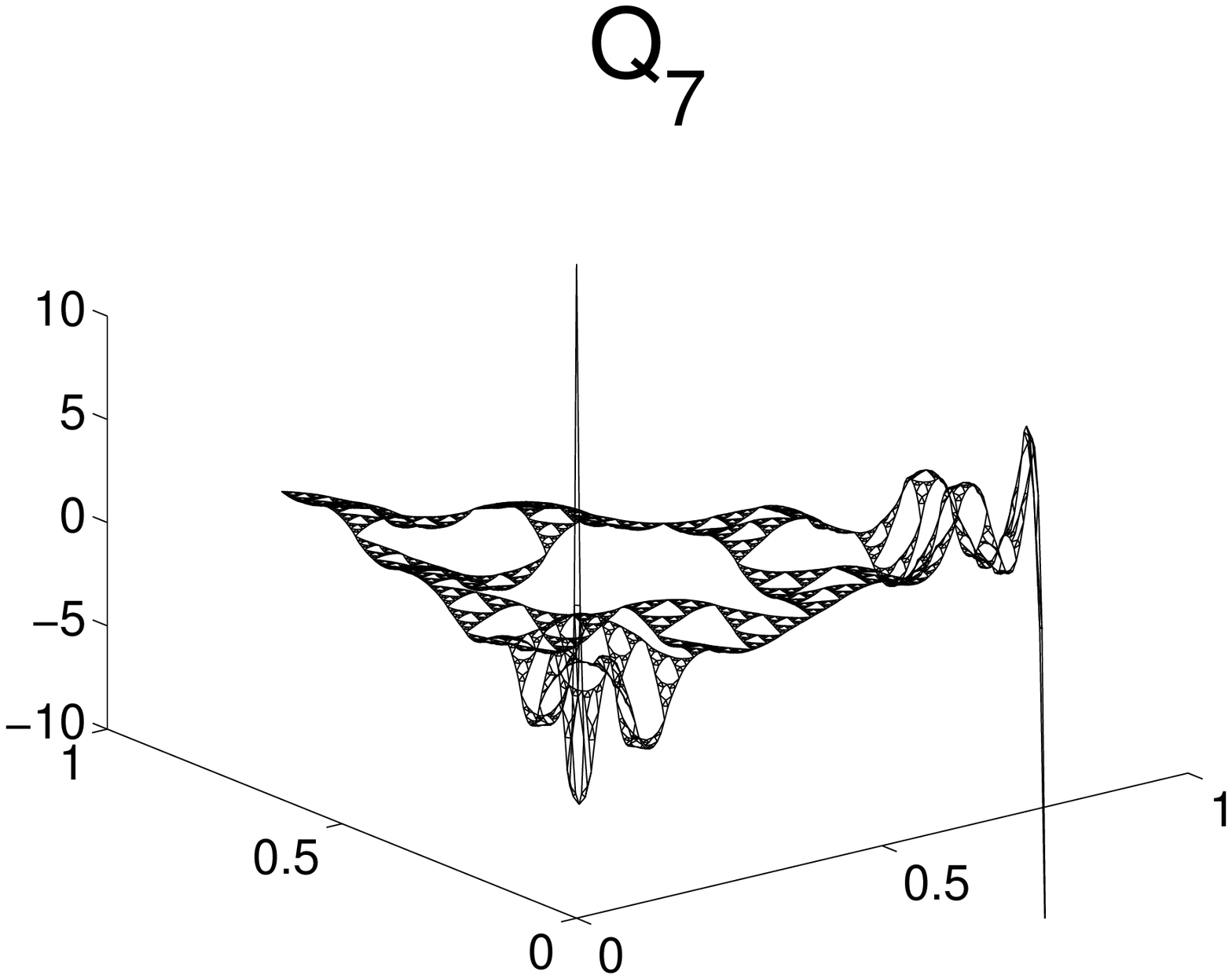}
\caption{4 antisymmetric orthonormal polynomials.}
\label{fig:6aop}
\end{center}
\end{figure}

\subsection{Symmetric orthogonal polynomials}\label{subsec4}
We next carried out the constructions of subsection~\ref{subsec3} to the fully symmetric polynomials which we first define.  By fully symmetric we mean symmetric under all rotations and flips in $D_3$.  Then we will apply the results of section~\ref{sec2} to this family of fully symmetric polynomials to obtain the symmetric OP $\{s_{k}\}_{k=0}^{\infty}$ and their normalized counterpart $\{S_{k}\}_{k=0}^{\infty}$. 

\begin{deft}\label{fspoly}
Define the fully symmetric monomial as follows for each $j \ge 0$:
\begin{equation}\label{rho}
\rho_j = P_{j1}^{(0)}+P_{j1}^{(1)}+P_{j1}^{(2)}
\end{equation}
The symmetric OP denoted $\{s_{j}\}_{j=0}^{\infty}$ are now obtained from these symmetric polynomials by applying the Gram-Schmidt process and satisfies $$\ip{s_{j}}{s_{k}}=d_{j}^{-2}\delta_{jk},$$ where $d_{j}^{-2}=\nm{s_{j}}^{2}$. Moreover, $s_{j}=\rho_{j} + \sum_{l=0}^{j-1}\mu_{j,l}\rho_{l}$ for a set of coefficients $\{\mu_{j, l}\}_{l=0}^{j-1}$.  

By normalizing the orthogonal polynomials $s_j$ we obtain the orthonormal OP denoted $\{S_{j}\}_{j=0}^{\infty}=\{d_{j}s_{j}\}_{j=0}^{\infty}$.
\end{deft}

\begin{rem}
Note that the above definitions will remain unchanged if in defining the symmetric polynomials $\rho_j$ we used $P_{j2}$ instead of $P_{j1}$; see \cite{nsty1} for details about this. 
\end{rem}

Observe that for each $j,  k \geq 0$ we have 
$\langle \rho_j , \rho_k \rangle = 6 \langle P_{j1}^{(0)} , P_{k1}^{(0)} \rangle,$ which implies that for $j\geq 0$, $\nm{\rho_{j}}_{L^{2}}^{2}=d_{j}^{-2}=6\nm{P_{j1}}_{L^{2}}^{2}.$ We remark that the size of $\nm{s_{j}}_{L^{2}}$ was already estimated in Theorem~\ref{nmpk}.  Furthermore, one checks easily  that $d_0^{-2}=\nm{\rho_0}_{L^{2}}^{2}=6\nm{P_{01}}_{L^{2}}^{2}=6$ 

As in subsection~\ref{subsec3}, we now plot the symmetric orthonormal polynomials as well as some of the related sequence. In particular,  Figure~\ref{fig:sk},  displays plots of the sequences $\log (\nm{s_{k}}_{L^{2}}^{-2})=\log (d_{k}^{2})$, $\log (-b_k)$, and $\log c_{k}$ for these symmetric OP. The behavior of the coefficients $c_k, b_k,$ and $d_k$ in the case of the symmetric OP $S_k$ are very similar to those we observed for the antisymmetric OP.

\begin{figure}[htp]
\begin{center}
\includegraphics[scale=.5]{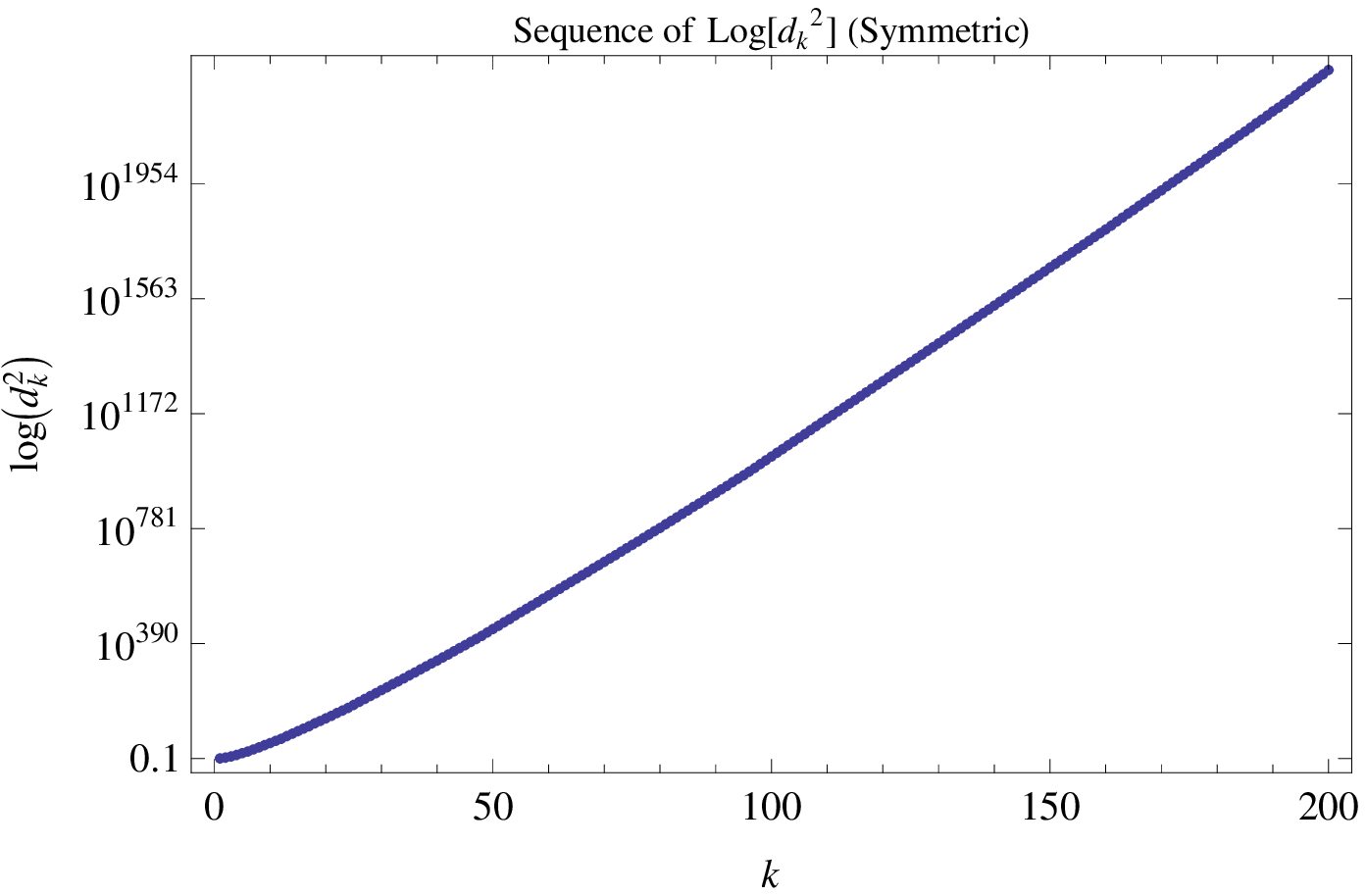} 
\includegraphics[scale=.5]{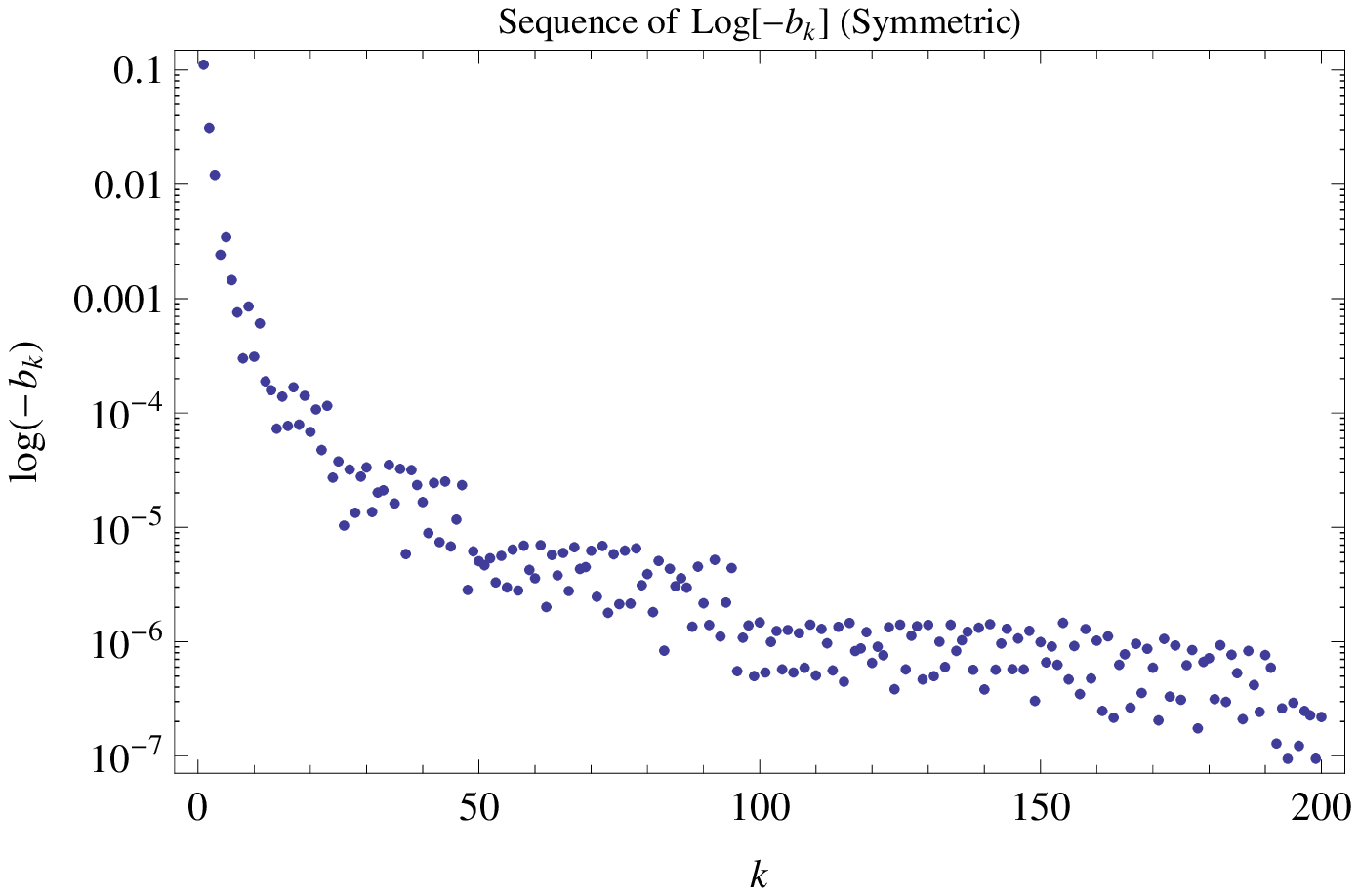} 
\includegraphics[scale=.5]{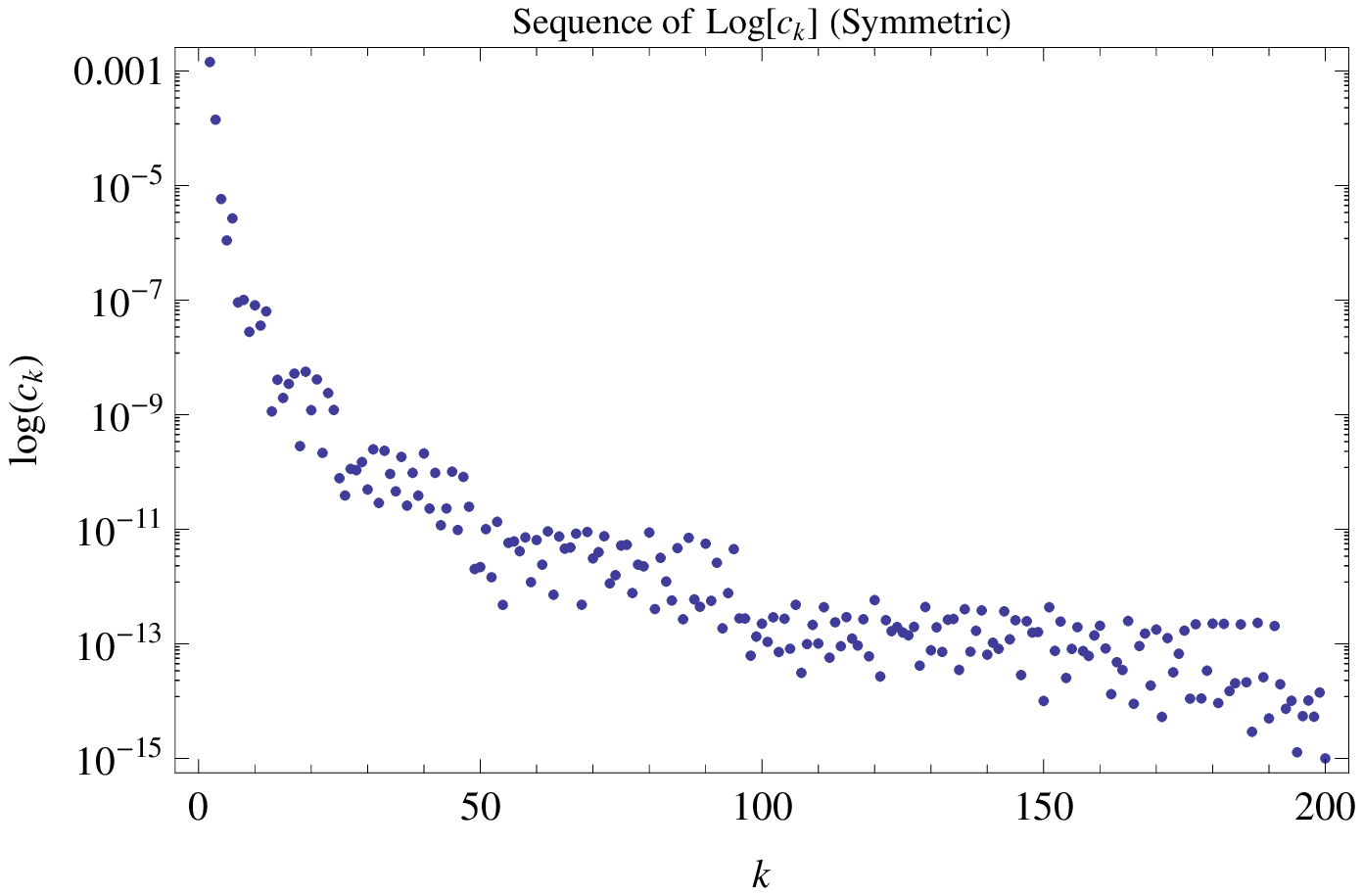} 
\caption{Plots of the $\log d_k^2$, $\log(-b_k)$, and $\log c_k$ for the symmetric OP $s_k$ in log scale.}
\label{fig:sk}
\end{center}
\end{figure}

In Table \ref{tab:Sj}, we show the coefficients $\mu_{j,\ell}$ for $s_j$, when $j=0, 1, \hdots, 7$

\begin{table}[htbp]
  \centering
  \caption{$s_j$ and their coefficients $\mu_{j,\ell} $}
    \label{tab:Sj}
    \begin{tabular}{rrrrrrrr}
          & $\rho_0$ & $\rho_1$ & $\rho_2$ & $\rho_3$ & $\rho_4$ & $\rho_5$ & $\rho_6$ \\
    $s_0$    & 1     & 0     & 0     & 0     & 0     & 0     & 0 \\
    $s_1$    & -5.56E-02 & 1     & 0     & 0     & 0     & 0     & 0 \\
    $s_2$    & 5.34E-04 & -2.44E-02 & 1     & 0     & 0     & 0     & 0 \\
    $s_3$    & -6.98E-07 & 9.72E-05 & -1.23E-02 & 1     & 0     & 0     & 0 \\
    $s_4$    & 1.56E-09 & -3.20E-07 & 6.15E-05 & -9.93E-03 & 1     & 0     & 0 \\
    $s_5$    & -1.46E-12 & 3.54E-10 & -9.41E-08 & 2.62E-05 & -6.48E-03 & 1     & 0 \\
    $s_6$    & 1.44E-16 & -8.51E-14 & 5.21E-11 & -2.94E-08 & 1.41E-05 & -5.02E-03 & 1 \\
    \end{tabular}
\end{table}

Four of the symmetric orthogonal polynomials $S_j=d_j s_j$, corresponding to $j=0, 3, 5,$ and $6$ are shown in Figures~\ref{fig:6sop}. As observed for the antisymmetric OP, $|S_{k}(q_{i})|$, which is constant for $i=1, 2, 3$ (due to symmetry) is very large compare to the values of $S_k$ at non-boundary points in $SG$.

\begin{figure}[hbp]
\begin{center}
\includegraphics[scale=.33]{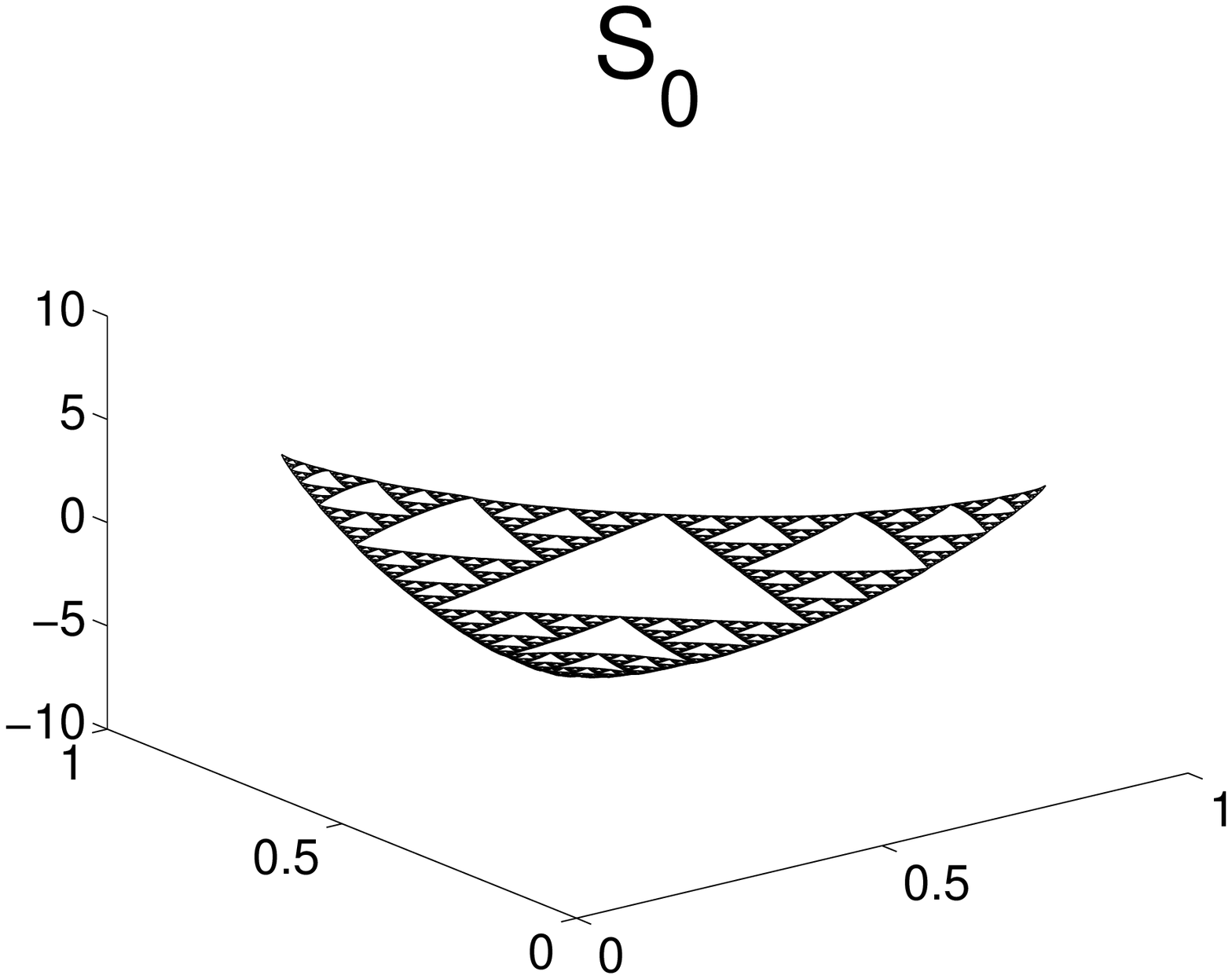} 
\includegraphics[scale=.33]{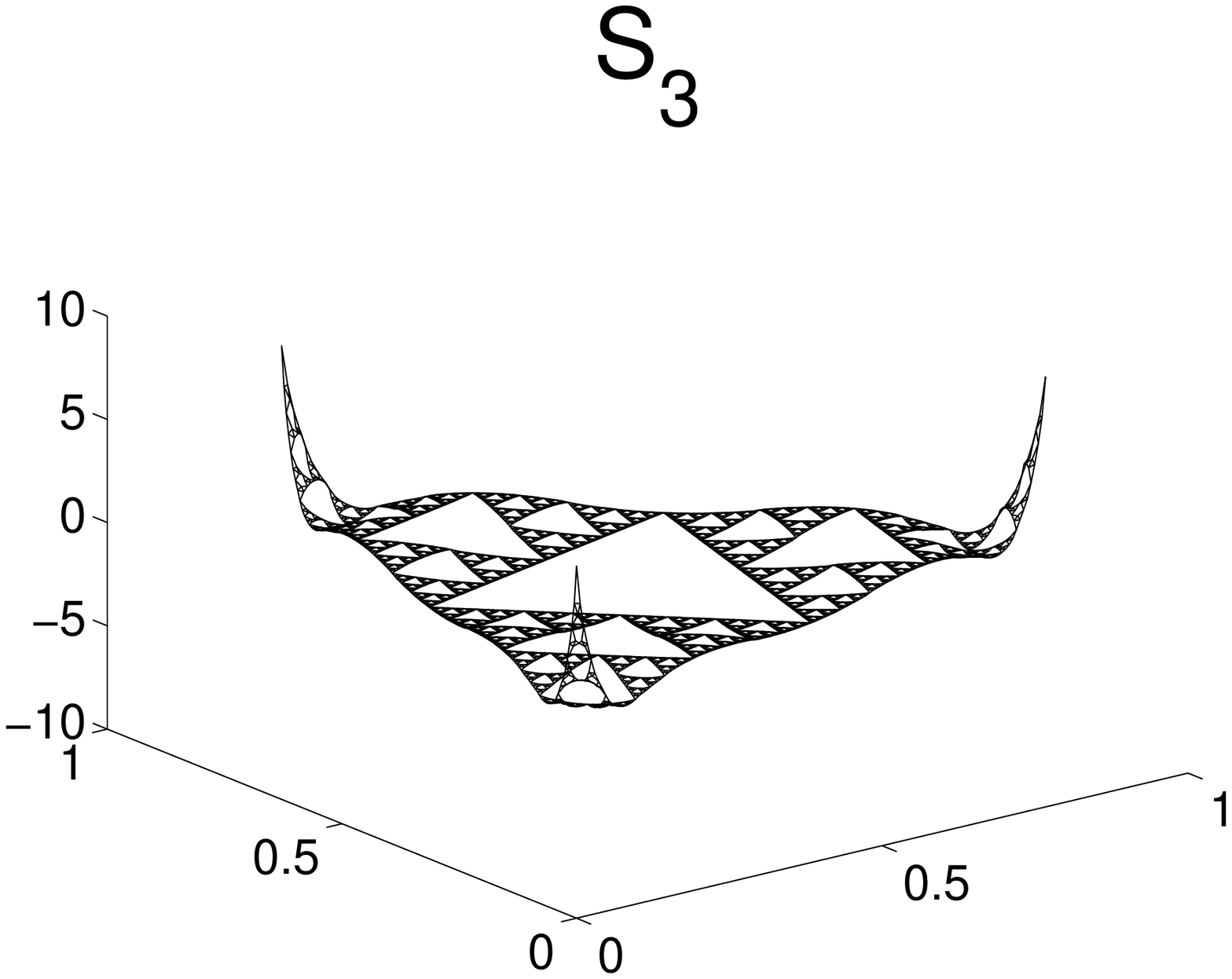}
\includegraphics[scale=.33]{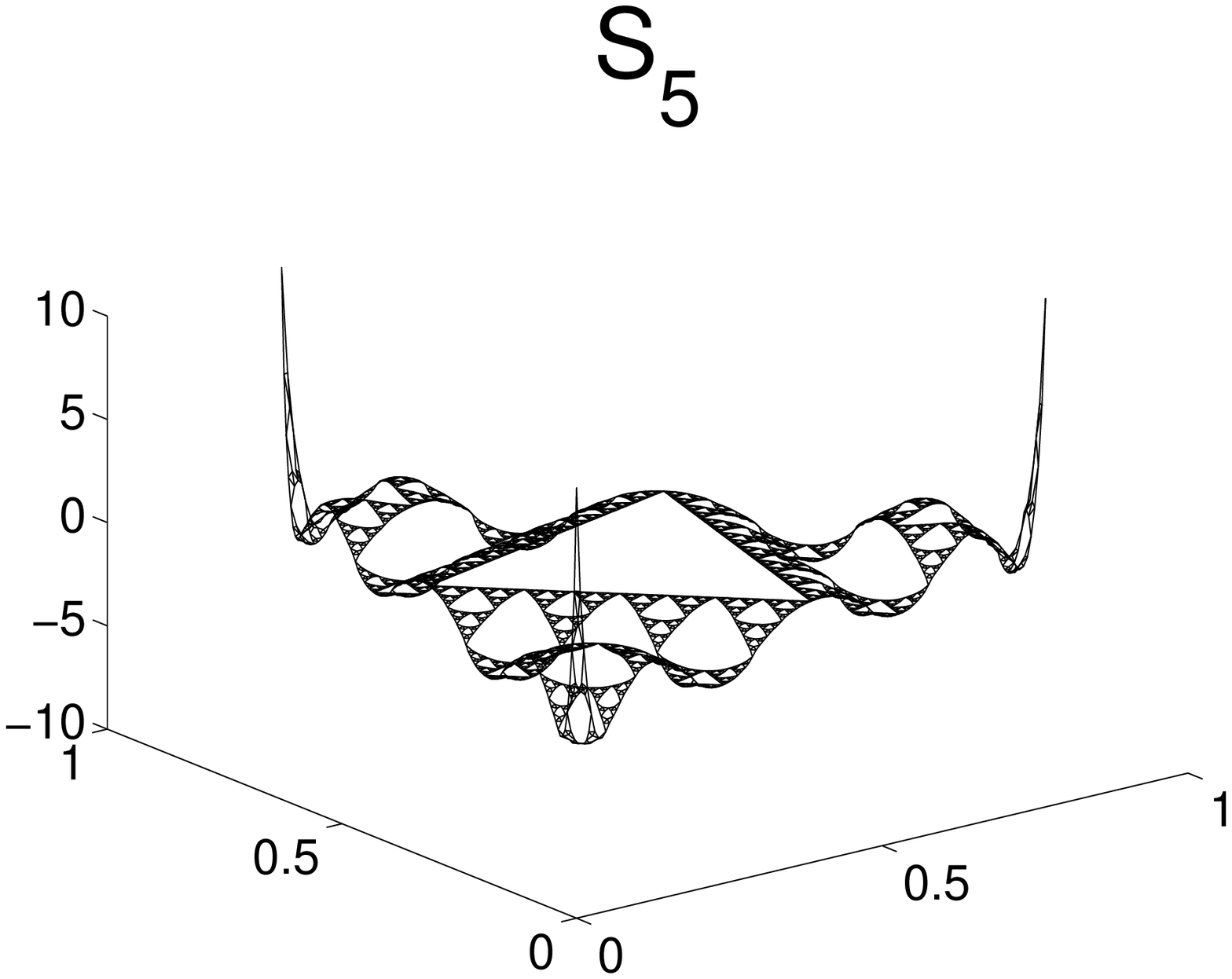} 
\includegraphics[scale=.33]{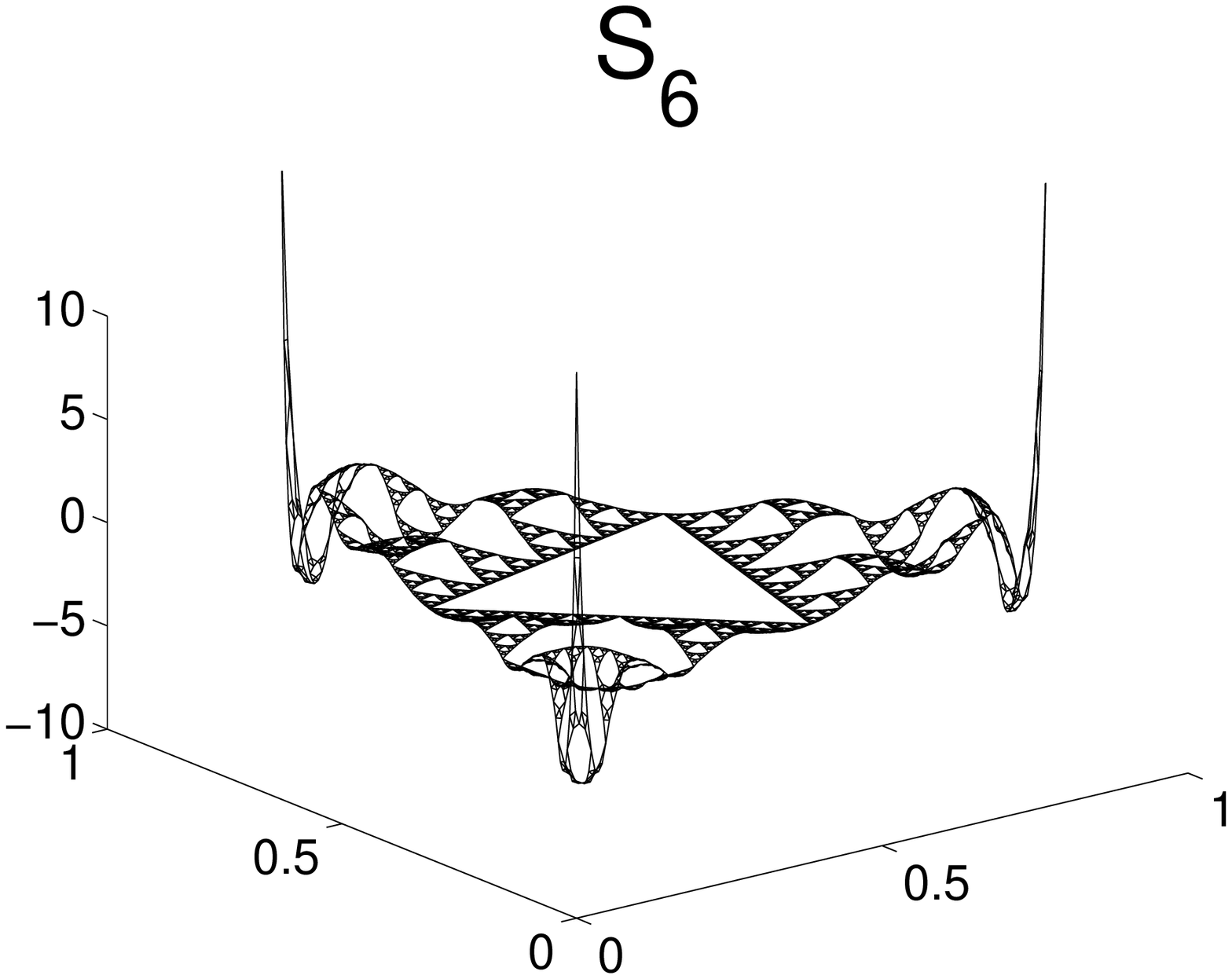}
\caption{4 symmetric orthonormal polynomials.}
\label{fig:6sop}
\end{center}
\end{figure}

\subsection{Orthonormal system of polynomials}\label{subsec5}
By combining both the antisymmetric and symmetric OP, we can form an orthogonal system of polynomials in $L^{2}(SG)$. Notice, that this system will not span the whole space $L^2$, since the space of polynomials is not dense in $L^{2}(SG)$ either.

Using Lemma~\ref{lemma1} we can prove that the set $\{ P_{j3}^{(0)} , P_{j3}^{(1)} , P_{j3}^{(2)} \}$ forms a tight frame for its span $\mathcal{F}_{j} =span \{ P_{j3}^{(0)} , P_{j3}^{(1)} , P_{j3}^{(2)} \}$. That is, there exists a constant $A_{j}>0$ such that for each $P \in \mathcal{F}_{j} =span \{ P_{j3}^{(0)} , P_{j3}^{(1)} , P_{j3}^{(2)} \}$ we have $$P(x)=A_{j}\sum_{n=0}^{2}\ip{P}{P^{(n)}_{j3}}P^{(n)}_{j3}(x), \forall x \in SG.$$ For more on frame theory we refer to \cite{chr}.

\begin{thm}\label{tf}
For each $j\ge0$, the set $\big\{ P_{j3}^{(0)} , P_{j3}^{(1)} , P_{j3}^{(2)} \big\}$ forms a tight frame for its two-dimensional span $\mathcal{F}_{j} =span \{ P_{j3}^{(0)} , P_{j3}^{(1)} , P_{j3}^{(2)} \}$ with frame bound $A_j=\frac{3}{2} \|P_{j3}^{(0)}\|_{L^{2}}^2$.
\end{thm}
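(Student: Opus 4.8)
The plan is to reduce the statement to a short linear-algebra fact about the Gram matrix of the three vectors $P_{j3}^{(0)},P_{j3}^{(1)},P_{j3}^{(2)}$. By the formulas \eqref{difinnerprodp3} of Lemma~\ref{lemma1}, for fixed $j$ every diagonal entry of this Gram matrix equals $\|P_{j3}^{(0)}\|_{L^2}^2$ and every off-diagonal entry equals $-\tfrac12\|P_{j3}^{(0)}\|_{L^2}^2$, so the Gram matrix is $G_j=\|P_{j3}^{(0)}\|_{L^2}^2\,M$ with
\[
M=\begin{pmatrix} 1 & -\tfrac12 & -\tfrac12\\[2pt] -\tfrac12 & 1 & -\tfrac12\\[2pt] -\tfrac12 & -\tfrac12 & 1\end{pmatrix}.
\]
First I would diagonalize $M$: the vector $(1,1,1)^{T}$ lies in its kernel, while on the orthogonal complement $\{a_0+a_1+a_2=0\}$ the matrix $M$ acts as $\tfrac32$ times the identity. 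Hence $M$ has eigenvalues $0,\tfrac32,\tfrac32$, has rank $2$, and satisfies $M^{2}=\tfrac32 M$ (which one may also check by a direct $3\times3$ multiplication).

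Next I would draw the two immediate consequences. Since $\|P_{j3}^{(0)}\|_{L^2}\neq0$ — this follows from the defining conditions \eqref{defmono}, as $\partial_T\Delta^{j}P_{j3}^{(0)}(q_0)=1$ forces $P_{j3}^{(0)}\neq0$ — we get $\operatorname{rank}G_j=\operatorname{rank}M=2$, so the span $\mathcal{F}_j$ is exactly two-dimensional. The kernel of $M$ translates, via $\|P_{j3}^{(0)}+P_{j3}^{(1)}+P_{j3}^{(2)}\|_{L^2}^{2}=(1,1,1)\,G_j\,(1,1,1)^{T}=0$ and continuity of the monomials, into the single linear relation $P_{j3}^{(0)}+P_{j3}^{(1)}+P_{j3}^{(2)}=0$, which accounts for the dependence among the three generators.

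Then I would verify the tight-frame identity directly. Writing an arbitrary $P\in\mathcal{F}_j$ as $P=\sum_{m}a_m P_{j3}^{(m)}$ with $a=(a_0,a_1,a_2)^{T}$, we have $\langle P,P_{j3}^{(n)}\rangle=(G_j a)_n=\|P_{j3}^{(0)}\|_{L^2}^{2}(Ma)_n$, and therefore
\[
\sum_{n=0}^{2}\bigl|\langle P,P_{j3}^{(n)}\rangle\bigr|^{2}
=\|P_{j3}^{(0)}\|_{L^2}^{4}\,a^{T}M^{2}a
=\tfrac32\|P_{j3}^{(0)}\|_{L^2}^{2}\bigl(\|P_{j3}^{(0)}\|_{L^2}^{2}\,a^{T}Ma\bigr)
=\tfrac32\|P_{j3}^{(0)}\|_{L^2}^{2}\,\|P\|_{L^2}^{2},
\]
using $M^{2}=\tfrac32M$ together with $\|P\|_{L^2}^{2}=a^{T}G_j a=\|P_{j3}^{(0)}\|_{L^2}^{2}\,a^{T}Ma$. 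This is precisely the tight-frame identity with bound $A_j=\tfrac32\|P_{j3}^{(0)}\|_{L^2}^{2}$; applying the frame operator (whose only nonzero eigenvalue on $\mathcal{F}_j$ is $A_j$) then yields the reconstruction formula stated before the theorem. There is essentially no hard step here: the entire content is invoking \eqref{difinnerprodp3} correctly, the $3\times3$ eigenvalue computation, and the single observation that $P_{j3}^{(0)}\neq0$ so that the span is genuinely two-dimensional and $A_j>0$.
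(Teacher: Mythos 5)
Your proof is correct and follows essentially the same route as the paper: both compute the Gram matrix of $\{P_{j3}^{(0)},P_{j3}^{(1)},P_{j3}^{(2)}\}$ from the inner-product formulas \eqref{difinnerprodp3} of Lemma~\ref{lemma1} and read off the frame bound from its nonzero eigenvalues $\tfrac32\|P_{j3}^{(0)}\|_{L^2}^{2}$ (of multiplicity two). Your verification of the tight-frame identity via $M^{2}=\tfrac32 M$ just makes explicit what the paper infers directly from the spectrum of the Gram/frame operator.
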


\begin{proof}
The frame bounds of $\big\{ P_{j3}^{(0)} , P_{j3}^{(1)} , P_{j3}^{(2)} \big\}$ are determined by the eigenvalues of its Gram matrix $G$ given by 
\[
\begin{array}{ccc}
G= & 
\begin{pmatrix} P_{j3}^{(0)} \\ P_{j3}^{(1)} \\ P_{j3}^{(2)} \end{pmatrix}
\begin{pmatrix} P_{j3}^{(0)} & P_{j3}^{(1)} & P_{j3}^{(2)} \end{pmatrix} &
= \dfrac{1}{2} \|P_{j3}^{(0)}\|_{L^{2}}^2 \begin{pmatrix} 2 & -1 & -1 \\ -1 & 2 & -1 \\ -1 & -1 & 2 \end{pmatrix}
\end{array}
\]
G has eigenvalues $\{ 0, \frac{3}{2} \|P_{j3}^{(0)}\|_{L^{2}}^2, \frac{3}{2} \|P_{j3}^{(0)}\|_{L^{2}}^2 \}$. This automatically shows that the frame operator $$S=\begin{pmatrix} P_{j3}^{(0)} & P_{j3}^{(1)} & P_{j3}^{(2)} \end{pmatrix} \begin{pmatrix} P_{j3}^{(0)} \\ P_{j3}^{(1)} \\ P_{j3}^{(2)} \end{pmatrix}$$ is the $2\times 2$ diagonal matrix with $\frac{3}{2} \|P_{j3}^{(0)}\|_{L^{2}}^2$ on the diagonal. Thus,  $\big\{ P_{j3}^{(0)} , P_{j3}^{(1)} , P_{j3}^{(2)} \big\}$ is a tight frame with frame bound $A_j = \frac{3}{2} \|P_{j3}^{(0)}\|_{L^{2}}^2$
\end{proof}

Recall that $\{S_{j}\}_{j=0}^{\infty}$ are the symmetric orthonormal polynomials, and for each $i=0, 1, 2$, corresponds a family of antisymmetric orthonormal polynomials $\{Q_{j}^{(i)}\}_{j=0}^{\infty}$. With these notations, the following result holds.

\begin{thm}\label{opcollection}
The set
\begin{equation*}
\left \{\phi_j^{(i)}, i=0, 1,2\right \}_{j\geq 0} =: \left \{  \sqrt{\frac{2}{3}} Q_j^{(i)} + \sqrt{\frac{1}{3}}S_j, i=0, 1, 2    \right \}_{j=0}^{\infty}
\end{equation*}
is an orthonormal basis for a subspace $\mathcal{P}$ of $L^2(SG)$.
\end{thm}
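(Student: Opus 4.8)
The plan is to reduce everything to the statement that $\{\phi_j^{(i)}\}_{j\ge 0,\ i=0,1,2}$ is an orthonormal system in $L^2(SG)$: once that is known, taking $\mathcal P$ to be the closure in $L^2(SG)$ of the linear span of the $\phi_j^{(i)}$ produces a closed subspace of which $\{\phi_j^{(i)}\}$ is, tautologically, an orthonormal basis. So the content is the identity $\ip{\phi_j^{(i)}}{\phi_k^{(i')}}=\delta_{j,k}\delta_{i,i'}$, which I would get by expanding
\begin{equation*}
\ip{\phi_j^{(i)}}{\phi_k^{(i')}}=\tfrac23\ip{Q_j^{(i)}}{Q_k^{(i')}}+\tfrac{\sqrt2}{3}\bigl(\ip{Q_j^{(i)}}{S_k}+\ip{S_j}{Q_k^{(i')}}\bigr)+\tfrac13\ip{S_j}{S_k}
\end{equation*}
and treating the symmetric--symmetric, mixed, and antisymmetric--antisymmetric pieces in turn.

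The symmetric--symmetric piece is immediate: $\ip{S_j}{S_k}=\delta_{j,k}$ by the construction of the orthonormal symmetric polynomials in Definition~\ref{fspoly}. The mixed pieces vanish by symmetry: each $S_j$ is fully symmetric (a linear combination of the $\rho_l$), while each $Q_k^{(i)}$ is antisymmetric under the reflection $R_i$ of $SG$ whose axis passes through $q_i$ and the midpoint of the opposite side (being a linear combination of the $P_{l3}^{(i)}$). Since $\mu$ is invariant under $R_i$, this reflection induces a unitary operator $U_i$ on $L^2(SG)$, and $U_iS_j=S_j$, $U_iQ_k^{(i)}=-Q_k^{(i)}$, so $\ip{S_j}{Q_k^{(i)}}=\ip{U_iS_j}{U_iQ_k^{(i)}}=-\ip{S_j}{Q_k^{(i)}}=0$ for all $i,j,k$ (and likewise $\ip{Q_j^{(i)}}{S_k}=0$).

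For the antisymmetric--antisymmetric piece, when $i=i'$ we simply have $\ip{Q_j^{(i)}}{Q_k^{(i)}}=\delta_{j,k}$, hence $\ip{\phi_j^{(i)}}{\phi_k^{(i)}}=\bigl(\tfrac23+\tfrac13\bigr)\delta_{j,k}=\delta_{j,k}$. When $i\ne i'$, I would first record that the Gram--Schmidt data does not depend on the base point: by the first identity in~\eqref{difinnerprodp3} the Gram matrix $[\ip{P_{l3}^{(i)}}{P_{m3}^{(i)}}]_{l,m}$ is the same for all $i$, and since the Gram--Schmidt process depends only on this matrix, the coefficients $\omega_{j,l}$ and the norms $d_j^{-2}=\nm{p_j}_{L^2}^2$ are independent of $i$, so that $Q_j^{(i)}=d_j\sum_{l=0}^{j}\omega_{j,l}P_{l3}^{(i)}$ with the same $\omega_{j,l},d_j$ for every $i$. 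Then, using the second identity in~\eqref{difinnerprodp3},
\begin{equation*}
\ip{Q_j^{(i)}}{Q_k^{(i')}}=d_jd_k\sum_{l,m}\omega_{j,l}\omega_{k,m}\ip{P_{l3}^{(i)}}{P_{m3}^{(i')}}=-\tfrac12\,d_jd_k\sum_{l,m}\omega_{j,l}\omega_{k,m}\ip{P_{l3}^{(0)}}{P_{m3}^{(0)}}=-\tfrac12\ip{Q_j^{(0)}}{Q_k^{(0)}}=-\tfrac12\delta_{j,k},
\end{equation*}
so $\ip{\phi_j^{(i)}}{\phi_k^{(i')}}=\bigl(\tfrac23\cdot(-\tfrac12)+\tfrac13\bigr)\delta_{j,k}=0$. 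Combining the three cases gives $\ip{\phi_j^{(i)}}{\phi_k^{(i')}}=\delta_{j,k}\delta_{i,i'}$, and then $\mathcal P$ defined as above has $\{\phi_j^{(i)}\}$ as an orthonormal basis, completing the proof. I expect the only point needing a little care to be the step for $i\ne i'$: one must verify that the Gram--Schmidt coefficients $\omega_{j,l}$ and normalizers $d_j$ genuinely coincide for the three base points, so that the cross-inner-product identity of Lemma~\ref{lemma1} transfers verbatim from the monomials $P_{\cdot3}^{(n)}$ to the orthonormal polynomials $Q_\cdot^{(n)}$; the rest is a routine bilinear expansion together with the elementary fact that an orthonormal set is a basis of its closed span.
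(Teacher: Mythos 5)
Your proof is correct, and it follows the paper's overall strategy (bilinear expansion of $\ip{\phi_j^{(i)}}{\phi_k^{(i')}}$, symmetry killing the mixed terms) but diverges at the one step that actually matters: the cross term $\ip{Q_j^{(i)}}{Q_k^{(i')}}$ for $i\neq i'$. The paper evaluates this via a Gauss--Green boundary sum and asserts it equals $0$; you instead combine the two identities of~\eqref{difinnerprodp3} with the observation that the Gram--Schmidt coefficients and normalizers are independent of the base point, obtaining $\ip{Q_j^{(i)}}{Q_k^{(i')}}=-\tfrac12\delta_{j,k}$. Your value is the correct one: since the Gram matrix in Theorem~\ref{tf} annihilates $(1,1,1)$, one has $P_{l3}^{(0)}+P_{l3}^{(1)}+P_{l3}^{(2)}=0$, hence $Q_j^{(0)}+Q_j^{(1)}+Q_j^{(2)}=0$, and three unit vectors summing to zero with equal pairwise inner products force that common value to be $-\tfrac12$. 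It is also the only value compatible with the conclusion, since $\tfrac23 x+\tfrac13=0$ requires $x=-\tfrac12$; with the paper's claimed $0$ one would get $\ip{\phi_j^{(i)}}{\phi_j^{(i')}}=\tfrac13\neq 0$ for $i\neq i'$, and indeed the paper's final line $\tfrac23\delta_{j,k}\delta_{i,\ell}+\tfrac13\delta_{j,k}=\delta_{j,k}\delta_{i,\ell}$ is not a correct identity. So your route does not merely differ from the paper's: it repairs an error in the published argument. The one point you rightly flag as needing care --- that the $\omega_{j,l}$ and $d_j$ coincide for the three base points --- is exactly right and is justified by the first identity of~\eqref{difinnerprodp3}, since Gram--Schmidt depends only on the Gram matrix; the rest (mixed terms vanishing by the reflection symmetry of $\mu$, and an orthonormal system being an orthonormal basis of its closed span) is routine and matches the paper.
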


\begin{proof}
For $j\geq 0$ and $i \in \{0,1,2\}$, let an element in the above system be defined by 
\begin{equation*}
\phi_j^{(i)} = \sqrt{\frac{2}{3}} Q_j^{(i)} + \sqrt{\frac{1}{3}}S_j
\end{equation*}
Then, the inner product of two orthonormal system elements is: 
\begin{eqnarray*}
\langle \phi_j^{(i)}, \phi_k^{(\ell)} \rangle &=& \langle \sqrt{\frac{2}{3}} Q_j^{(i)} + \sqrt{\frac{1}{3}}S_j, \sqrt{\frac{2}{3}} Q_k^{(\ell)} + \sqrt{\frac{1}{3}}S_k \rangle \\
&=& \frac{2}{3} \langle Q_j^{(i)}, Q_k^{(\ell)} \rangle + \frac{\sqrt{2}}{3} \langle S_j, Q_k^{(\ell)} \rangle + \frac{\sqrt{2}}{3} \langle Q_j^{(i)}, S_k \rangle + \frac{1}{3} \langle S_j, S_k \rangle .
\end{eqnarray*}
Recall that $S_j$ is fully symmetric and $Q_j^{(n)}$ is antisymmetric, so their inner product is zero for all $j$ and $n$.
Moreover, if $n\neq n'$, by the Gauss Green formula, we can rewrite 
\begin{equation*}
\langle Q_{k}^{(n)}, Q_{k}^{(n')} \rangle  = \sum _{j = 0}^{k} \sum_{i=0}^{2}  Q_{j}^{(n)}(q_i) \partial_n Q_{j}^{(n')} (q_i) - Q_{j}^{(n')}(q_i) \partial_n Q_{j}^{(n)} (q_i).
\end{equation*}
We now evaluate the right-hand side of the last equation. When $i = n$ or $i=n'$,  $Q_{j}^{(n)}(q_i) = 0$, and this eliminates two terms for each $j$ .
Let $a_j = Q_{j}^{(n)}(q_{n+1})$, then $-a_j = Q_{j}^{(n)}(q_{n-1})$ (by symmetry). If we let $b_j = \partial_n Q_{j}^{(n)}(q_{n+1})$, then $-b_j = \partial_n Q_{j}^{(n)}(q_{n-1})$.  If we let $c_j = \partial_n Q_{j}^{(n)}(q_{n})$, then  $c_j = 0$ since $\partial_n P_{j3}^{(n)}(q_{n})=0$ for all j.
Then we have
\begin{equation*}
\langle Q_{k}^{(n)}, Q_{k}^{(n')} \rangle  = \sum _{j = 0}^{k}   [0 - (- a_j)(c_j) + (a_j)(-b_j) - (-a_j)(b_j) + (a_j) (c_j) - 0]=2 a_j c_j = 0
\end{equation*}
More generally, the same argument can be used to prove that, $\langle Q_{k}^{(n)}, Q_{k'}^{(n')}\rangle  =\delta_{k, k'}\delta_{n, n'}$. Consequently, 

$$
\langle \phi_j^{(i)}, \phi_k^{(\ell)} \rangle= \frac{2}{3} \delta_{j,k}\delta_{i,\ell} + \frac{1}{3} \delta_{j,k}  = \delta_{j,k}\delta_{i,\ell}
$$
\end{proof}

\begin{rem} Notice that using a different normalization we can show that the set
\begin{equation*}
 \bigg\{ \sqrt{\frac{2}{3}} \sqrt{\frac{d_j}{\tilde{d_j}}} p_j^{(i)} + \sqrt{\frac{1}{3}} s_j , i=0, 1, 2\bigg\}_{j=0}^{\infty}
\end{equation*}
is also an orthogonal system for $L^2(SG)$.
\end{rem}
Four of these orthonormal polynomials are plotted in Figure~\ref{fig:6ONB}, and as might be expected, these OP do not seem to possess any obvious symmetry.

\begin{figure}[hbp]
\begin{center}
\includegraphics[scale=.33]{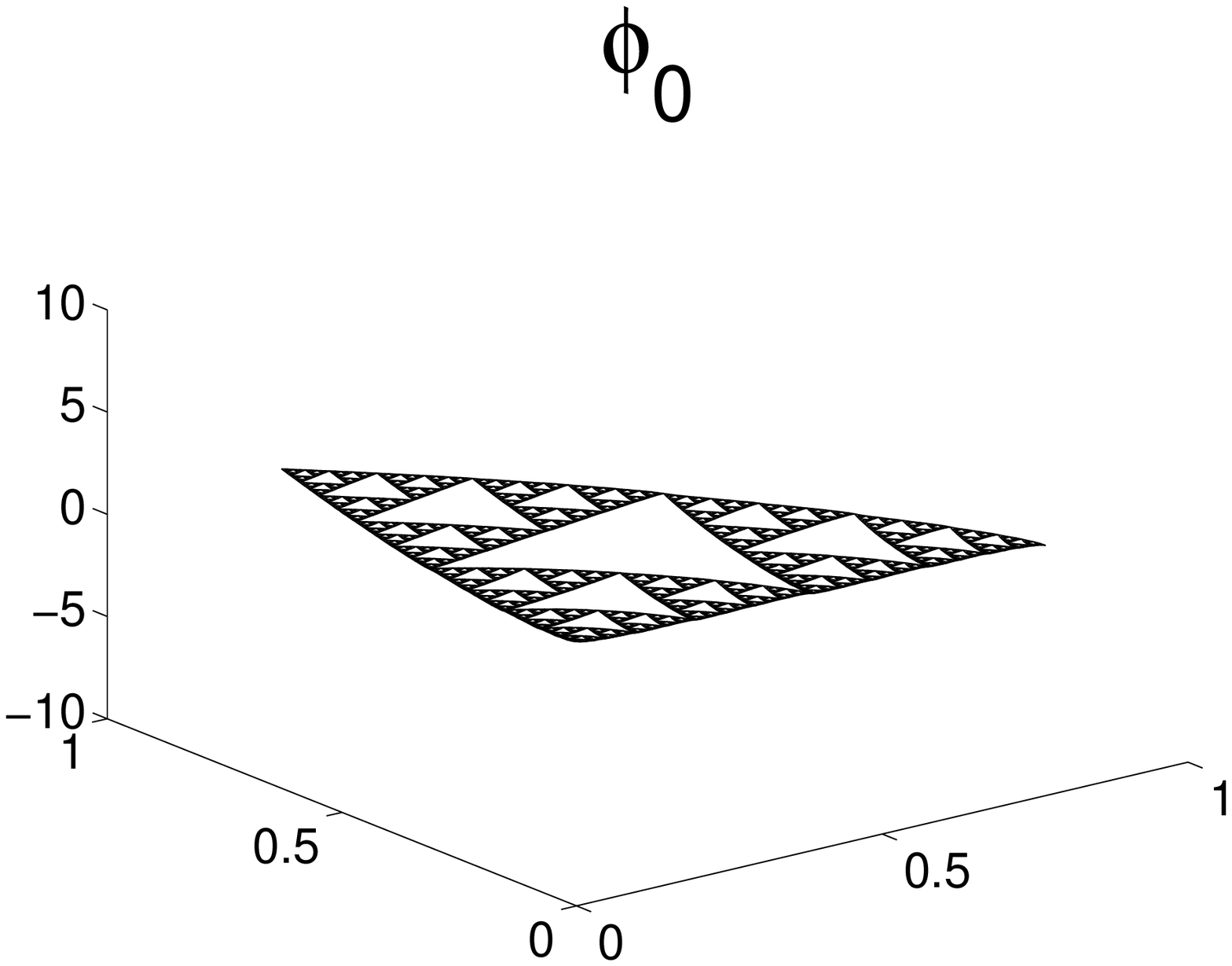} 
\includegraphics[scale=.33]{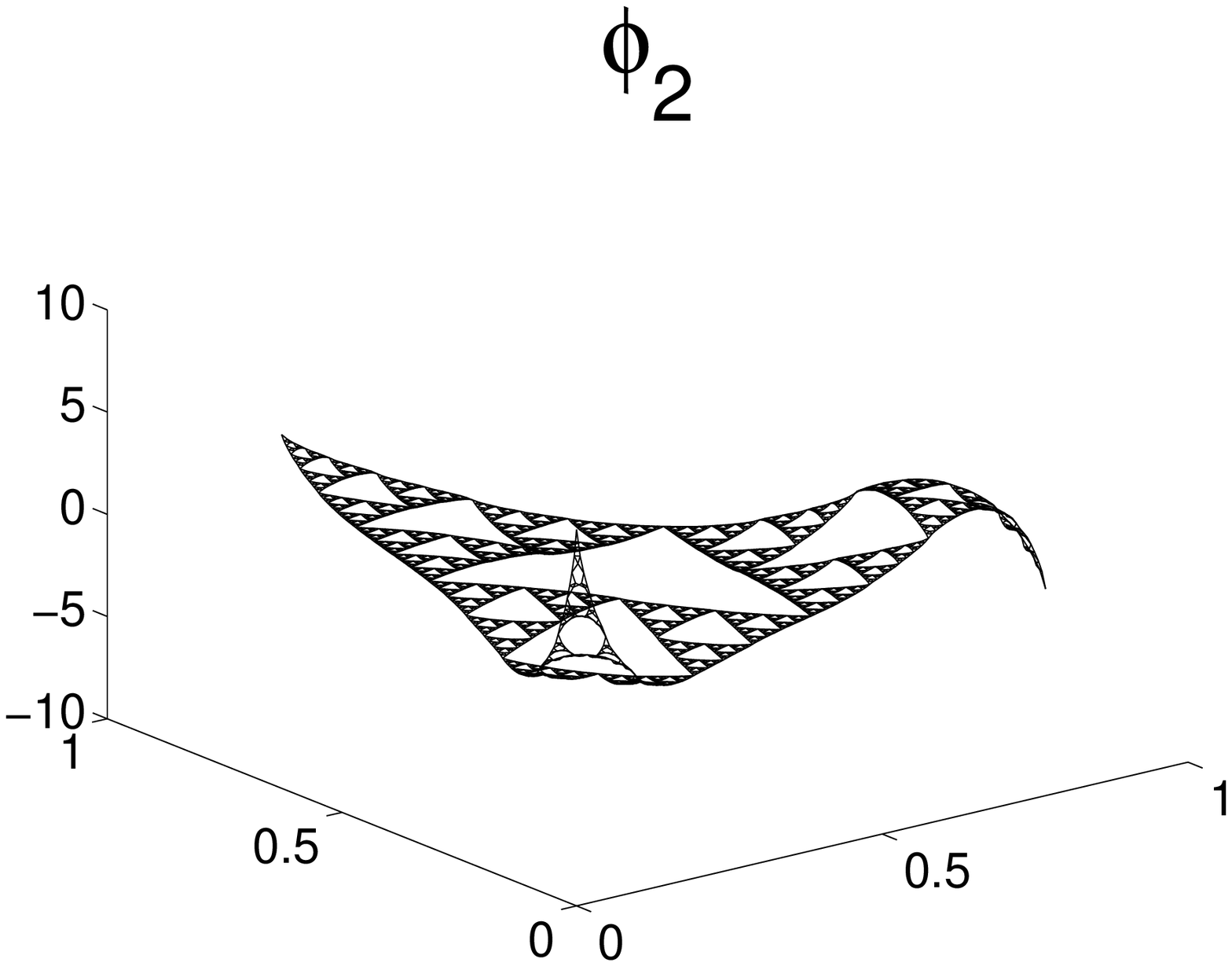}
\includegraphics[scale=.33]{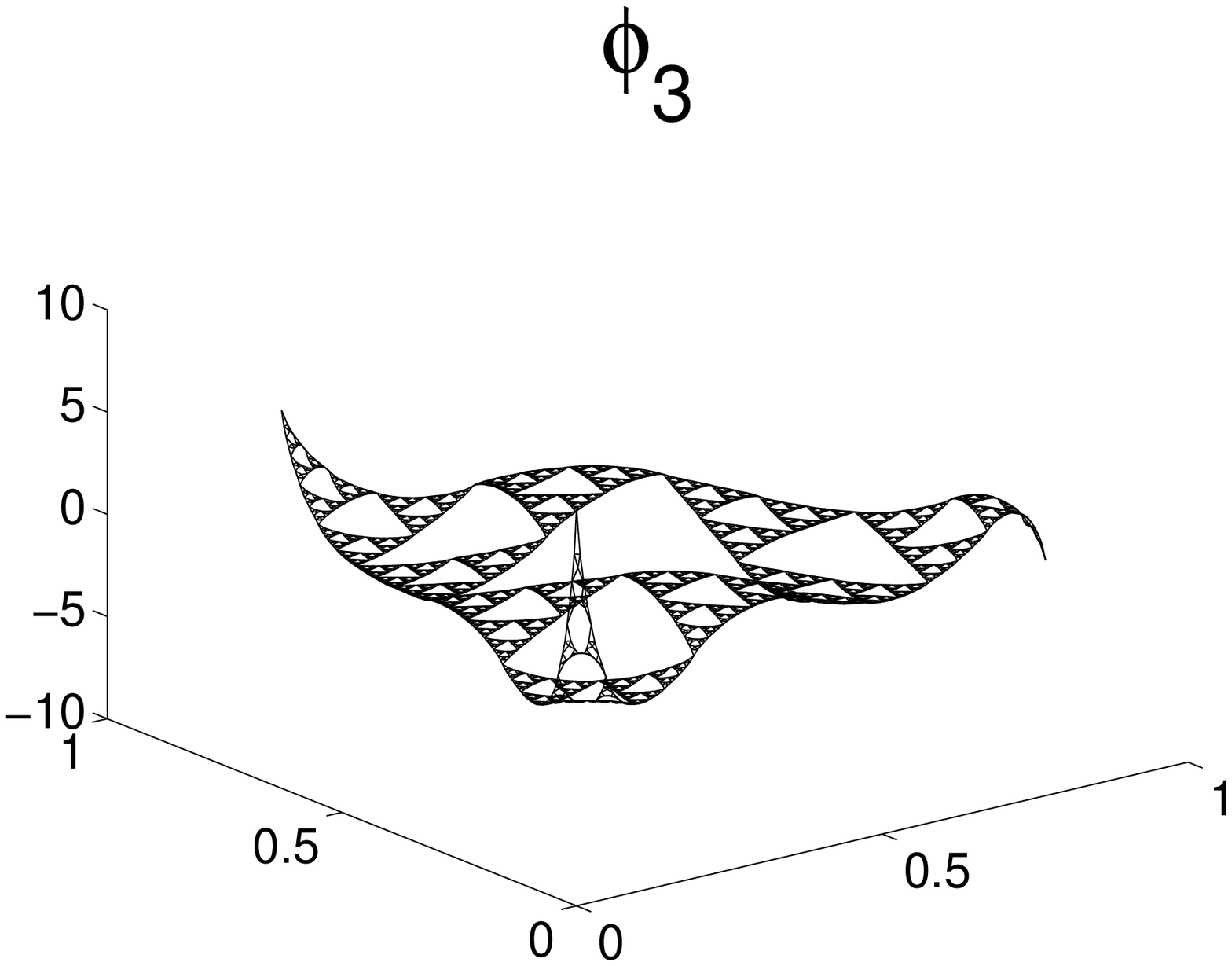} 
\includegraphics[scale=.33]{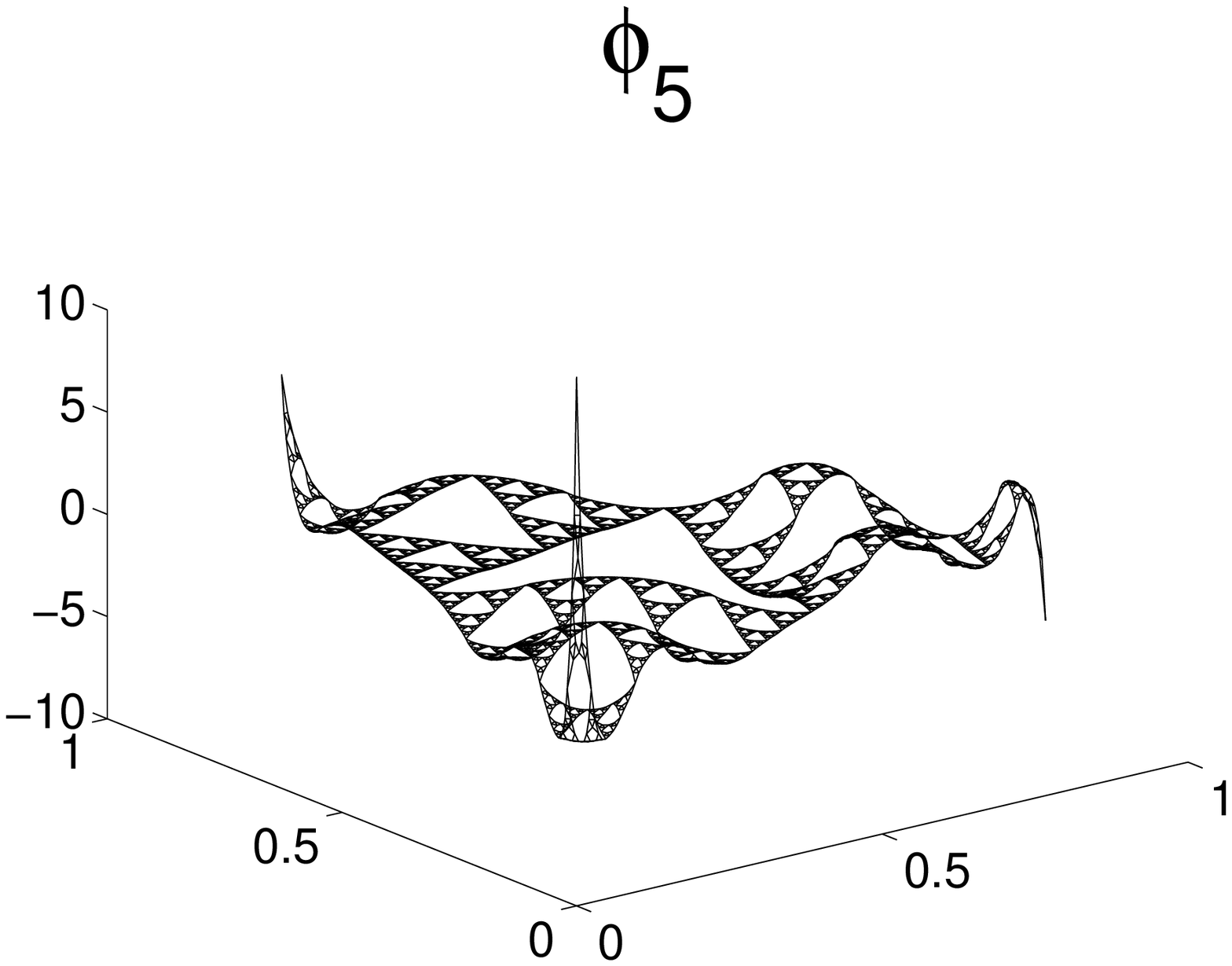}
\caption{4 orthonormal polynomials.}
\label{fig:6ONB}
\end{center}
\end{figure}

\subsection{Zero sets of orthogonal polynomials on $SG$}\label{subsec6}

Classical orthogonal polynomial theory suggests that zeros of higher order polynomials should interweave between the zeros of the next higher order polynomial.  Specifically, if $x_0$ is a zero of $p_{n}$, then there exist $x_a$ and $x_b$ zeros of $P_{n+1}$ such that $x_a < x_0 < x_b$. 

 On $SG$, we have not been able to establish a similar result. In fact, it is not clear what 'interweaving' will mean in the fractal setting. More generally, on $SG$, the zero sets of the OP seem very difficult to fully describe. However, using~\eqref{jacobieq} we see that an exact characterization of the zero set of $Q_n$ is the set of all points $x_0 \in SG$ such that $$\tilde{F}_{n}(x_{0})=J_{n}Q^{(n)}(x_{0}).$$ Though this looks similar to the characterization of the zeros of classical orthogonal polynomials in terms of the eigenvectors of a Jacobi matrix, there is a main difference in that $\tilde{F}_{n}(x_{0})$ is a function of the auxiliary polynomials $\tilde{f}_k$, $k-0, 1, \hdots, n-1.$ Despite this difficulty, we have some numerical data, suggesting some structures in these zero sets. In particular, we shall display below  nodal domains corresponding to some of these OP.

We first consider the zeros of the OP on the edges of $SG$. Notice
that there are two types of edges, which we call bottom edge (the edge
across from $q_0$) and side edges (edges which meet $q_0$).   We
present some numerical data in figures~\ref{fig:Qbottomedges},
~\ref{fig:Sbottomedges},  and~\ref{fig:edges}, which seem to indicate
that the zeros on these edges behave like zeros of classical
orthogonal polynomials. Because $S_k$ is fully
symmetric its behavior on side edges is the same as
their behavior on the bottom edge. The restriction of $Q_k$ to the side edge of $SG$ as illustrated in
Figure~\ref{fig:edges}.

\begin{figure}[htp]
\begin{center}
\includegraphics[scale=.2]{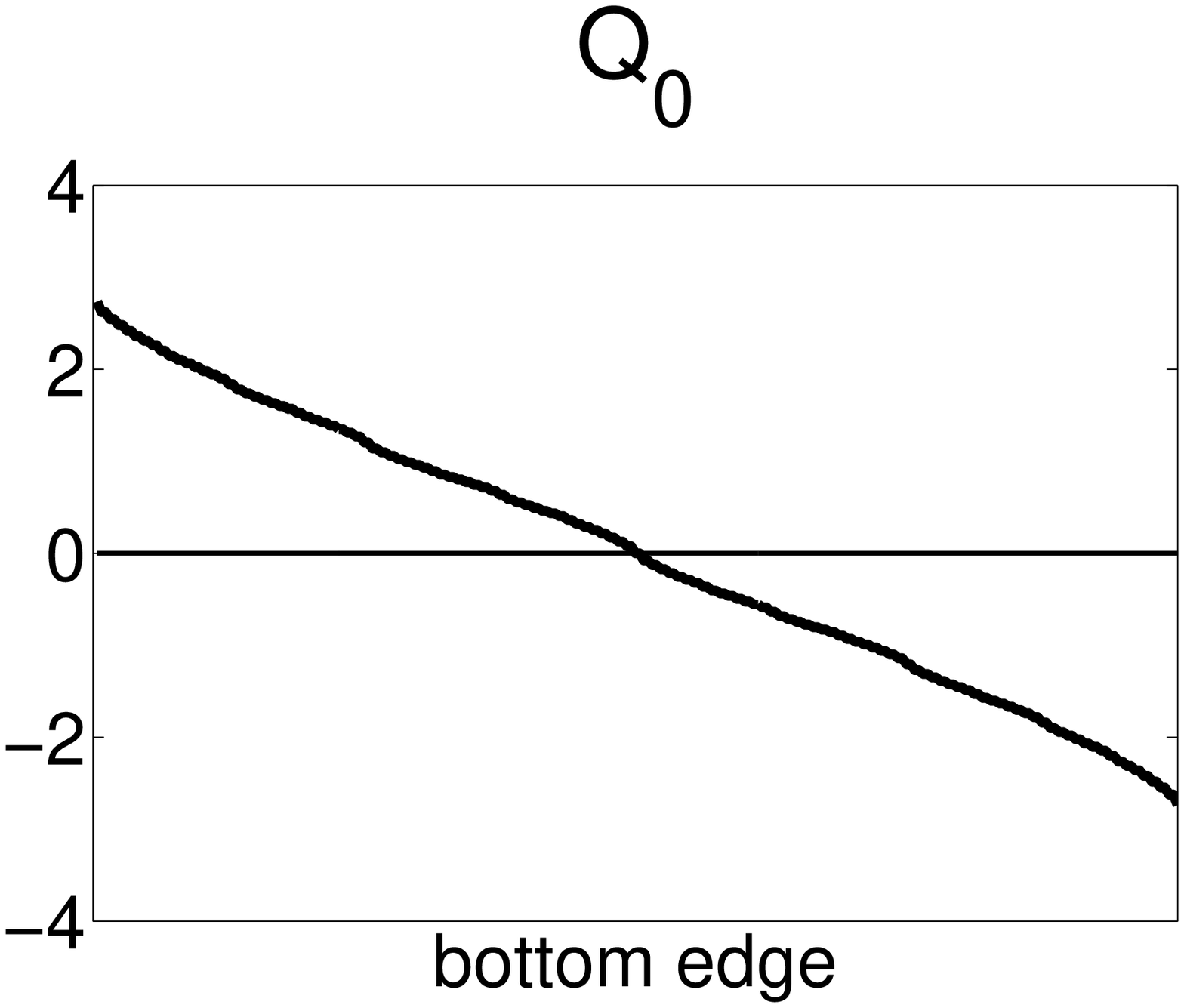}
\includegraphics[scale=.2]{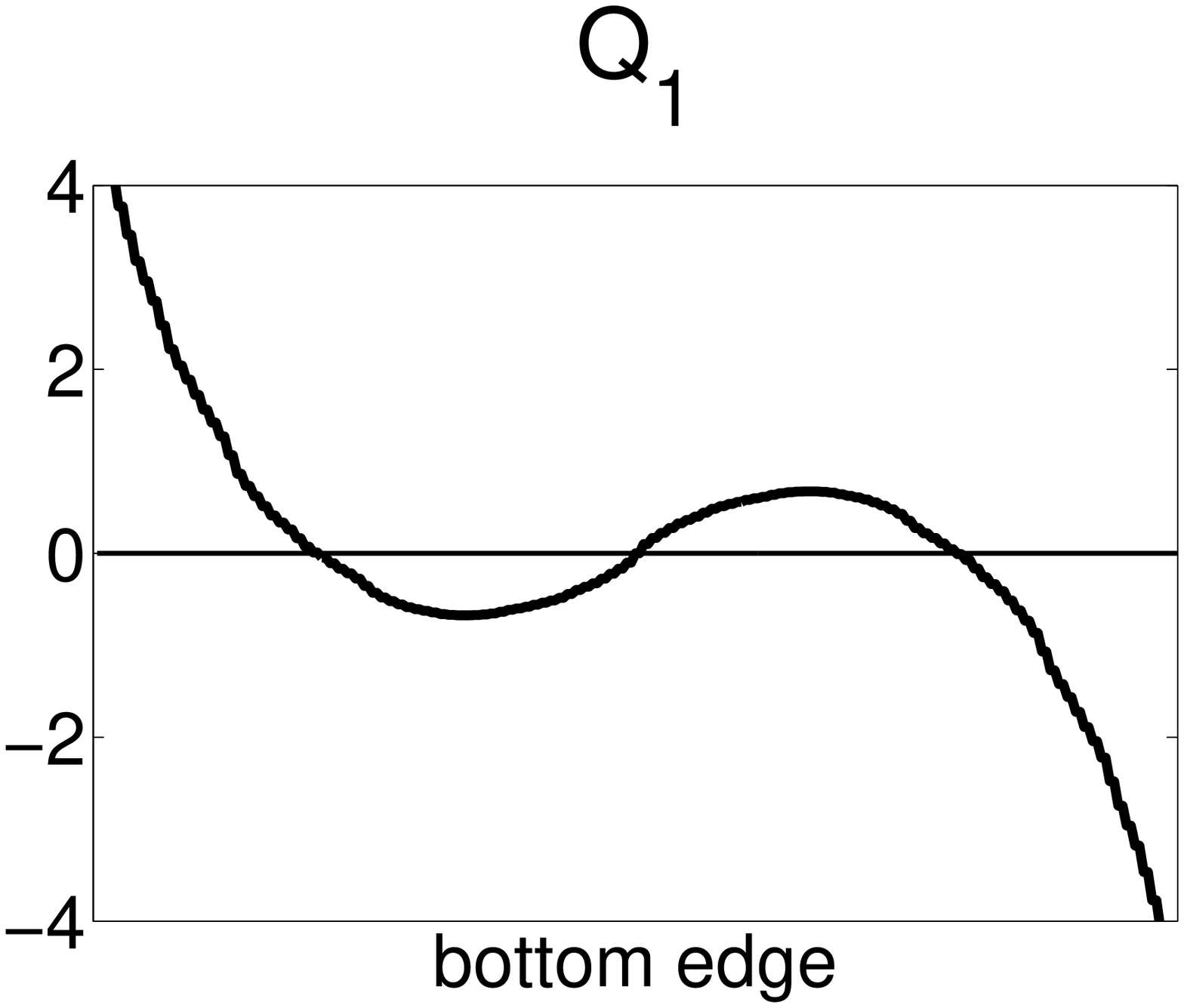}
\includegraphics[scale=.2]{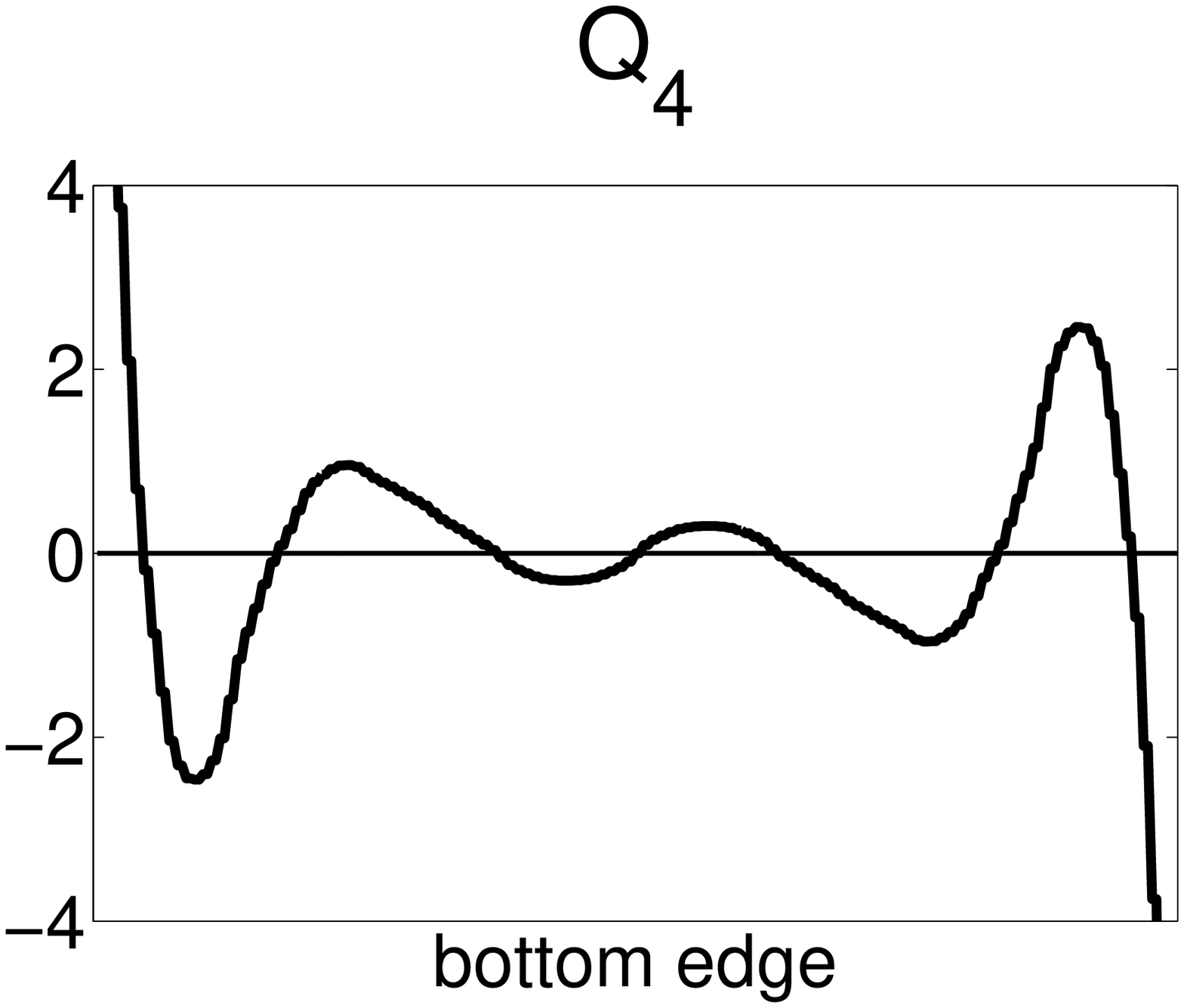}
\includegraphics[scale=.2]{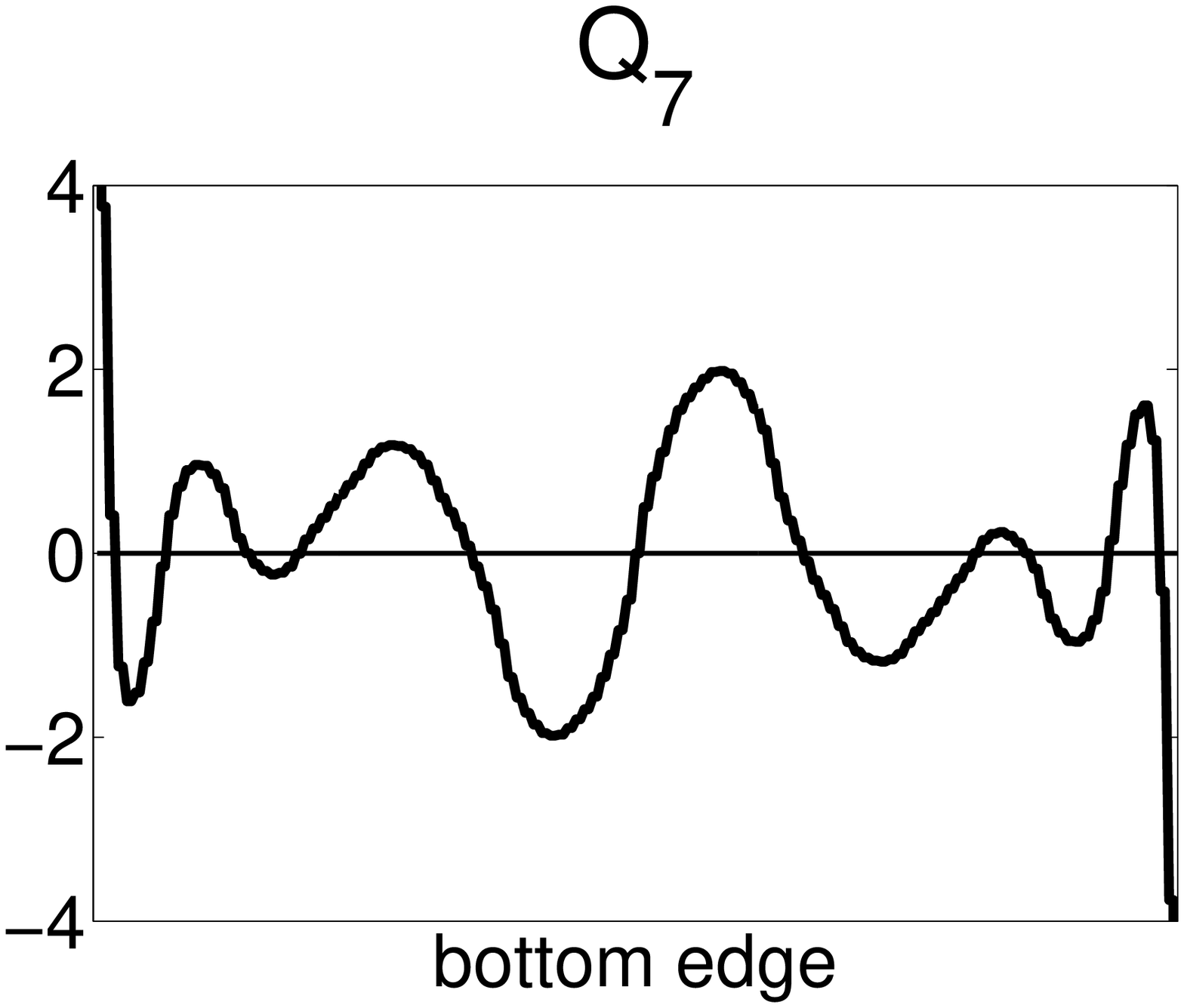}
\caption{Graph  $Q_k$, $k=0,1, 4, 7$ on the bottom edge of $SG$}
\label{fig:Qbottomedges}
\end{center}
\end{figure}

\begin{figure}[htp]
\begin{center}
\includegraphics[scale=.2]{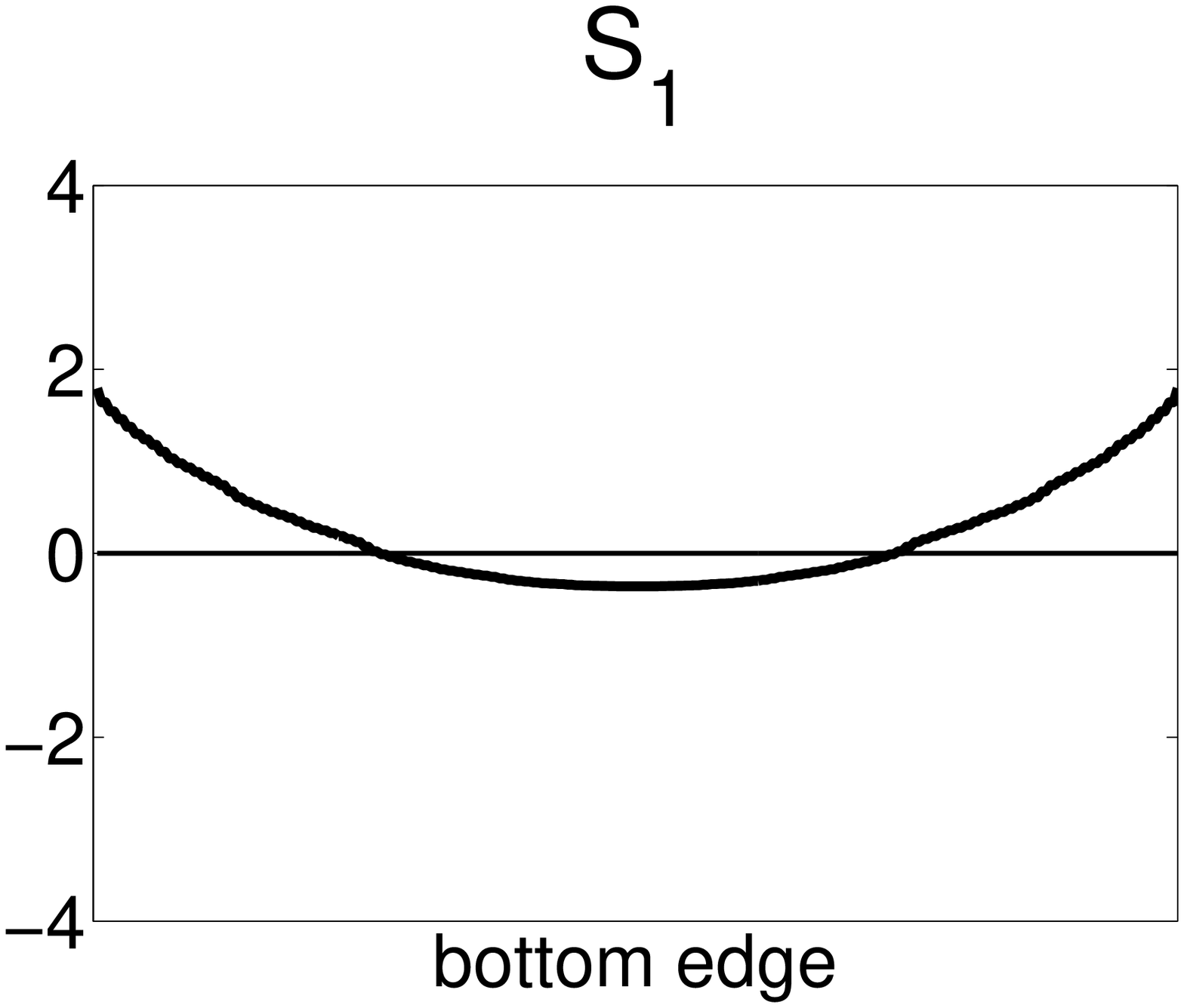}
\includegraphics[scale=.2]{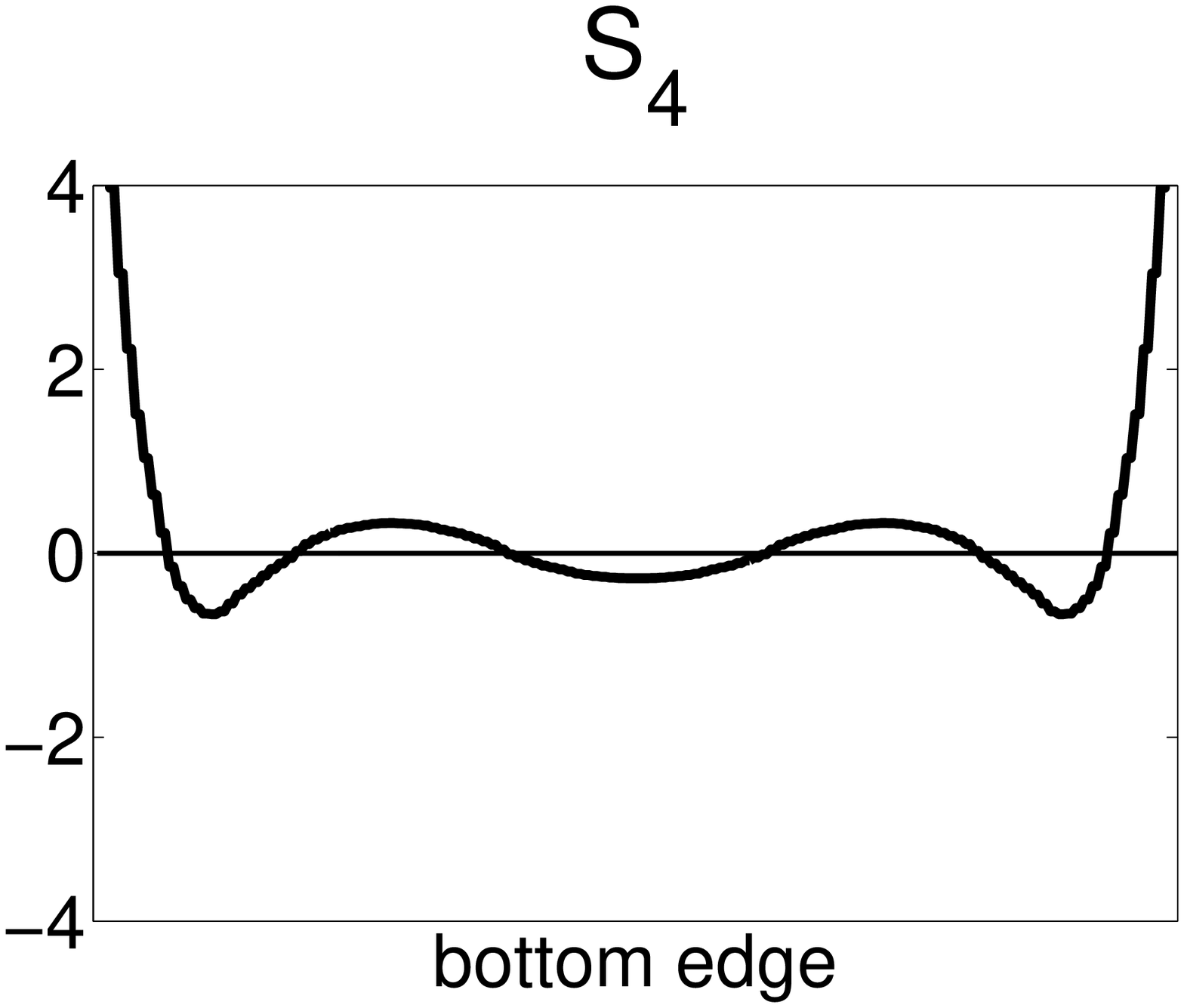}
\includegraphics[scale=.2]{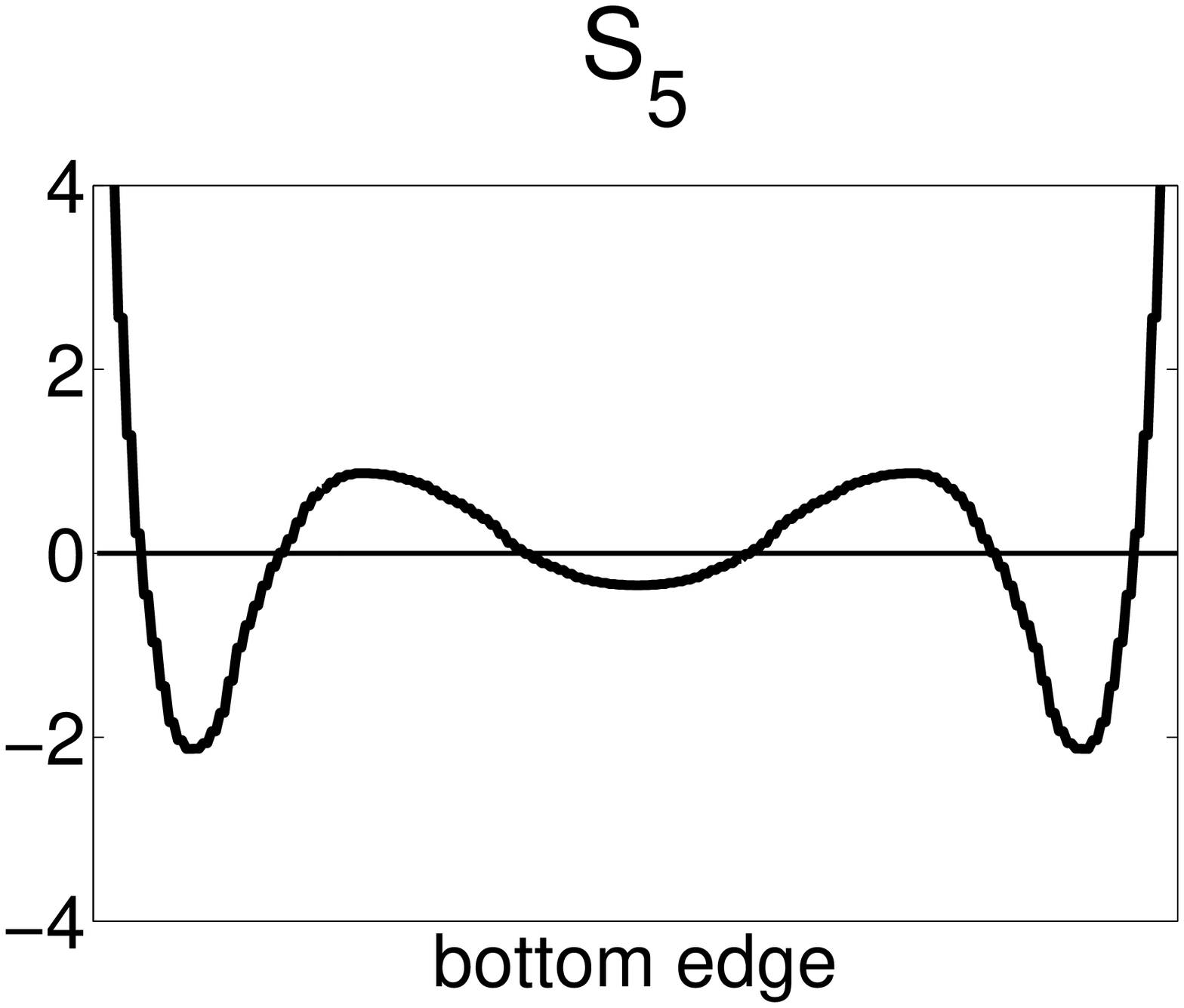}
\includegraphics[scale=.2]{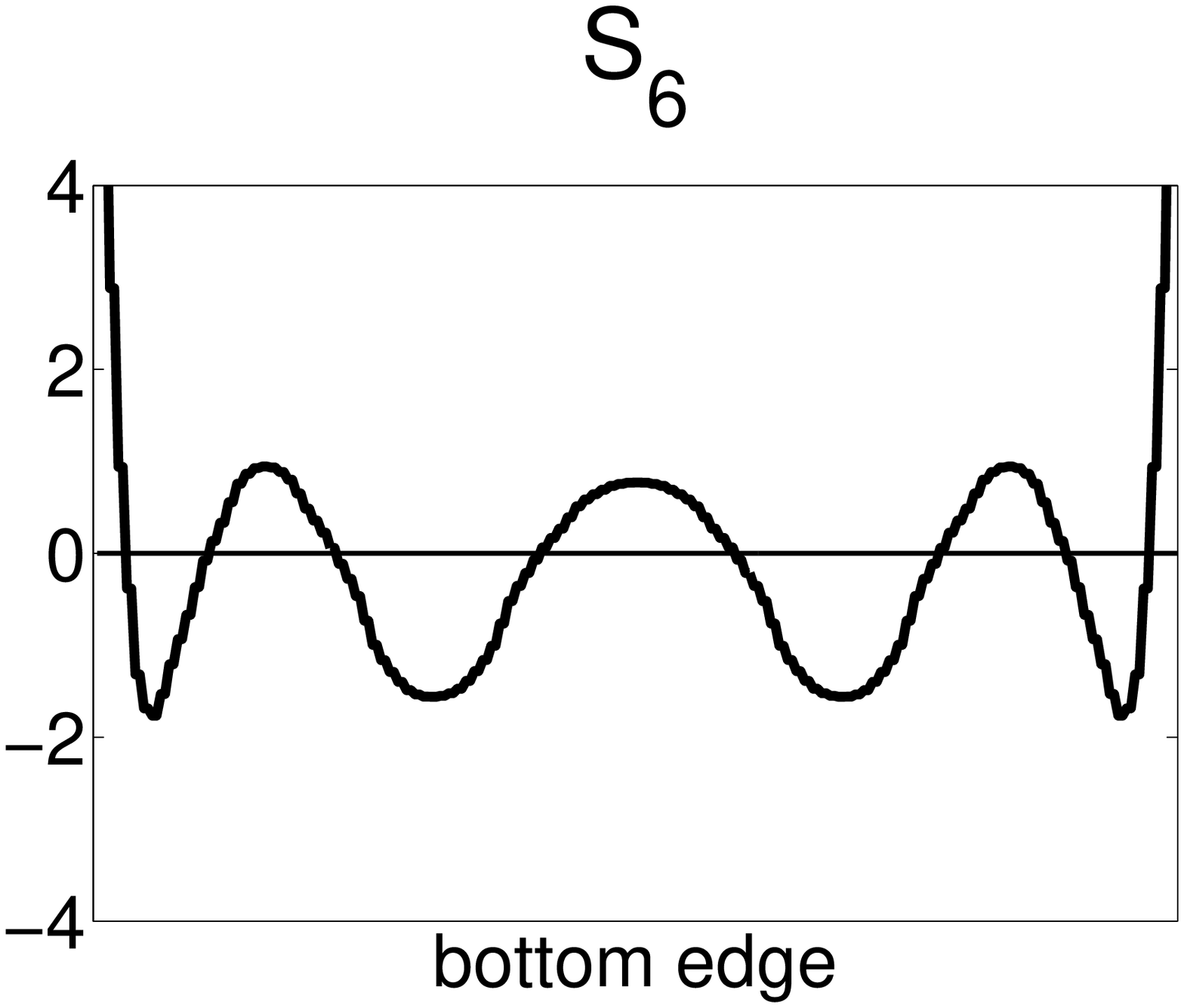}
\caption{Graph of $S_k$, $k=1, 4, 5, 6$ on the  bottom edge of $SG$}
\label{fig:Sbottomedges}
\end{center}
\end{figure}

\begin{figure}[hbp]
\begin{center}
\includegraphics[scale=.2]{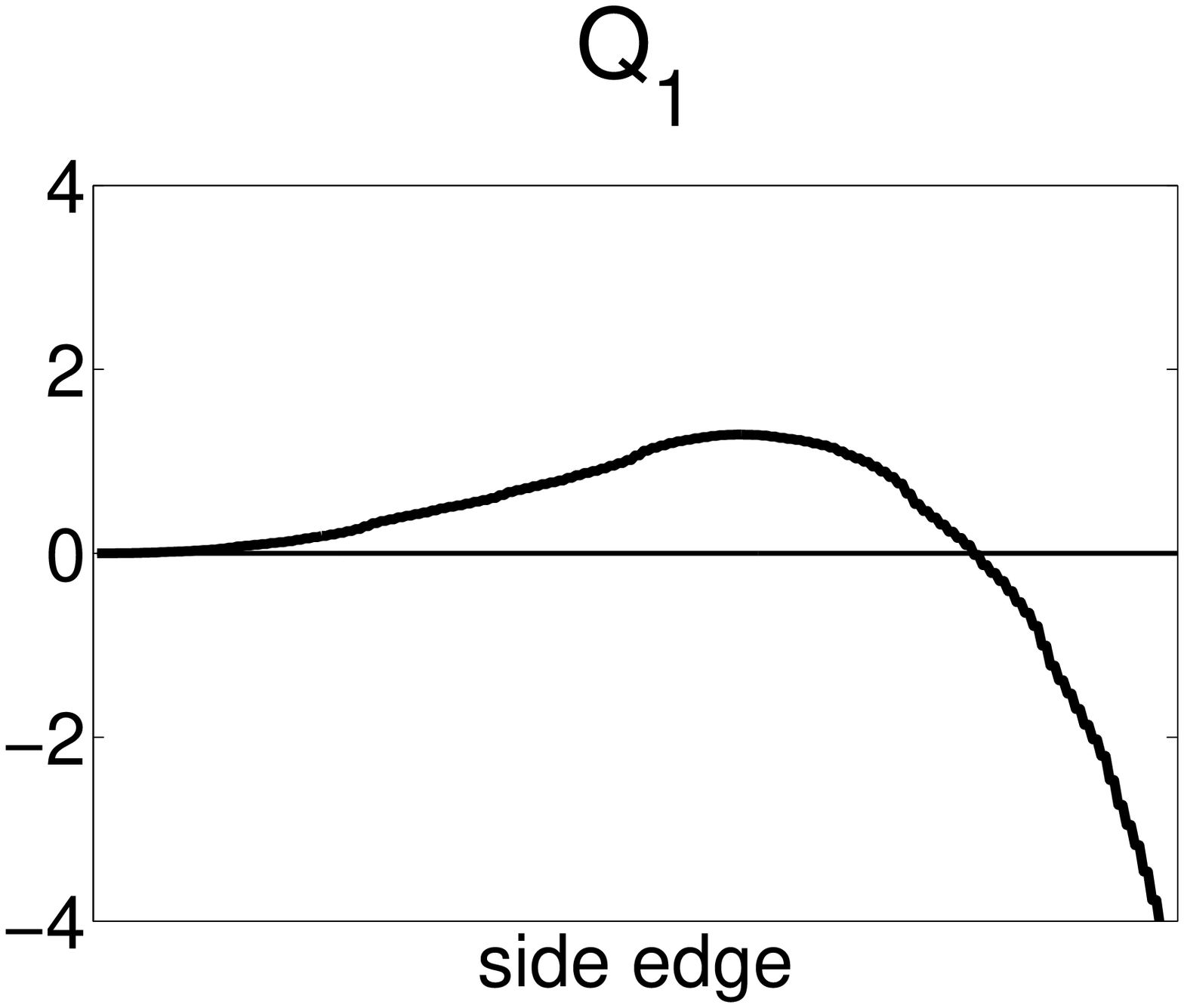}
\includegraphics[scale=.2]{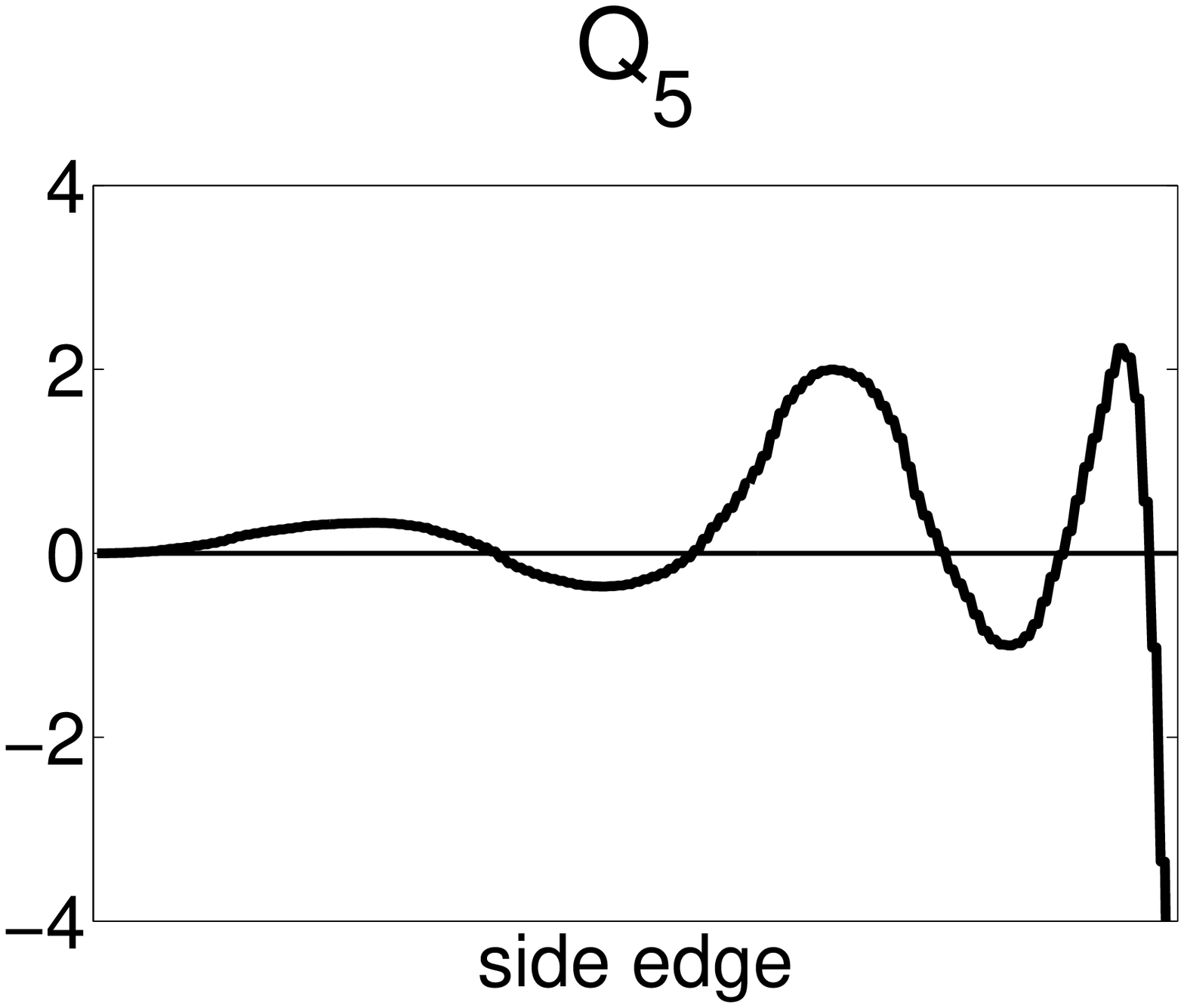}
\includegraphics[scale=.2]{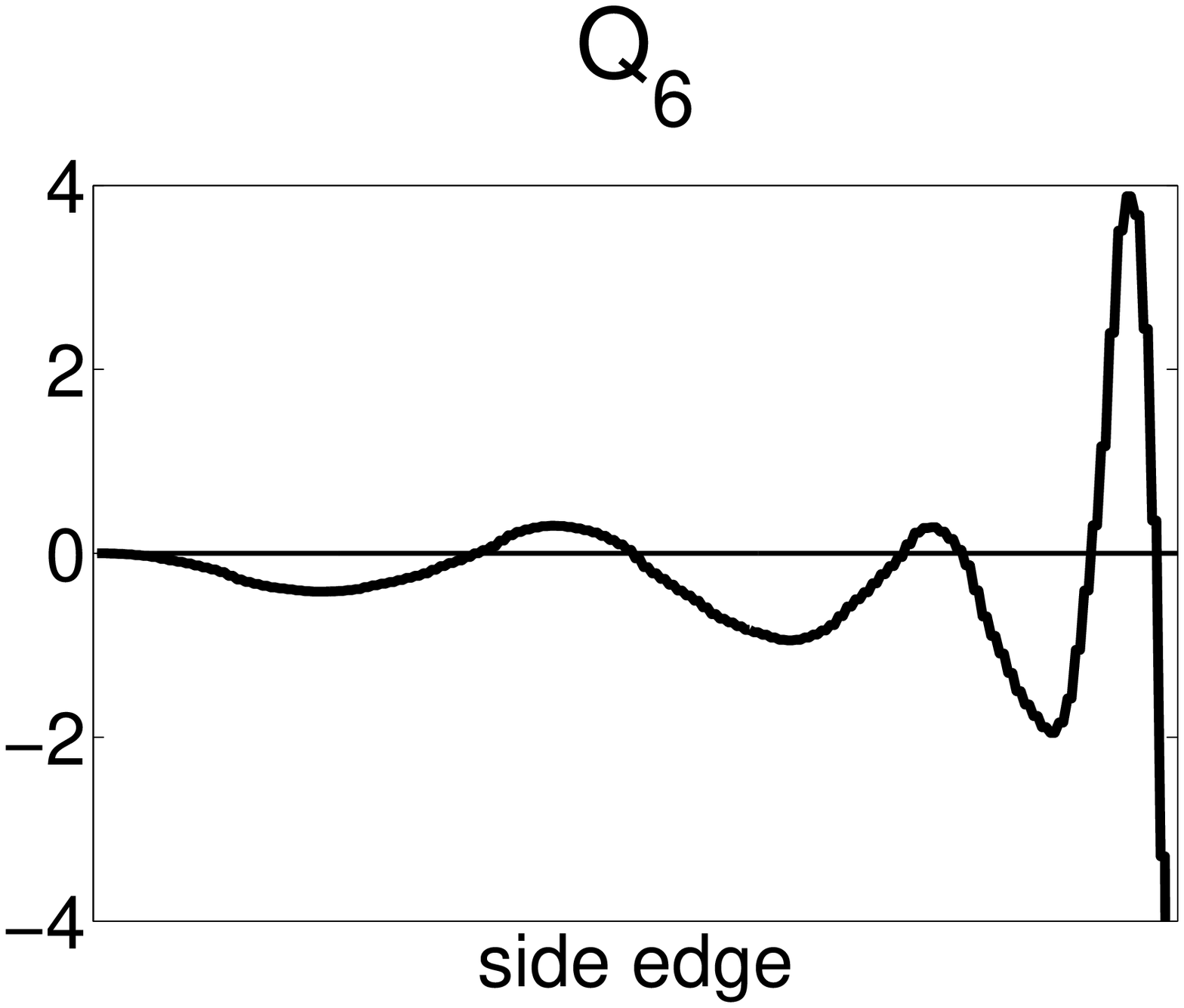}
\includegraphics[scale=.2]{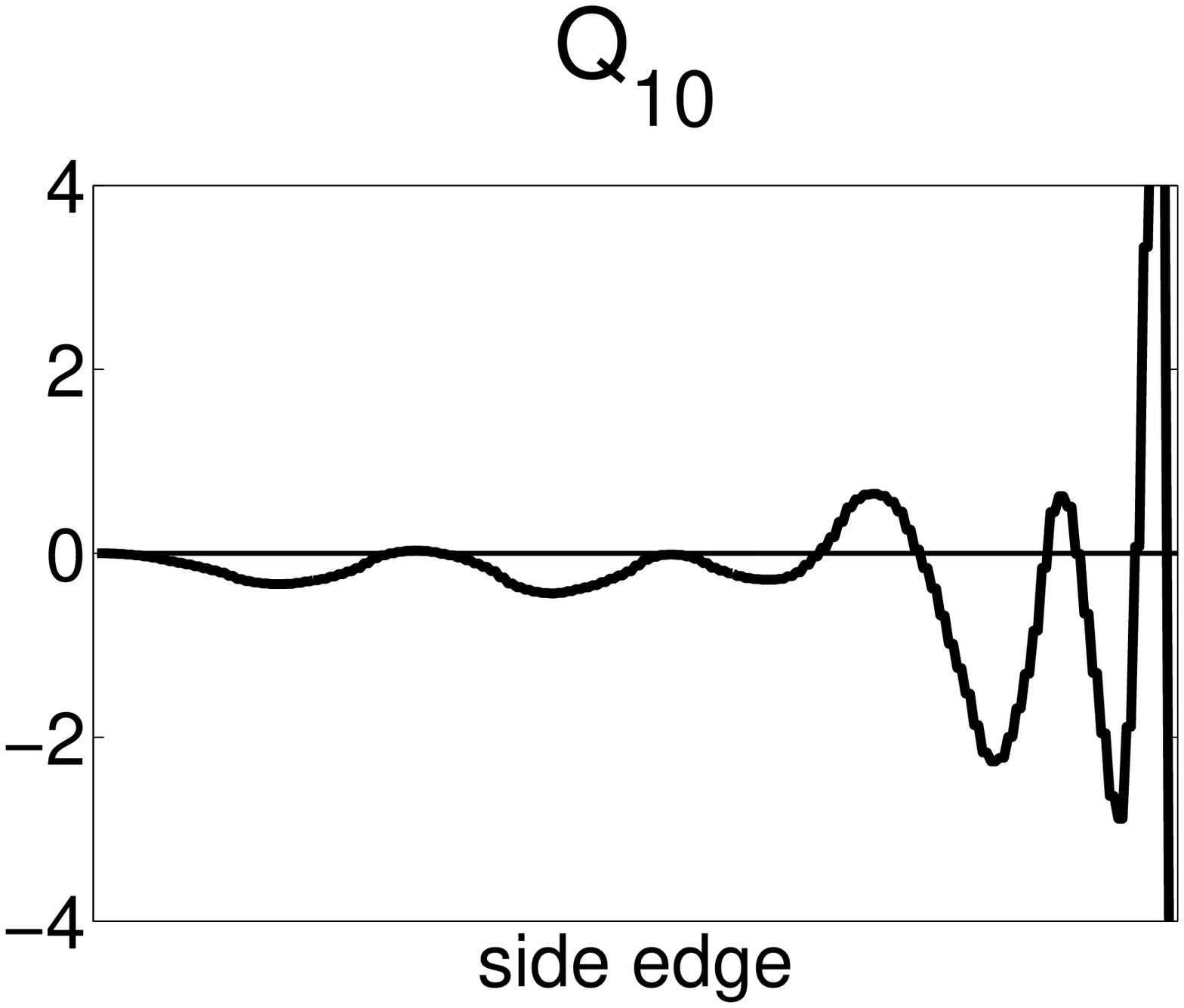}
\caption{Graph of $Q_k$, $k=1, 5, 6, 10$ on a side edge of $SG$}
\label{fig:edges}
\end{center}
\end{figure}

More generally, we have been able to numerically generate  the nodal domains of both the first $20$ antisymmetric and  first $20$ symmetric OP on $SG$.
For example, the nodal domains of a  polynomial $f$ on $SG$  are
defined as follows: Let  $$Z(f) = f^{-1}\{0\}=\{x \in SG: f (x)=0\}$$
be the zero set (or the nodal set) of $f$. Then, $SG\setminus Z(f)$
can be partitioned into finitely many connected domains $D_1, D_2,
\hdots D_{\nu_{k}}$, where $\nu_k$ depends on the degree of the
polynomial $f$. These domains $\{D_{\ell}\}_{\ell=1}^{\nu_k}$ are
\emph{the  nodal domains} of $f$. We refer to \cite{webpage} for color graphs that support the above bound on $\nu_k$.

\begin{rem}
As mentioned in the Introduction, we have generated many data for all the OP constructed here. These data as well as the codes used to generate them are available at \cite{webpage}. Certain data that are not included in the present article, deal with the dynamics of the OP at certain points of $SG$. That is, a choice of OP type and a fixed point $x\in SG$  yield a set of points $\{(p_{k}(x), p_{k+1}(x))\}$ in the plane. The passage from $k$ to $k+1$ is thought of as a dynamical system in the plane with some "attractor" toward which the points tend as $k \to \infty$. With our numerics, we had hoped to get a "snapshot" of this attractor. The computational complexity associated with our construction, limited us to to generate data for these dynamics only for $1\leq k \leq 200$. Thus, we could not make some conclusive statements about the long term behavior of these dynamics. However, we refer the interested reader to \cite{webpage} for graphs of some of these dynamics.  Our investigation in this context is  parallel to recent results for the dynamics of certain OP
related to self-similar measures on the unit interval \cite{hos10}.
\end{rem}

\section{Acknowledgment}  K.~A.~Okoudjou  was partially supported by the Alexander von Humboldt foundation, by ONR grants: N000140910324 $\&$ N000140910144, and by a RASA from the Graduate School of UMCP. He would also like to express its gratitude to the Institute for Mathematics at the 
University of Osnabr\"uck for its hospitality while part of this work was completed. R.~S.~Strichartz was supported in part  by the National Science Foundation, grant DMS-0652440. E.~K.~Tuley was  supported in part by the National Science Foundation through the Research Experiences for Undergraduates at Cornell, and by the National Science Foundation Graduate Research Fellowship, grant DGE-0707424.

\end{document}